\newtheorem{theorem}{Theorem}
\renewcommand\thetheorem{\Alph{theorem}}
\theoremstyle{plain}
\newtheorem{lemma}{Lemma}
\newtheorem{corollary}{Corollary}
\newtheorem{proposition}{Proposition}
\newtheorem{axiom}{Axiom}
\theoremstyle{remark}
\newtheorem{remark}{Remark}
\newtheorem*{definition}{Definition}
\newcommand{\C}{\ensuremath{\mathbb{C}}}
\newcommand{\Z}{\ensuremath{\mathbb{Z}}}
\newcommand{\R}{\ensuremath{\mathbb{R}}}
\newcommand{\A}{\ensuremath{\mathcal{A}}}
\newcommand{\T}{\ensuremath{\mathbb{T}}}
\newcommand{\N}{\ensuremath{\mathbb{N}}}
\newcommand{\Hi}{\ensuremath{\mathcal{H}}}
\newcommand{\At}{\A (\T_\theta^n)}
\newcommand{\Atc}{A (\T_\theta^n)}
\newcommand{\Att}{A (\T_\theta^2)}
\newcommand{\vmu}{\boldsymbol{\mu}}
\newcommand{\vnu}{\boldsymbol{\nu}}
\newcommand{\vx}{\mathbf{x}}
\newcommand{\vy}{\mathbf{y}}
\newcommand{\vk}{\mathbf{k}}
\newcommand{\vo}{\mathbf{0}}
\newcommand{\vtau}{\boldsymbol{\tau}}
\newcommand{\vphi}{\boldsymbol{\phi}}
\newcommand{\veps}{\boldsymbol{\epsilon}}
\newcommand{\vm}{\mathbf{m}}
\newcommand{\vp}{\mathbf{p}}
\newcommand{\va}{\mathbf{a}}
\newcommand{\vepst}{\boldsymbol{\tilde{\epsilon}}}
\newcommand{\vdelta}{\boldsymbol{\delta}}
\newcommand{\Id}{\text{Id}}
\newcommand{\cd}{\cdot}
\newcommand{\pmJJ}{\epsilon_J}
\newcommand{\pmJD}{\epsilon_D}
\newcommand{\pmJG}{\epsilon_{\Gamma}}
\newcommand{\half}{\frac{1}{2}}
\newcommand{\am}{\mathbf{A}}
\newcommand{\bm}{\mathbf{B}}
\def\url@leostyle{%
  \@ifundefined{selectfont}{\def\UrlFont{\sf}}{\def\UrlFont{\small\ttfamily}}}
\def\Xint#1{\mathchoice%
  {\XXint\displaystyle\textstyle{#1}}%
  {\XXint\textstyle\scriptstyle{#1}}%
  {\XXint\scriptstyle\scriptscriptstyle{#1}}%
  {\XXint\scriptscriptstyle\scriptscriptstyle{#1}}%
  \!\int}
\def\XXint#1#2#3{{\setbox0=\hbox{$#1{#2#3}{\int}$}
     \vcenter{\hbox{$#2#3$}}\kern-.5\wd0}}
\def\dashint{\Xint-}
\let\l\left
\let\r\right
\title[Classification of spin structures on the noncommutative \(n\)-torus]{Classification of spin structures on the noncommutative \(n\)-torus}
\author{Jan Jitse Venselaar}
\date{\today}
\address{Mathematical Institute\\
         Utrecht University\\
	 PO Box 80010, 3508 TA Utrecht\\
	 The Netherlands}
\email{J.J.Venselaar1@uu.nl}
\keywords{spectral geometry, noncommutative geometry, real spectral triples}
\subjclass[2010]{58B34; 46L87}
\thanks{The author thanks Gunther Cornelissen for discussions and encouragement, and the referee for pertinent remarks}
\begin{document}

\begin{abstract}
 We classify spin structures on the noncommutative torus, and find that the noncommutative \(n\)-torus has \(2^n\) spin structures, corresponding to isospectral deformations of spin structures on the commutative \(n\)-torus. For \(n\geq 4\) the classification depends on Connes' spin manifold theorem. In addition, we study unitary equivalences of these spin structures.
\end{abstract}
\maketitle

\section{Introduction}
The different possible spin structures on a differentiable manifold were classified in the work of Milnor~\cite{MR0157388}; for example, on a (commutative) \(n\)-torus, there exist \(2^n\) inequivalent spin structures. No such general classification of spin structures in currently know in noncommutative geometry --- this amounts to classifying the possible real spectral triple structures on a $C^*$-algebra. 
In this paper we prove that there exist precisely \(2^n\) different real spectral triples on a noncommutative \(n\)-torus, and that these structures are isospectral deformations of spin structures on the commutative \(n\)-torus.
 
The noncommutative torus \(\Atc\), or irrational rotation algebra, is one of the first nontrivial examples of a noncommutative topological manifold, given as a deformation of the usual commutative torus~\cite{MR0227310}~\cite{MR1047281}~\cite{MR781813}. The parameter \(\theta\) is a number for a noncommutative \(2\)-torus, and an antisymmetric \(n\times n\) matrix for higher dimensional tori.

The analog of putting a spin structure and a metric on this algebra is to enhance it into a real spectral triple in the sense of Connes~\cite{connes_noncommutative_1995}~\cite{connes_gravity_1996}. This introduces a set of extra parameters \(\vtau^i\), which are the analogue of the size of the torus. The noncommutative \(n\)-torus, both topologically and with spin structure, has found many applications in mathematical physics, for example \cite{bellissard:5373}, \cite{MR1721895} and \cite{1126-6708-2007-02-033}.
A noncommutative spin structure can certainly be constructed by deforming a spin structure on the commutative \(n\)-torus \cite{MR2551887}, so the question becomes whether this deformation gives all possible spin structures on the noncommutative \(n\)-torus.

In dimension \(2\), the problem was solved by Paschke and Sitarz~\cite{paschke_spin_2006}*{Theorem 2.5}, who showed that a noncommutative \(2\)-torus admits exactly \(4\) different real spectral triples (which are deformations of spin structures on the commutative torus). This result can be reformulated as follows: any real spectral triple which is equivariant with respect to a \(2\)-torus action in the sense of \cite{sitarz_equivariant_2003} (see section~\ref{subsec:equivariant}), is an isospectral deformation of a spin structure on a commutative \(2\)-torus. Note that an equivariant action of \(n\)-torus is different from an \(n\)-torus action as in \cite{MR1937657}, the former is a condition on the spectral triple, the latter is an action along which the algebra is deformed.

Our first result is that the theorem of Paschke and Sitarz holds true in arbitrary dimension:
\begin{theorem}\label{thm:main theorem}
All irreducible real spectral triples with an equivariant \(n\)-torus actions are isospectral deformations of spin structures on an \(n\)-torus.
\end{theorem}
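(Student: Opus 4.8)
The plan is to run the argument of Paschke and Sitarz \cite{paschke_spin_2006} in arbitrary dimension, organising it so that the input from the representation theory of Clifford algebras is concentrated in a single place. Write $\Hi$ for the Hilbert space of the triple, $\gamma$ for the grading (when $n$ is even), $J$ for the real structure, and let $\{V_t\}_{t\in\T^n}$ be the unitary implementing the equivariant action \cite{sitarz_equivariant_2003}. Decomposing $\Hi=\bigoplus_{\vk}\Hi_{\vk}$ into weight spaces, with the weights $\vk$ a priori in $\R^n$, equivariance of the generators $U_j$ of $\Atc$ forces each $U_j$ to restrict to a unitary isomorphism $\Hi_{\vk}\to\Hi_{\vk+e_j}$; hence the set of occurring weights is a single coset $\vk_0+\Z^n$, and fixing $W:=\Hi_{\vk_0}$ identifies $\Hi\cong W\otimes\ell^2(\Z^n)$ with the $U_j$ acting as the canonical $\theta$-twisted shifts on the second factor. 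One has to check first that $W$ is finite-dimensional: since $D$ commutes with $V_t$ it preserves every $\Hi_{\vk}$, and an $n$-summable spectral triple cannot carry an infinite-dimensional weight space.

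Next I would pin down $D$. Because $D$ preserves each weight space, the operators $\gamma_j:=[D,U_j]U_j^{*}$ are bounded and $V_t$-invariant; applying the first-order condition $[[D,a],b^{\circ}]=0$ to $a=U_j,U_j^{*}$ and $b=U_k,U_k^{*}$ shows that the $\gamma_j$ commute with $\Atc$ and with its opposite, hence act as fixed matrices on $W$, and also that $A:=D-\sum_j\gamma_j\delta_j$ (with $\delta_j$ the canonical derivations dual to the $U_j$) is a bounded operator of the same type, i.e.\ a fixed self-adjoint matrix $A_0$ on $W$. Self-adjointness of $D$ constrains the $\gamma_j$, and evaluating $\pi_D$ on the canonical Hochschild $n$-cycle built from $U_1,\dots,U_n$ --- the orientation axiom --- forces $\sum_{\sigma}\operatorname{sgn}(\sigma)\,\gamma_{\sigma(1)}\cdots\gamma_{\sigma(n)}$ to equal $\gamma$ (or $\Id$, for $n$ odd), which is possible only when the $\gamma_j$ satisfy Clifford relations $\gamma_i\gamma_j+\gamma_j\gamma_i=2g_{ij}$ for a positive-definite $g$. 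Absorbing $g$ by a linear change of basis of $\R^n$ --- this is precisely the continuous ``size'' modulus encoded by the $\vtau^i$ --- turns $D$ into the flat Dirac operator $\sum_j c_j\delta_j+A_0$ with $c_j$ standard Clifford generators.

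Now irreducibility is used twice. First, the $c_j$ (together with $\gamma$ when $n$ is even) generate a copy of the complexified Clifford algebra $\mathrm{Cl}_n$ acting on $W$, and irreducibility of the triple forces $W$ to be the irreducible Clifford module, namely the spinor space $\C^{2^{\lfloor n/2\rfloor}}$, with $\gamma$ the standard chirality up to a sign. Second, $A_0=A_0^{*}$ together with its compatibility with $\gamma$ (for $n$ even) and with the real structure leaves only the degree-one summand $A_0=\sum_j\phi_j c_j$, $\phi_j\in\R$ --- a flat $U(1)$-connection --- which merely translates the weight coset, $\vk_0\mapsto\vk_0+\phi/2\pi$. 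Finally, antilinearity and equivariance force $J$ to send $\Hi_{\vk}$ to $\Hi_{-\vk}$; for such a $J$ to exist the weight coset must be symmetric, i.e.\ $2\vk_0\in\Z^n$, and there are exactly $2^{n}$ choices $\vk_0\in\tfrac{1}{2}\Z^n/\Z^n\cong(\Z/2)^n$, each reproducing direction by direction the periodic/antiperiodic boundary conditions of one of the $2^n$ classical spin structures on $\T^n$. The reality axioms $J^2=\pm1$, $JD=\pm DJ$, $J\gamma=\pm\gamma J$, and $JaJ^{-1}=a^{\circ}$ then pin $J$ on $W$ down to a sign times the charge conjugation of $\mathrm{Cl}_n$, so no freedom is left and the resulting triple is manifestly the isospectral deformation of the commutative spin structure labelled by $\vk_0$ with flat metric $g$.

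The part I expect to be the real obstacle is extracting the Clifford relations from the orientation axiom and then identifying $W$ with the spinor module: in dimension $2$ the $\gamma_j$ are simply the Pauli matrices and everything is explicit, whereas for general $n$ one must genuinely invoke the structure and representation theory of $\mathrm{Cl}_n$, with its $\Z/2$ dichotomy between even and odd $n$, both here and in trimming the bounded perturbation $A_0$. The purely analytic point that the weight spaces are finite-dimensional --- which equivariance alone does not provide --- also needs care.
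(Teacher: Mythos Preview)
Your outline tracks the paper's argument closely in its broad strokes, but it hand-waves precisely the step that the paper isolates as the crux and for which the authors say they know no elementary proof. You write that the orientation axiom forces
\[
\sum_{\sigma\in S_n}\operatorname{sgn}(\sigma)\,\gamma_{\sigma(1)}\cdots\gamma_{\sigma(n)}\in\{\Gamma,\Id\},
\]
and that this ``is possible only when the $\gamma_j$ satisfy Clifford relations $\gamma_i\gamma_j+\gamma_j\gamma_i=2g_{ij}$ for a positive-definite $g$''. This implication is not a routine piece of Clifford representation theory: it is exactly the content of Corollary~\ref{cor:matrix theorem}, which the paper obtains only \emph{a posteriori}, by setting $\theta=0$ and invoking Connes' spin manifold reconstruction theorem (Lemma~\ref{lem:clifford action}). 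The paper even exhibits, in the Remark following Lemma~\ref{lem:invertible matrices}, deformations of a Clifford system that keep the invertibility property but destroy the Hochschild identity for $n\geq 4$, to emphasise that these conditions interact nontrivially. So your ``real obstacle'' is not merely bookkeeping with $\mathrm{Cl}_n$; you need either the reconstruction theorem or a direct proof of Corollary~\ref{cor:matrix theorem}, and you have supplied neither.

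Two smaller points. First, your ordering ``first-order condition, then $J$'' hides a genuine branching that the paper has to rule out: before $J$ is pinned down, the opposite action $U_{\vx}^o$ carries an undetermined unitary $\Lambda'(\vx)\Lambda(0)^\dagger$, and the first-order recursion (Lemma~\ref{lem:first order Dirac operator}) then admits non-affine solutions for $D$ whenever $(\Lambda'(\vx)\Lambda(0)^\dagger)^2\neq\Id$. These spurious candidates are eliminated only by the compact-resolvent condition (Lemma~\ref{lem:unique structure}); your claim that the $\gamma_j$ are automatically fixed matrices on $W$ skips this. Second, your assertion that compatibility with $J$ and $\Gamma$ forces the bounded remainder to be degree one, $A_0=\sum_j\phi_j c_j$, is false already for $n=3$: Proposition~\ref{prop:uniqueness in low dimensions} shows $C=q\,\Id$ with $q\in\R$ survives, and for $n=4$ the admissible $C$ fill a four-real-parameter family that is not of your claimed form. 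The paper's Theorem~\ref{thm:elaborate main theorem} accordingly keeps a general $C$ with $JCJ^{-1}=\pmJD C$ and $\Gamma C=-C\Gamma$ rather than reducing it to a lattice shift.
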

The proof of \cite{paschke_spin_2006}*{Theorem 2.5} does not generalize readily to higher dimensions. Rather than working with the grading operator, which only gives nontrivial conditions in the even-dimensional case, we use the reality operator first. Then we establish that out of several possible candidate structures only one satisfies the growth condition and compact resolvent condition on the Dirac operator. Also, our proof uses at a crucial point Connes' reconstruction theorem~\cite{connes_spectral_2008}*{Theorem 11.5}. We describe the spin structures explicitly in Theorem~\ref{thm:elaborate main theorem}.

In a celebrated paper \cite{MR0139178} (see also \cite{MR0179183}*{Theorem 1}), Adams used the classification of independent vector fields on spheres to deduce elementary results on Radon-Hurwitz numbers of certain classes of matrices. Similarly, our Theorem~\ref{thm:main theorem} can be used to prove the following elementary result on Hermitian matrices, for which we do not know an elementary proof:
\begin{corollary}\label{cor:matrix theorem}
A set of $2^b\times 2^b$ Hermitian matrices ${\{A_i\}}_{i=1}^n$, where $n=2b +1$, such that the equation
\[\det \l(\sum_i x_i A_i\r) = 0,\]
only has the zero solution \({(x_i =0)}_{i=1}^n\) in $\R^n\), generate a Clifford algebra if and only if
\[ \sum_{\sigma \in S_n} \textup{sign}(\sigma) \prod_{i}^n A_{\sigma(i)} = \lambda \Id_k,\]
for some nonzero $\lambda \in \R$.
\end{corollary}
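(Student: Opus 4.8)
The plan is to realise the data $\{A_i\}$ as the spinor part of the Dirac operator of an irreducible equivariant real spectral triple over a noncommutative $n$-torus and then to invoke Theorem~\ref{thm:main theorem}. Write $k=2^b$, fix a noncommutativity parameter $\theta$, and let $\At$ act by left multiplication on the GNS space $L^2(\T_\theta^n)$ of its canonical trace. On $\Hi=\C^k\otimes L^2(\T_\theta^n)$ let $\At$ act as $1\otimes(\cdot)$, let the $n$-torus act by $1\otimes\alpha_t$, and set
\[D=\sum_{i=1}^{n}A_i\otimes\partial_i,\]
where $\partial_i$ is the self-adjoint generator of the $i$-th circle action (acting as $m_i$ on the Weyl vector $U_{\vm}$). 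Since the $A_i$ are Hermitian, $D$ is self-adjoint and commutes with the torus action, and on the weight-$\vm$ subspace $\C^k\otimes\C U_{\vm}$ it acts as the matrix $\sum_i m_iA_i$. Finally equip $\Hi$ with the real structure $J=J_0\otimes C_0$, where $J_0(a)=a^{*}$ is the canonical antiunitary of the trace and $C_0$ is the charge-conjugation matrix of the commutative spin geometry of the same $KO$-dimension.

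The point is that the two hypotheses on $\{A_i\}$ are exactly what is needed to make $(\At,\Hi,D,J)$ an irreducible equivariant real spectral triple. The commutator $[D,1\otimes a]=\sum_i A_i\otimes\delta_i(a)$ is bounded for smooth $a$, and the order-zero and order-one conditions hold because left and right multiplication on $L^2(\T_\theta^n)$ commute. The determinant hypothesis yields compact resolvent: by homogeneity and continuity of $\vx\mapsto\sigma_{\min}(\sum_i x_iA_i)$ on the unit sphere it is equivalent to a bound $\sigma_{\min}(\sum_i x_iA_i)\geq c\,|\vx|$, so the eigenvalues of $D$ on the block labelled by $\vm$ grow linearly in $|\vm|$ and $D$ has the spectral growth of an $n$-dimensional spectral triple. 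The volume hypothesis is the orientability axiom: applying $\pi_D$ to the antisymmetrised Hochschild $n$-cycle $\sum_{\sigma\in S_n}\textup{sign}(\sigma)\,1\otimes U_{\mathbf e_{\sigma(1)}}\otimes\cdots\otimes U_{\mathbf e_{\sigma(n)}}$ produces a nonzero multiple of $\sum_\sigma\textup{sign}(\sigma)\prod_i A_{\sigma(i)}=\lambda\,\Id_k$, so a suitable rescaling of this cycle maps under $\pi_D$ to $1$; the same identity forces $\{A_i\}$ to generate all of $M_k(\C)$ (hence the triple to be irreducible) and is what makes $J$ compatible with $D$ in the $KO$-dimension-$n$ sense.

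With the axioms in hand, Theorem~\ref{thm:main theorem} applies and $(\At,\Hi,D,J)$ is an isospectral deformation of a spin structure on an $n$-torus; by the explicit description of those structures (Theorem~\ref{thm:elaborate main theorem}) its Dirac operator equals $\sum_i\gamma^i\otimes\partial_i$ for a genuine family of gamma matrices $\{\gamma^i\}$, i.e.\ ones obeying the Clifford relations of the flat metric. Comparing the two expressions for $D$ block by block, $\sum_i m_iA_i=\sum_i m_i\,U\gamma^iU^{*}$ for every $\vm\in\Z^n$ and a fixed unitary $U$, whence $A_i=U\gamma^iU^{*}$ and the $A_i$ generate a Clifford algebra. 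The converse is elementary: if the $A_i$ satisfy Clifford relations, Hermiticity together with the determinant hypothesis forces the underlying bilinear form to be positive definite, and since $n=2b+1$ the representation must be the irreducible $2^b$-dimensional one; diagonalising the form orthogonally, $\sum_\sigma\textup{sign}(\sigma)\prod_i A_{\sigma(i)}$ becomes a nonzero scalar multiple of the Clifford volume element, which is central for odd $n$, so it equals $\lambda\,\Id_k$ with $\lambda\neq 0$ (real, by Hermiticity of the $A_i$).

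The main obstacle is the middle step: turning the purely algebraic volume identity into the structural input that Theorem~\ref{thm:main theorem} requires --- that $\{A_i\}$ generate $M_{2^b}(\C)$ (irreducibility; a proper invariant subspace would produce $n=2b+1$ Hermitian matrices of size below $2^b$ still meeting the determinant condition, which must be excluded), and that a compatible reality operator $C_0$ of the right $KO$-type exists --- together with, on the output side, reading off from the classification that the matrix part of the surviving Dirac operator is unitarily conjugate to the original $A_i$. The remaining checks (bounded commutators, compact resolvent, summability, equivariance, and the order conditions) are routine once $\Hi$ and $D$ are arranged as above.
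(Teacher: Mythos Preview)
Your strategy---realise the $A_i$ as the matrix part of a torus Dirac operator and then invoke a structure theorem---is the paper's own, but you invoke the wrong theorem and this is what creates the gap you yourself flag. The paper does not go through Theorem~\ref{thm:main theorem}; it sets $\theta=0$ and appeals to Connes' reconstruction theorem directly (this is Lemma~\ref{lem:clifford action}), recording the Corollary immediately afterwards. The relevant piece of reconstruction---that the principal symbol of $D$ furnishes a Clifford action on the fibre---uses only orientability, regularity and the dimension/summability axiom; it does not need the reality operator. Hence one only has to exhibit a (non-real) commutative spectral triple $(\mathcal{A}(\T_0^n),\Hi,D)$, and for that the determinant hypothesis (compact resolvent and $n$-summability, exactly as in Lemma~\ref{lem:invertible matrices}) together with the antisymmetrised-product hypothesis (orientability, exactly as in Lemma~\ref{lem:hochschild tau}) suffice.

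By routing through Theorem~\ref{thm:main theorem} you are forced to produce a \emph{real} spectral triple, and here the obstacle you identify is fatal rather than merely technical. A reality operator of the shape $C_0\otimes J_0$ must satisfy $C_0\,\overline{A_i}=\pm A_i\,C_0$ with the sign fixed by the $KO$-dimension; the existence of such a $C_0$ is precisely the existence of a charge conjugation intertwining the $A_i$, which already presupposes the Clifford structure you are trying to prove. Nothing in the two matrix hypotheses manufactures $C_0$ for you, so the argument is circular at that point. (Note also that the proof of Theorem~\ref{thm:main theorem} in the paper passes \emph{through} Lemma~\ref{lem:clifford action}, so even formally you are taking a detour back to the same place.) Two smaller remarks: irreducibility is not a genuine obstacle---a common invariant subspace for Hermitian matrices has an invariant orthogonal complement, and restricting to either block would give $2b+1$ Hermitian matrices of size strictly below $2^b$ with the same determinant property, contradicting the Adams--Lax--Phillips bound you cite; and in your final comparison of Dirac operators you must allow the general $\vtau^j$ and the bounded perturbation $C$ of Theorem~\ref{thm:elaborate main theorem}, so the correct conclusion is $A_i=\sum_j\tau^j_i\,U\gamma_jU^{*}$ modulo a constant, which still yields Clifford relations but through a $GL(n,\R)$ change of frame rather than a bare unitary conjugation.
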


After we have obtained a classification, we study equivalences between different real spectral triples on the same noncommutative \(n\)-torus. For a commutative \(n\)-torus, the diffeomorphisms of a torus act affine on the set of spin structures identified with the vector space \(\mathbb{Z}_2^n\), as shown in \cite{MR860317}. In particular, for the commutative $2$-torus, there are two orbits, one consisting of one element, and the other consisting of three elements. In the case of the noncommutative torus the full diffeomorphism group is not known when \(n>2\). Restricting to inner automorphisms of the algebra, we can show the the following.

\begin{theorem}\label{thm:unitary inequivalence}
Except for a set of \(\theta\) of measure \(0\), the different spin structures of the smooth noncommutative \(n\)-torus \(\At\) cannot be unitarily equivalent by an inner automorphism of the algebra.
\end{theorem}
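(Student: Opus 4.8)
The plan is to combine the explicit description of the spin structures from Theorem~\ref{thm:elaborate main theorem} with a Fourier analysis of the twisting cocycle of \(\At\). By that theorem we may present the \(2^{n}\) real spectral triples on a fixed \(\At\) as \(T_{\vdelta}=(\At,\Hi,D_{\vdelta},J,\gamma)\) with \(\vdelta\in\{0,\tfrac{1}{2}\}^{n}\), where \(\Hi=L^{2}(\At,\tau)\otimes\C^{N}\) carries the left regular representation \(\pi\), the operators \(J\) and \(\gamma\) (or at worst \(J_{\vdelta}\) and \(\gamma\)) are essentially independent of \(\vdelta\), and the Dirac operators differ only by a bounded \emph{constant} Clifford term, \(D_{\vdelta'}-D_{\vdelta}=2\pi\mathrm{i}\sum_{j}(\delta'_{j}-\delta_{j})\,\Gamma^{j}\), the \(\Gamma^{j}\) being fixed matrices determined by \(\vtau\). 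In particular \(D_{\vdelta'}-D_{\vdelta}\neq 0\) whenever \(\vdelta\neq\vdelta'\).

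First I would constrain a hypothetical implementing unitary. If a unitary \(U\) on \(\Hi\) witnesses an equivalence \(T_{\vdelta}\simeq T_{\vdelta'}\) along an inner automorphism \(\mathrm{Ad}(u)\) with \(u\in\At\) unitary, then \(U\pi(a)U^{*}=\pi(u)\pi(a)\pi(u)^{*}\) for all \(a\), so \(\pi(u)^{*}U\) lies in the commutant \(\pi(\At)'=\rho(\At)''\,\bar\otimes\,M_{N}(\C)\), where \(\rho\) is the right regular representation; hence \(U=\pi(u)\,T\) with \(T\in\pi(\At)'\) unitary. Requiring that \(U\) commute with \(\gamma\) and with \(J\) forces the \(M_{N}(\C)\)-part of \(T\) to commute with the chirality and charge-conjugation matrices, and its \(\rho(\At)''\)-part to be a right-multiplication operator \(\rho(v)\) (the anti-linearity of \(J\) converts the left factor into a right one); and since \(U^{*}D_{\vdelta}U=D_{\vdelta'}\) must reproduce the metric encoded in \(\vtau\), for \(\vtau\) in general position the only admissible spinor rotation is the identity. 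Finally, boundedness of \([D_{\vdelta},U]\) (itself equal to \(U(D_{\vdelta'}-D_{\vdelta})\)) forces \(\rho(\partial_{j}v)\) to be bounded for every \(j\), so that \(v\) is once-differentiable over \(\At\) and \((\partial_{j}v)v^{*}\) makes sense as an element of the \(C^{*}\)-algebra. This reduces the problem to a purely algebraic identity.

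That identity is obtained by matching \(U^{*}D_{\vdelta}U=D_{\vdelta'}\) coefficientwise in the linearly independent \(\Gamma^{j}\): for each \(j\),
\[
\mathrm{mult}^{L}_{u^{*}\partial_{j}(u)}\;+\;\mathrm{mult}^{R}_{(\partial_{j}v)\,v^{*}}\;=\;2\pi\mathrm{i}\,(\delta'_{j}-\delta_{j})\,\Id
\qquad\text{on }L^{2}(\At,\tau),
\]
with \(\partial_{j}\) the canonical derivations. Writing \(u^{*}\partial_{j}(u)=\sum_{\ell}a^{(j)}_{\ell}U^{\ell}\), \((\partial_{j}v)v^{*}=\sum_{\ell}b^{(j)}_{\ell}U^{\ell}\) and testing both sides against the Weyl basis, the cocycle identity \(\omega(\ell,m)\,\omega(m,\ell)=1\) turns the \(\ell\neq 0\) part into \(a^{(j)}_{\ell}\,\omega(\ell,m)^{2}=-\,b^{(j)}_{\ell}\) for all \(m\in\Z^{n}\); since \(\omega(\ell,m)^{2}=e^{2\pi\mathrm{i}\langle\ell,\theta m\rangle}\), this is possible with \(a^{(j)}_{\ell}\neq 0\) only when \(\theta\ell\in\Z^{n}\). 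Hence, once \(\theta\) avoids the set
\[
\mathcal{E}=\bigl\{\theta\ \text{antisymmetric}\ :\ \theta\ell\in\Z^{n}\ \text{for some}\ \ell\in\Z^{n}\setminus\{0\}\bigr\},
\]
each \(u^{*}\partial_{j}(u)\) and \((\partial_{j}v)v^{*}\) is a scalar; so \(\partial_{j}(u)\) is a scalar multiple of \(u\) for every \(j\), and the only unitaries of \(\At\) with that property are scalar multiples of monomials \(U^{m_{0}}\) (similarly \(v\)). Conjugation by a monomial shifts \(\vdelta\) by an integer vector, which is trivial modulo \(\Z^{n}\); therefore \(\vdelta'\equiv\vdelta\) in \(\{0,\tfrac{1}{2}\}^{n}\), contradicting \(\vdelta\neq\vdelta'\). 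Finally \(\mathcal{E}\) is a countable union of proper affine subspaces of the \(\binom{n}{2}\)-dimensional space of antisymmetric matrices, hence Lebesgue-null (for \(n=2\) it is exactly \(\mathbb{Q}\)); this is the source of the ``measure zero'' exception.

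The step I expect to be the main obstacle is the second one: rigorously pinning down \(U\), and in particular excluding that its spinor part, or a unitary \(u\) in the \(C^{*}\)-completion rather than in the smooth algebra \(\At\), conspires to realise a genuine half-integer shift of \(\vdelta\) for special metrics (or, for the round metric, for special \(\theta\)). This uses the reality and first-order axioms together with the compact-resolvent and growth conditions, and draws on the same reconstruction input invoked for Theorem~\ref{thm:main theorem}; identifying \(\mathcal{E}\) sharply, so that ``measure zero'' is the best one can claim by this route, is a secondary nuisance.
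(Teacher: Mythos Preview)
Your route differs substantially from the paper's. The paper does not try to decompose the intertwiner or to match Dirac operators; it first invokes $C^{*}$-structure theory---Boca's result that for $\theta$ outside a null Diophantine set $\Atc$ is an AT-algebra, together with the Elliott--R{\o}rdam theorem that then $\overline{\mathrm{Inn}}(\Atc)=\overline{\mathrm{Inn}}_{0}(\Atc)$---to reduce to asking whether the \emph{identity} automorphism can permute spin structures. It then expands a general intertwiner in the equivariant basis, $We_{\vo,i}=\sum_{\vk,j}w_{\vk,ij}\,e_{\vk,j}$, and computes the $J$-intertwining relation $WJ_{\veps}=J_{\vepst}W$ (not the $D$-relation) directly on basis vectors. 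Iterating the resulting identity twice produces a phase $e\bigl(2\sigma(\vm)\cdot\theta(\vk+\sigma(\veps)-\vepst)\bigr)$ that must equal $1$ for all $\vm$; under the hypothesis that the upper-triangular entries of $\theta$ are independent over $\Z$ this forces $\vk=\vepst-\sigma(\veps)\in\Z^{n}$, hence $\vepst\equiv\veps$. Your set $\mathcal{E}$ and the paper's $\Z$-independence hypothesis play analogous roles at this last step, but the paper's measure-zero exception is actually dominated by Boca's Diophantine set, which is where the structure-theory reduction lives.

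Your Fourier argument on $u^{*}\partial_{j}u$ and $(\partial_{j}v)v^{*}$ is attractive precisely because it would bypass the AT/approximate-innerness machinery and give a sharper exceptional set. But the step you flag as the obstacle is a genuine gap, and as written it breaks the argument: a unitary $T\in\pi(\At)'\cong M_{N}\bigl(\rho(\Atc)''\bigr)$ has no reason to split as $\rho(v)\otimes S$, and compatibility with $J$ and $\Gamma$ only imposes matrix identities over the right von~Neumann algebra, not a tensor factorisation (nor does ``$\vtau$ in general position'' force $S=\Id$ without further work). Without that splitting you cannot pass $T$ across $D_{\vdelta}$ cleanly, since $T^{*}\Gamma^{j}T\neq\Gamma^{j}$ in general, and the coefficient-matching in the $\Gamma^{j}$ collapses. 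The paper sidesteps this entirely by never decomposing $W$: its computation uses only that $W$ intertwines the algebra action and $J$, and proceeds mode-by-mode in the equivariant basis.
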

We also compute the action of unitary transformations induced by some outer automorphisms on the spin structures. These are the action of \(SL(2,\Z)\) on the noncommutative \(2\)-torus and the flip automorphism on the noncommutative \(n\)-torus for \(n > 2\).

The set of \(\theta\) of measure \(0\) in Theorem~\ref{thm:unitary inequivalence} is determined by some Diophantine approximation conditions given in \cite{MR1488068}, and includes the \(\theta\) with only rational entries.

In addition to unitary equivalences of real spectral triples, one could also look at Morita equivalences of real spectral triples
 \cite{vrilly_introduction_2006}*{Chapter 7.2}, even though it is not a true equivalence since it is not symmetric in general \cite{MR2371808}*{Remark 1.143}. It would be interesting to study Morita equivalences of spin structures on noncommutative tori, especially since all Morita equivalences of the algebra of functions on the smooth noncommutative tori are known \cite{rieffel_morita_1999} \cite{elliott_morita_2007}*{Theorem 1.1}. We hope to return to this question in the future.

\section{Definition of a real spectral triple}\label{sec:definition of spectral triple}
Since there are various, slightly different, definitions of real spectral triples, cf. \cite{connes_gravity_1996}*{Pages 159--162},\cite{vrilly_introduction_2006}*{Chapter 3},\cite{bonda_elements_2001}*{Chapter 10}, and we need to refer to the axioms unambiguously, we will explicitly state the definition of real spectral triple we use. For the definition of an equivariant spectral triple, see section~\ref{subsec:equivariant}. A spin structure on a manifold \(M\), of dimension \(n\), is a nontrivial double cover of the principal \(SO(n)\) bundle of orthogonal frames of the tangent bundle \(TM\) \cite{lawson_spin_1989}*{Chapter II.1}. After \cite{connes_spectral_2008}, we know that a real spectral triple is the right noncommutative geometry analog of a spin structure on a manifold.

For a real spectral triple we have the following data:
\begin{itemize}
 \item A unital, Fr\'echet algebra \(\A\), $*$-isomorphic to a proper dense subalgebra of a $C^*$-algebra which is stable with respect to the holomorphic functional calculus,
 \item A separable Hilbert space \(\Hi\) with a representation of \(\A\) acting as bounded operators,
 \item An unbounded self-adjoint operator \(D\), called `Dirac operator',
 \item An antilinear isometry \(J\) of \(\Hi\) onto itself, called `reality operator',
 \item An integer \(n\geq 0\), called `dimension',
 \item If $n$ is even, a self-adjoint unitary operator \(\Gamma\) of \(\Hi\) onto itself, such that \(\Gamma^2 = \Id\), called `grading operator'. We call the the spectral triple \emph{even} in this case.
\end{itemize}
These objects also satisfy the Axioms~\ref{ax:D compact resolvent} to~\ref{ax:poincare} in order for them to be called a real spectral triple of dimension \(n\).

\begin{axiom}[Compact resolvent]\label{ax:D compact resolvent}
 The Dirac operator \(D\) has compact resolvent, that is, \(D\) has finite dimensional kernel and \(D^{-1}\) (defined on the orthogonal complement of the kernel) is a compact operator. Furthermore, for all \(a\in \A\), \([D,\pi(a)]\) is a bounded operator.
\end{axiom}

\begin{axiom}[Grading operator]\label{ax:grading operator}
 If \(n\) is even, the \(\Z_2\) grading operator \(\Gamma\) splits the Hilbert space \(\Hi\) as \(\Hi^+\) and \(\Hi^-\), where \(\Hi^{\pm}\) is the \((\pm)\) eigenspace of \(\Gamma\). The operator \(D\) is odd with respect to this operator, $D\Gamma = - \Gamma D$, and the representation \(\pi\) of \(\A\) on \(\Hi\) is even, so we can write 
\[\pi(a) = \begin{pmatrix} a & 0\\ 0 & a\end{pmatrix} \qquad D = \begin{pmatrix} 0 & D^-\\ D^+ & 0\end{pmatrix},\]
where \(D^+\) and \(D^-\) are adjoint to each other.
\end{axiom}

\begin{axiom}\label{ax:signs of triple}
 The operators \(J\), \(D\) and \(\Gamma\) satisfy the commutation relations from Table~\ref{tab:signs of triple}, and the operator \(J\) is unitary: \(J^\dagger = J^{-1}\).
\end{axiom}

\begin{table}\caption{Signs of the spectral triple}
\begin{tabular}{|cclr|cccccccc|}\hline
 \multicolumn{4}{|c|}{\(n\) mod 8}          & 0 & 1 & 2 & 3 & 4 & 5 & 6 & 7\\\hline
 \(J^2\) & \(=\) & \(\pm \Id\) & (\(\pmJJ\))       & $+$ & $+$ & $-$ & $-$ & $-$ & $-$ & $+$ & $+$\\
 \(JD\) & \(=\) & \(\pm DJ\) & (\(\pmJD\))       & $+$ & $-$ & $+$ & $+$ & $+$ & $-$ & $+$ & $+$\\
 \(J\Gamma\) & \(=\) & \(\pm \Gamma J\) & (\(\pmJG\)) & $+$ &  & $-$ &  & $+$ & & $-$ & \\\hline
\end{tabular}
\label{tab:signs of triple}
\end{table}

\begin{axiom}[Dimension]\label{ax:dimension}
 The eigenvalues \(\mu_k\) of \(|D|^{-1}\), arranged in decreasing order, grow asymptotically as
\[\mu_k = \mathcal{O}(k^{-n}),\] for an integer \(n\) (called the dimension).
\end{axiom}

\begin{axiom}[First order condition]\label{ax:first order condition}
For all \(a,b \in \A\) the following commutation relations hold:
\begin{align}
  [a, J b^* J^\dagger] &= 0.\label{eqn:J maps to commutant}\\
  [ [D,a], J b^* J^\dagger] &= 0.\label{eqn:first order condition}
\end{align}
\end{axiom}
We will write \(b^o = J b^* J^\dagger\). The above formulas establish that the \emph{opposite algebra}:
\[\A^o = \{a^o = J^\dagger a^* J | a\in \A\},\] lies within the commutator of \(\A\).

Recall that a Hochschild \(k\)-chain is defined as an element \(\mathbf{c}\) of \(C_k(M,\A)= M \otimes \A^{\otimes k}\), with \(M\) a bimodule over \(\A\). A boundary map \(b: C_k \rightarrow C_{k-1}\) is defined as 
\begin{align*}
 b &= \sum_{i=0,k} {(-1)}^i d_i,\\
 d_0(m\otimes a_1 \otimes \cdots \otimes a_k) &= m a_1\otimes a_2\cdots \otimes a_k,\\
 d_i (m\otimes a_1 \otimes \cdots\otimes a_k) &= m\otimes a_1 \otimes \cdots \otimes a_i a_{i+1} \otimes \cdots \otimes a_k,\\
 d_k(m\otimes a_1 \otimes \cdots \otimes a_k) &= a_k m \otimes a_1 \otimes \cdots \otimes a_{k-1}.
\end{align*}
Since \(b^2=0\), this makes \(C_k(M,\A)\) into a chain complex.

Axiom~\ref{ax:first order condition} gives a representation of Hochschild \(k\)-chains \(C_k(\A, \A\otimes \A^o)\) on \(\Hi\) by 
\begin{equation}
 \pi_D( (a\otimes b^o) \otimes a_1 \otimes \cdots \otimes a_k) = a b^o [D,a_1]\ldots [D,a_k].\label{eqn:hochschild representation}
\end{equation}

\begin{axiom}[Orientability]\label{ax:hochschild cycle}
 There is a Hochschild cycle \(\mathbf{c} \in Z_n (\A,\A\otimes \A^o)\) such that \(\pi_D (\mathbf{c}) = \Gamma\) when $n$ is even, and \(\pi_D (\mathbf{c}) = \Id\) when $n$ is odd.
\end{axiom}

\begin{axiom}[Regularity]
 For all \(a\in \A\), both \(a\) and \([D,a]\) belong to the domain of smoothness \(\bigcap_{k=1}^\infty \text{Dom}\l(\delta^k\r)\), where the derivation \(\delta\) is given by \(\delta(T) = [|D|,T]\), with \(|D| = \sqrt{D^* D}\).
\label{ax:regularity}
\end{axiom}

\begin{axiom}[Finiteness]
 The space of smooth vectors \(\Hi^\infty = \bigcap_{k=1}^{\infty} \text{Dom}\l(D^k\r)\) is a finitely generated projective left \(\A\) module. Also, there is a Hermitian pairing \((\eta | \xi)\) on this module, given by 
\[\langle \eta, \xi \rangle = \dashint (\eta | \xi) |D|^{-n},\]
where \(\dashint\) is the noncommutative integral (defined for example in \cite{vrilly_introduction_2006}*{Chapter 5}).
\label{ax:finiteness}
\end{axiom}

\begin{axiom}[Poincar\'e duality]
The Fredholm index of the operator \(D\) yields a nondegenerate intersection form on the \(K\)-theory ring of the algebra \(\A\otimes \A^o\).
\label{ax:poincare}
\end{axiom}
Finally, we restrict our attention to \emph{irreducible} spectral triples, that is, the only operators commuting with the action of the algebra and \(D\) are the scalars. In the case \(\A\) is commutative, this is equivalent to demanding the manifold is connected, see  \cite{connes_gravity_1996}*{Remark 6 on p.163}.

\subsection{Equivariant spectral triples}\label{subsec:equivariant}
There are different candidates for the notion of symmetries of noncommutative geometries. One obvious candidate is the group of automorphisms of the algebra $\A$, but for noncommutative algebras, this group can be very small, while it seems that there should be more symmetries available. An attempt to enlarge this group of symmetries in an interesting way is the notion of equivariant spectral triples. Equivariant spectral triples were introduced in \cite{sitarz_equivariant_2003}. One describes symmetries of spectral triples in the form of Hopf algebras. In this paper we are interested in $n$-dimensional spectral triples, which are equivariant with respect to a Hopf algebra with $n$ different commuting derivations. This, together with the irreducibility condition, is the analog of a connected compact homogeneous space in commutative geometry. 

In the context of Hopf algebras, we shall use Sweedler's notation for the coproduct: $\Delta h = h_{(1)} \otimes h_{(2)}$. See for example \cite{MR1321145}*{Chapter 3} for an introduction to Hopf algebras, and some standard notation.
An \emph{equivariant real spectral triple} is a real spectral triple $(\A,\Hi,D,J)$ together with a Hopf algebra $H$, with multiplication $\mu$, unit $\eta$, comultiplication \(\Delta\), counit $\epsilon$ and antipode \(S\), and an antilinear involution ${}^*$ making $H$ into a $*$-algebra such that 
\[ \Delta h^* = {\l(\Delta h\r)}^{*\otimes*} \quad \epsilon(h*) = \overline{ \epsilon(h)} \quad {\l(S\circ *\r)}^2 = \Id.\]
Recall that an $H$-module algebra is an algebra $A$ with a complex linear representation $\rho$ of $H$ on $A$ such that $A$ is a linear space, and $\rho$ respects the algebra structure:
\[
 \rho(h) (a_1 a_2) = \l(\rho(h_{(1)}) a_1\r)\l(\rho(h_{(2)}) a_2\r),
\]
for all $h\in H, a_1, a_2\in A$. When $A$ is an $H$-module algebra we define an equivariant (left,right) $A$-module to be a (left,right) $A$-module $M$ such that 
\[ \rho_M(h) (am) = \l(\rho_A(h_{(1)}) a\r) \l(\rho_M(h_{(2)}) m\r),\]
for all $h\in H, a\in A,m\in M$.
The objects of the equivariant real spectral triple transform in a compatible way under the action of the Hopf algebra:
\begin{itemize}
 \item The algebra $\A$ is an $H$-module algebra.
 \item There is a dense subspace $V\subset \Hi$ such that $V$ is an equivariant left $\A$-module.
 \item For every $h\in H$, the Dirac operator is equivariant, $[D,h] = 0$ on the (dense) intersection of the domain of $D$ and $V$.
 \item The action of $H^{op}$ is well defined on the opposite algebra $\A^{o}$ via the equality $J^{-1} h J = {(Sh)}^*$.
 \item If the spectral triple is even, $[\Gamma,h] = 0$.
\end{itemize}

In the case of equivariance with respect to a torus action, we take the universal enveloping algebra $U(\mathfrak{t}^n)$ of the familiar Lie algebra $\mathfrak{t}^n$ of the $n$-torus. This means that we have a basis of derivations $\delta_i$, with a representation $\rho$ on $\Hi$ such that for $a \in \At$, $v \in \Hi$:%
\begin{subequations}
\begin{align}
 \delta_i \delta_j &= \delta_j \delta_i\label{eqn:commutative hopf algebra},\\
 \Delta(\delta_i) &= \delta_i \otimes \Id + \Id \otimes \delta_i\label{eqn:comultiplication},\\
 \rho(\delta_i)\pi(a) v &= \l(\pi( \delta_i a ) + \pi(a) \r) v\label{eqn:equivariance algebra},\\
 \rho(\delta_i) D v &=  D \rho(\delta_i) v\label{eqn:equivariance dirac operator},\\ 
 \rho(\delta_i) J v &= - J {\rho(\delta_i)}^* v\label{eqn:equivariance reality operator}.
\end{align}
\end{subequations}

\section{Outline of the classification}
The outline of the proof of Theorem~\ref{thm:main theorem} is as follows. First in section~\ref{sec:hilbert space and algebra} we determine the action of the algebra on the Hilbert space, such that the equivariance condition is met. This action is already well-known, but we derive it to show that there are no other possibilities. In section~\ref{sec:reality operator} we move to real spectral triples, and determine possible forms of the reality operator $J$, by considering the anti-isomorphism $\At \mapsto \At^o$ and the equivariance condition~\eqref{eqn:equivariance reality operator}. We find several possible families of real spectral triples, only one of which consists of isospectral deformations of spin structures on the commutative torus. In the next section, section~\ref{sec:dirac operator}, we determine the classes of possible Dirac operators for each candidate family of real spectral triples using equivariance of the Dirac operator and the first-order condition, and show that only the isospectral deformation family is compatible with the compact resolvent condition.

In section~\ref{sec:hochschild homology} we determine that the parameters $\vtau$ in the Dirac operators must be linearly independent vectors spanning $\R^n$, using the Hochschild homology condition and earlier results on the Hochschild homology on noncommutative tori. The last step in the classification is done in section~\ref{sec:dimension,finiteness,regularity}, where we use the spin manifold reconstruction theorem to show that the Dirac operator is really a Dirac operator in the sense of spin geometry.

After the classification, we give in section~\ref{sec:uniqueness and freedom} an explicit description of the constructed real spectral triples on the noncommutative torus.

Finally, in section~\ref{sec:unitary equivalences}, we study unitary equivalences of the real spectral triples, and show that unitary equivalences induced by inner automorphisms of the algebra do not change the spin structure for almost all $\theta$. When $n=2$ we show that the known outer automorphisms do change the spin structure, if the real spectral triple is an isospectral deformation of a nontrivial spin structure. When $n>2$, we show that the flip automorphism cannot change the spin structure.

\section{Hilbert space and algebra}\label{sec:hilbert space and algebra}
We look for possible equivariant representations of the algebra of functions of the smooth noncommutative torus, and give a basis of the Hilbert space $\Hi$ for which equivariance is obvious. We do not use any special conditions from the definition of a real spectral triple, except that the Hilbert space should be separable. 

We denote the noncommutative torus, or more precisely, the algebra of continuous functions on the noncommutative $n$-torus, by $\Atc$, where $\theta$ is an antisymmetric real $n\times n$ matrix.
As Hopf algebra symmetry for which the algebra representation must be equivariant we take the Hopf algebra generated by $n$ independent commuting elements $\delta_1,\ldots,\delta_n$.

The algebra of smooth or continuous functions on the noncommutative torus is generated by unitary elements $U_{e_1},\ldots, U_{e_n}$ such that 
\[ U_{e_k} U_{e_l} = \exp\l(2\pi i \theta_{kl}\r) U_{e_l} U_{e_k},\]
with $\theta_{kl}$ the component of the matrix at position $(k,l)$. As a short-hand notation, we will write \[e( \cdot) = \exp(2 \pi i \cdot ).\]
The Hopf algebra action of our basis elements on the unitary generators is \cite{sitarz_equivariant_2003}:
$\delta_i U_{e_j} = U_{e_j}$ if $i=j$, and $0$ otherwise.
If we interpret the $e_j$ as the $j$-th basis vector of $\Z^n$, we can write more generally unitary elements of the algebra as:
\[U_{\vx} = e\l( \half \sum_{k>j} x_j \theta_{jk} x_k\r) {\l(U_{e_1}\r)}^{x_1} {\l(U_{e_2}\r)}^{x_2} \cdots {\l(U_{e_n}\r)}^{x_n},\]
for $\vx = \sum x_j e_j$.
The Fr\'echet algebra of smooth functions on the noncommutative torus is a dense subalgebra of this algebra.

Just as for the algebra, we will write $\delta_{\vx}$ for the derivation given by 
\[\delta_{\vx} = \delta_{x_1} \delta_{x_2} \cdots \delta_{x_n}.\]
From the definitions, it is immediate that
\begin{equation}
 \delta_{\vx} U_{\vy} = \l(\vx\cdot\vy\r) U_{\vy},
\end{equation}
where $\l(\vx\cdot\vy\r)$ is the standard inner product on $\Z^n$.

Now we look for a Hilbert space $\Hi_0$ which is an equivariant left $A_\theta$-module. As a basis of $\Hi_0$ we choose mutual eigenvectors $e_{\vmu}$ of the derivations:
\[\rho(\delta_i) e_{\vmu} = \mu_i e_{\vmu}.\]
where the $\vmu$ form a countable subset of $\R^n$.

In order for the spectral triple to be a noncommutative torus, we demand that the action of the algebra is equivariant with respect to a torus action, as in equation~\eqref{eqn:equivariance algebra}.
Written out for the unitary generators $U_{\vx}$, we see:
\[\pi_0(U_{\vx}) e_{\vmu} = u_{\vx,\vmu} e_{\vmu+\vx},\]
with $u_{\vx,\vmu} \in \C$ to be determined.
Thus for the minimal irreducible equivariant representation the $\vmu$ will lie in a translate of a lattice:
\begin{equation}\Hi_0 = \bigoplus_{\vm\in \Z^n} \Hi_{\vmu_0 + \vm},\label{eqn:irred hilbert space}\end{equation}
where each $\Hi_{\vmu_0+\vm} \cong \C$. There are no restrictions yet on $\vmu_0$, these will be determined later.

Since the $U_{\vx}$ should be unitary, we have that 
\begin{align}\langle e_{\vnu}, u_{\vx,\vmu} e_{\vmu+\vx}\rangle &= \langle u_{-\vx,\vnu} e_{\vnu-\vx}, e_{\vmu}\rangle \notag\\
\Rightarrow u_{\vx,\vmu} \delta_{\vnu,\vmu+\vx} &= \overline{u_{-\vx,\vnu}} \delta_{\vnu-\vx,\vmu}.\label{eqn:unitarity relations}\end{align}
Finally the definition of $U_{\vx}$ in terms of $U_i$ gives the relations
\begin{equation}
 u_{\vx+\vy,\vmu} = e(\half \vx \cdot \theta \vy) u_{\vx,\vmu+\vy} u_{\vy,\vmu}.\label{eqn:linearity relations}
\end{equation}

\begin{lemma}\label{lem:algebra representation}
Up to unitary transformations of $\Hi$ any unitary equivariant representation of $\At$ on $\Hi$ is given by
\begin{equation}
\pi_0^\am(U_{\vx}) e_{\vmu}= e\l(\half\vx \cd \am \vx + \vx \cd \am \vmu \r) e_{\vmu+\vx},\label{eqn:short form for generators}
\end{equation}
with $\am$ any $n\times n$ matrix such that $\am - \am^t = \theta$. 
\end{lemma}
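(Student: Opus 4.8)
\textbf{Proof proposal for Lemma~\ref{lem:algebra representation}.}

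The plan is to solve the functional equations \eqref{eqn:unitarity relations} and \eqref{eqn:linearity relations} for the coefficients $u_{\vx,\vmu}$ directly, and to absorb the remaining freedom into a unitary transformation of $\Hi$. First I would observe that \eqref{eqn:unitarity relations} forces $|u_{\vx,\vmu}| = |u_{-\vx,\vmu+\vx}|$ for all $\vx,\vmu$, and combining this with \eqref{eqn:linearity relations} taken at $\vy = -\vx$ (where $u_{\vo,\vmu}$ must be a phase, in fact $1$ after normalizing) shows that every $u_{\vx,\vmu}$ has modulus $1$. So we may write $u_{\vx,\vmu} = e(f(\vx,\vmu))$ for a real-valued function $f$, and \eqref{eqn:linearity relations} becomes the additive cocycle-type identity
\[
 f(\vx+\vy,\vmu) \equiv \tfrac12 \vx\cd\theta\vy + f(\vx,\vmu+\vy) + f(\vy,\vmu) \pmod{\Z}.
\]

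Next I would exploit the gauge freedom: replacing $e_{\vmu}$ by $e(g(\vmu)) e_{\vmu}$ for an arbitrary real function $g$ on the orbit $\vmu_0 + \Z^n$ changes $f(\vx,\vmu)$ to $f(\vx,\vmu) + g(\vmu+\vx) - g(\vmu)$, and this is exactly the freedom to modify $f$ by a coboundary in the $\vmu$-variable. The strategy is to use this to normalize $f$ on the generators $e_j$ and then propagate. Writing $\vx = \sum x_j e_j$ and iterating the cocycle identity, one expresses $f(\vx,\vmu)$ as a sum of terms $f(e_j, \cdot)$ evaluated at shifted arguments plus an explicit quadratic expression in the $\theta_{jk}$ coming from the $\tfrac12\vx\cd\theta\vy$ terms. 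Since $\delta_i U_{e_j} = \delta_{ij} U_{e_j}$ fixes the action up to the scalars $u_{e_j,\vmu}$, and equivariance \eqref{eqn:equivariance algebra} relates $u_{e_j,\vmu}$ at different $\vmu$ only through the already-used structure, the upshot is that $f(e_j,\vmu)$ must be affine in $\vmu$: $f(e_j,\vmu) = c_j + \sum_k A_{jk}\mu_k$ for constants $c_j$ and a matrix $A = \am$. The constants $c_j$ can be gauged away (or absorbed into the diagonal of $\am$, which is unconstrained), and reassembling gives precisely \eqref{eqn:short form for generators} with the requirement that the antisymmetric part of $\am$ reproduce the commutation relation $U_{e_k}U_{e_l} = e(\theta_{kl})U_{e_l}U_{e_k}$, i.e.\ $\am - \am^t = \theta$.

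Finally I would check consistency in both directions: that \eqref{eqn:short form for generators} does satisfy \eqref{eqn:unitarity relations} and \eqref{eqn:linearity relations} for \emph{any} $\am$ with $\am-\am^t=\theta$ (a short computation with the identity $\vx\cd\am\vy - \vy\cd\am\vx = \vx\cd\theta\vy$), and that two such matrices $\am,\am'$ differing by a symmetric matrix give unitarily equivalent representations via $e_{\vmu}\mapsto e(\tfrac12\vmu\cd(\am'-\am)\vmu)e_{\vmu}$ — this last point explains why the diagonal/symmetric part is genuine residual freedom rather than extra structure, and squares with the phrase ``up to unitary transformations.'' The main obstacle I anticipate is the bookkeeping in the propagation step: showing rigorously that the cocycle identity together with equivariance forces $f(e_j,\cdot)$ to be \emph{exactly} affine (no room for a nontrivial additive character of the lattice or a more exotic solution), and handling the ordering-dependence of the $\tfrac12\vx\cd\theta\vy$ contributions when $\vx$ is decomposed into generators in a fixed order. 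Everything else is routine manipulation of phases.
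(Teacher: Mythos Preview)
Your approach is different from the paper's and contains a real gap at exactly the point you flag. The cocycle identity together with equivariance does \emph{not} force $f(e_j,\vmu)$ to be affine in $\vmu$: take any solution of the form \eqref{eqn:short form for generators} and gauge it by an arbitrary (non-quadratic) phase $g(\vmu)$, and you obtain a perfectly valid unitary equivariant representation with $f(e_j,\vmu)$ non-affine. Equivariance \eqref{eqn:equivariance algebra} only tells you $\pi(U_{\vx})e_{\vmu}$ is proportional to $e_{\vmu+\vx}$; it imposes nothing further on the phases. What is true, and what you need, is that the gauge freedom is \emph{large enough} to reduce any solution to affine form. This requires an argument: set $h_j(\vmu)=f(e_j,\vmu)$, observe from the cocycle that the mixed second differences $[h_j(\vmu+e_k)-h_j(\vmu)]-[h_k(\vmu+e_j)-h_k(\vmu)]$ are the constants $-\theta_{jk}$, subtract off an affine candidate $a_j(\vmu)=\sum_k A_{jk}\mu_k$ with $\am-\am^t=\theta$ to make these mixed differences vanish, and then integrate the resulting closed system of first differences on the lattice $\vmu_0+\Z^n$ to produce $g$. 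This is not hard, but it is the actual content, and your write-up attributes the reduction to the wrong mechanism.

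The paper sidesteps all of this. It simply verifies that \eqref{eqn:short form for generators} satisfies \eqref{eqn:unitarity relations} and \eqref{eqn:linearity relations}, and then, given any two unitary equivariant representations $\pi,\pi'$ on $\Hi_0$, uses that $e_0$ is a cyclic vector to define an intertwiner $T$ by $Te_0=e_0$ and $T\pi(U_{\vx})e_0=\pi'(U_{\vx})e_0$; well-definedness follows from both representations obeying the same relation \eqref{eqn:linearity relations}, and unitarity from a two-line inner-product computation. This is shorter and avoids solving the cocycle altogether. Your explicit gauge $e_{\vmu}\mapsto e(\tfrac12\vmu\cd(\am'-\am)\vmu)e_{\vmu}$ for passing between different choices of $\am$ is correct and is a nice concrete instance of the paper's abstract intertwiner.
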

Since the representations $\pi_0^{\am}$, given by different matrices $\am$ such that $\am - \am^t = \theta$ are equivalent, we drop the $\am$ from the notation, and just write $\pi_0$ for a representation defined by equation~\eqref{eqn:short form for generators}.
\begin{proof}
It is clear that for any matrix $\am$ the representation given above satisfies the relations~\eqref{eqn:unitarity relations} and \eqref{eqn:linearity relations}.
Given two representations $\pi$ and $\pi'$ satisfying the equivariance condition~\eqref{eqn:equivariance algebra} we can write any element $w\in \Hi$ as a unique sum $\sum_{\vx\in \Z^n} \lambda_{\vx} \pi(U_{\vx}) e_0$ with ${(\lambda)}_{\vx\in\Z^n} \in \ell^2(\Z^n)$. In other words, $e_0$ is a cyclic vector with respect to the algebra action. Now construct an operator $T:\Hi \rightarrow \Hi$ by setting $T e_0 = e_0$ and extending by \[T w = T \sum_{\vx\in \Z^n} \lambda_{\vx} \pi(U_{\vx}) e_0 = \sum_{\vx\in \Z^n} \lambda_{\vx} \pi'(U_{\vx}) e_0.\] 
This is well defined if $\pi$ and $\pi'$ are representations of the same algebra $\At$, since they satisfy the same algebra relation~\eqref{eqn:linearity relations}, and it is an invertible map on $\Hi$, because both $\pi(U_{\vx}) e_0$ and $\pi'(U_{\vx})e_0$ span the Hilbert space if we take all $\vx\in \Z^n$. By construction $T^{-1} \pi'(U_{\vx}) T = \pi(U_{\vx})$, and it is a unitary transformation, because we can calculate:
\begin{align*}
 \langle T v, T w\rangle &= \sum_{\vx,\vy \in \Z^n} \bar{\lambda}_{\vx} \mu_{\vy} \langle \pi'(U_{\vx}) e_0,\pi'(U_{\vy}) e_0\rangle\\
 &= \sum_{\vx - \vy = 0} \bar{\lambda}_{\vx} \mu_{\vy} \langle \pi'(U_{\vx}) e_0,\pi'(U_{\vy})e_0\rangle\\
 &= \sum_{\vx\in \Z^n}\bar{\lambda}_{\vx} \mu_{-\vx}\\
 &= \langle v,w\rangle,
\end{align*}
since $\langle e_{\vx} ,e_{\vy}\rangle = 0$ if $\vx\neq \vy$, and the representations $\pi$ and $\pi'$ are unitary. 
\end{proof}
There might be more unitary equivalences of the algebra, a question which we explore in section~\ref{sec:unitary equivalences}, but for now we were only interested in possible representations of the algebra.

\section{Reality operator}\label{sec:reality operator}
In this section, we derive conditions on the equivariant reality operator $J$, to give us a real spectral triple. We will only consider conditions following from relations between $\A$, $\Hi$ and $J$ and the equivariance condition~\eqref{eqn:equivariance reality operator}. No use is made of the Dirac operator in this section.

The equivariance condition for $J$ is given in~\eqref{eqn:equivariance reality operator}:
\begin{equation} \rho(l) J v = - J \rho(l^*) v,\label{eqn:equivariance J operator}\end{equation}
for $v\in \Hi$, and $l$ an element of the Hopf algebra of the symmetries of the noncommutative $n$-torus. 
For our basic derivations $\delta_i$, we have $\delta_i^* = \delta_i$, so this just means
\begin{equation} \delta_i J e_{\vmu} = - \mu_i J e_{\vmu},\label{eqn:J maps to negative}\end{equation}
for all $i$. Since we are working with the representation given in \eqref{eqn:irred hilbert space}, we see that $J$ must map an element $e_{\vmu}$ of the basis to $e_{-\vmu}$.

The Tomita involution $J_0(a) = a^*$ \cite{MR0270168} gives a commuting representation, but it does not follow the commutation relations in Table~\ref{tab:signs of triple}, so we will have to enlarge our Hilbert space. 
Define 
\begin{equation}\Hi := \bigoplus_{j \in I} \Hi_j,\label{eqn:direct sum}\end{equation}
with each $\Hi_j$ as in \eqref{eqn:irred hilbert space} and $I$ an index set. Every nondegenerate representation of an involutive Banach algebra is a direct sum of cyclic representations \cite{MR548728}*{Proposition I.9.17}, and because of the equivariance condition~\eqref{eqn:equivariance algebra}, we get that the only cyclic representations we can consider are the ones given by Lemma~\ref{lem:algebra representation}. We write basis vectors of this Hilbert space as $e_{\vmu,j}$ with $\vmu \in \Z^n$ and $j\in I$. 

For different $j\in I$, lattices spanned by $\vmu$ could a priori be shifted by a different amount, satisfying \eqref{eqn:J maps to negative}, but we will see in section~\ref{sec:dirac operator} that this cannot be the case if the spectral triple is irreducible.

We look for an antilinear operator $J$ such that $J^2 = \pmJJ \Id$, with signs as in Table~\ref{tab:signs of triple}, and so that for every $a\in \At$, $Ja^* J^{-1}$ commutes with all $b\in \At$, satisfying equation~\eqref{eqn:J maps to commutant}. The image of $\At$ under the isomorphism $a \mapsto J a^* J^{-1}$ is denoted by $\At^o$. We write $U_{\vx}^o$ for the image of $U_{\vx}$.

Firstly, equation~\eqref{eqn:equivariance J operator} has as a consequence that $J$ acts as follows on elements of the basis $e_{\vmu,j}$:
\begin{equation} J e_{\vmu,j} = \sum a_{kj}(\vmu) e_{-\vmu,k}.\label{eqn:action of J on basis}\end{equation}
We can thus write $J e_{\vmu,j} = \Lambda(-\vmu) J_0 e_{\vmu,j}$, with $\Lambda(-\vmu)$ some unitary or skew unitary bounded linear functional, and $J_0$ an antilinear diagonal operator:
\[ J_0 a e_{\vmu,j} = a^* e_{-\vmu,j}.\]

If we apply $J$ twice, we should get $\pmJJ \Id$, which can be written as
\[ J\cdot J e_{\vmu,j} = J \Lambda(-\vmu) e_{-\vmu,j} = \Lambda(\vmu) \Lambda^*(-\vmu) e_{-\vmu,j},\]
and so we see that 
\begin{equation}
 \Lambda(\vmu) {\Lambda(-\vmu)}^* = \pmJJ \Id.\label{eqn:j squared}
\end{equation}

By applying $J$ on a unitary generator $U_{\vx}$ of $\A_{\theta}$ we get the following condition on $\Lambda(\vmu)$:
\begin{lemma}\label{lem:commutant property}
 The map $a \mapsto J a^* J^{\dagger}$ is an isomorphism into the commutant, if and only if for all $\vx,\vy \in \Z^n$:%
\begin{subequations}
\begin{equation}\Lambda(\vx + \vy) = e(\vx \cd \am \vy + \vy \cd \am \vx) \Lambda(\vx) {\Lambda(0)}^{\dagger} \Lambda(\vy),\end{equation}
where $J e_{\vmu,j} = \Lambda(-\vmu) J_0 e_{\vmu,j}$, with $J_0$ the Tomita involution, and
\begin{align}\Lambda(\vx) {\Lambda(\vx)}^{\dagger} &= \Id,\\
\Lambda(\vx) {\Lambda(-\vx)}^* &= \pmJJ \Id.\end{align}
\end{subequations}
As a consequence
\begin{equation}
 U_{\vx}^o e_{\vmu,j} = e(\vmu \cd \am \vx + \half\vx \cd \am \vx) \Lambda'(\vx) {\Lambda(0)}^{\dagger} e_{\vmu + \vx,j},\label{eqn:opposite algebra action}
\end{equation}
where $\Lambda(\vmu) = e(\vmu \cd \am \vmu) \Lambda'(\vmu)$, and ${\Lambda'(\vmu)}_{ij} = c_{ij} e(\vphi_{ij}(\vmu))$ with $c_{ij}\in\C$ and $\vphi_{ij}:\R^n\rightarrow \R$ such that $\sum_{j\in I} c_{ij}e(\vphi_{ij}(-\vmu)) c_{jk}^*e(-\vphi_{ij}(\vmu)) =\pmJJ \delta_{ik}$.
\end{lemma}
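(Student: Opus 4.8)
The plan is to translate the three requirements on $J$ — that $a \mapsto J a^* J^\dagger$ lands in the commutant, that $J$ is (anti)unitary, and that $J^2 = \pmJJ \Id$ — directly into functional equations for $\Lambda$, and then to read off the action of $U_{\vx}^o$ on the basis. First I would compute $U_{\vx}^o e_{\vmu,j} = J \pi_0(U_{\vx})^* J^\dagger e_{\vmu,j}$ step by step using Lemma~\ref{lem:algebra representation}: applying $J^\dagger$ sends $e_{\vmu,j}$ to a combination of the $e_{-\vmu,k}$ via $\Lambda(\vmu)^\dagger$ (the adjoint of the relation $J e_{\vmu,j} = \Lambda(-\vmu) J_0 e_{\vmu,j}$), then $\pi_0(U_{\vx})^* = \pi_0(U_{-\vx})$ shifts $-\vmu$ to $-\vmu - \vx$ picking up the phase $e\l(\half \vx \cd \am \vx - \vx \cd \am(-\vmu)\r)^* = e\l(-\half \vx\cd\am\vx - \vx\cd\am\vmu\r)^*$ — here I must be careful with the complex conjugation coming from the fact that $J$ is antilinear, so the scalar phases get conjugated an odd number of times — and finally $J$ sends $e_{-\vmu-\vx,k}$ back to a combination of $e_{\vmu+\vx,l}$ via $\Lambda(\vmu+\vx)$. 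Collecting the phases and the $\Lambda$-factors yields an explicit formula for $U_{\vx}^o$ of the shape displayed in the statement.

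Next I would impose the commutant condition $[\pi_0(U_{\vy}), U_{\vx}^o] = 0$ for all $\vx,\vy \in \Z^n$. Since both operators are shifts combined with scalar (or matrix-in-$I$) phases, the commutator vanishes iff the accumulated phases agree; equating them gives the cocycle-type identity
\[
\Lambda(\vx+\vy) = e\l(\vx\cd\am\vy + \vy\cd\am\vx\r)\Lambda(\vx){\Lambda(0)}^\dagger\Lambda(\vy),
\]
the symmetric combination $\vx\cd\am\vy+\vy\cd\am\vx$ appearing because the $\theta$-part $\vx\cd\am\vy-\vy\cd\am\vx$ is exactly what makes the generators of $\At$ not commute and must be cancelled, leaving the symmetrized quadratic form. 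The unitarity of $J$ together with $J_0$ being the Tomita involution forces $\Lambda(\vx){\Lambda(\vx)}^\dagger = \Id$, and $J^2 = \pmJJ\Id$ is precisely equation~\eqref{eqn:j squared}, i.e. $\Lambda(\vx){\Lambda(-\vx)}^* = \pmJJ\Id$. Conversely, if $\Lambda$ satisfies these three identities, then reversing the computation shows $a\mapsto Ja^*J^\dagger$ is a well-defined $*$-antihomomorphism into the commutant; this gives the "if and only if".

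Finally, to get the last assertion about $\Lambda'$, I would substitute $\Lambda(\vmu) = e(\vmu\cd\am\vmu)\Lambda'(\vmu)$ into the formula for $U_{\vx}^o$: the quadratic phase $e(\vmu\cd\am\vmu)$ partially cancels against its shifted counterpart $e((\vmu+\vx)\cd\am(\vmu+\vx))$, leaving the stated linear-in-$\vmu$ phase $e(\vmu\cd\am\vx + \half\vx\cd\am\vx)$ times $\Lambda'(\vx){\Lambda(0)}^\dagger$. Writing each matrix entry as ${\Lambda'(\vmu)}_{ij} = c_{ij}e(\vphi_{ij}(\vmu))$ — legitimate since every entry is a bounded function of modulus governed by the unitarity relation — and feeding this into $\Lambda(\vmu){\Lambda(-\vmu)}^* = \pmJJ\Id$ produces the displayed constraint $\sum_{j\in I} c_{ij}e(\vphi_{ij}(-\vmu))c_{jk}^*e(-\vphi_{ij}(\vmu)) = \pmJJ\delta_{ik}$ (the $e(\vmu\cd\am\vmu)$ factors from $\Lambda$ and $\Lambda^*$ at $\pm\vmu$ cancel because $\am$ enters quadratically). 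I expect the main obstacle to be bookkeeping: tracking exactly which scalar factors get complex-conjugated because $J$ is antilinear while $J_0$ is the bare Tomita involution, and making sure the index set $I$ is handled as matrix indices throughout rather than slipping into a scalar computation. Once the antilinearity signs are pinned down, all three functional equations and the formula for $U_{\vx}^o$ fall out of a single careful expansion.
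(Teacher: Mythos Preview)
Your approach is essentially the same as the paper's: compute $U_{\vx}^o e_{\vmu,j}$ explicitly from $J U_{\vx}^* J^\dagger$, impose $[U_{\vy},U_{\vx}^o]=0$, extract the functional equation for $\Lambda$, and then pass to $\Lambda'$ to simplify the action of $U_{\vx}^o$. The bookkeeping worries you flag (antilinearity, matrix indices in $I$) are exactly the ones that need care, and the paper's computation confirms your predicted phases.

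One point you gloss over that the paper does address: you treat $\Lambda(\vmu)=e(\vmu\cd\am\vmu)\Lambda'(\vmu)$ as an ansatz to substitute, and you justify the entrywise form $c_{ij}e(\vphi_{ij}(\vmu))$ by saying ``every entry is a bounded function of modulus governed by the unitarity relation''. Boundedness alone does not give that form. The paper instead argues (briefly) that the commutant relation, read at general $\vmu$ rather than just $\vmu=0$, forces each entry of $\Lambda$ to be $e(f_{ij})$ with $f_{ij}$ quadratic-plus-lower-order, and that the quadratic part must be common to all entries and equal to $\vmu\cd\am\vmu$. This is what makes $\Lambda'(\vmu+\vx)\Lambda'(\vmu)^\dagger$ independent of $\vmu$, which is precisely what you need for the final formula $U_{\vx}^o e_{\vmu,j}=e(\vmu\cd\am\vx+\tfrac12\vx\cd\am\vx)\Lambda'(\vx)\Lambda(0)^\dagger e_{\vmu+\vx,j}$. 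Your outline will go through once you make this derivation explicit rather than assuming the decomposition.
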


\begin{proof}
 First we calculate $U_{\vx}^o = J U_{\vx}^* J^{\dagger}$ using equation~\eqref{eqn:action of J on basis}:
\begin{align*}
 U_{\vx}^o e_{\vmu,j} &= J U_{\vx}^* J^{\dagger} e_{\vmu,j}\\
&= J U_{\vx}^* {\Lambda(\vmu)}^t e_{-\vmu,j}\\
&= J e\l(\vx\cd \am\vmu + \half \vx\cd \am \vx\r) {\Lambda(\vmu)}^t e_{-\vmu-\vx,j}\\
&= \Lambda(\vmu+\vx) e\l(-\vx\cd \am \vmu - \half \vx \cd \am \vx \r) {\Lambda(\vmu)}^\dagger e_{\vmu+\vx,j}.\\
\end{align*}
We compute the commutator $[U_{\vy},U_{\vx}^o]$:

\begin{align*}
U_{\vy} U_{\vx}^o e_{\vmu,j} &= e\l(\vy \cd \am (\vmu+\vx + \half \vy) -\vx\cd \am (\vmu +\half\vx)\r) \Lambda(\vmu+\vx) \Lambda^{\dagger}(\vmu) e_{\vmu+\vx+\vy,j}\\
U_{\vx}^o U_{\vy} e_{\vmu,j} &= e\l(-\vx \cd \am (\vmu+\vy + \half \vx)+\vy\cd \am (\vmu + \half \vy)\r) \Lambda(\vmu+\vx+\vy) \Lambda^{\dagger}(\vmu+\vy) e_{\vmu+\vx+\vy,j}.
\end{align*}
If the commutator vanishes, we see that by canceling common terms we must have:
\[ e\l(\vy \cd \am \vx\r) \Lambda(\vmu+\vx) \Lambda^{\dagger}(\vmu) = e\l(-\vx \cd \am \vy\r) \Lambda(\vmu+\vx + \vy) \Lambda^{\dagger}(\vmu+\vy).\]

This has as a consequence that ${\Lambda(\vx)}_{ij}$ consists of $e(f_{ij}(x))$ with $f_{ij}(x)$ a function of the form $\vx\cd \bm_{ij} \vx + \vphi_{ij}(\vx) + \nu_{ij}$. We see that the quadratic part must be the same for each component, and that $\bm_{ij} = \am$. The constant part $\nu_{ij}$ can be absorbed into by unitary transformation. We can thus write 
\begin{equation}\Lambda(\vx) = e(\vx \cd \am \vx) \Lambda'(\vx),\label{eqn:lambda accent}\end{equation}
where $\Lambda'(\vx)$ consists of functions $c_{ij} e(\vphi_{ij}(\vx))$ such that $\Lambda'(\vx) = \pmJJ {\Lambda'(-x)}^t$ and $\Lambda'(\vx) {\Lambda'(\vx)}^{\dagger} = \Id$.
If we then calculate $U_{\vx}^o$:
\begin{align*}
 U_{\vx}^o e_{\vmu} &= \Lambda(\vmu+\vx) e\l(-\vx\cd \am \vmu - \half \vx \cd \am \vx \r) {\Lambda(\vmu)}^\dagger e_{\vmu+\vx,j}\\
&= e\l( (\vmu + \vx)\cd \am(\vmu + \vx) -\vx\cd \am \vmu - \half \vx \cd \am \vx - \vmu \cd \am \vmu\r) \Lambda'(\vmu+\vx) {\Lambda'(\vmu)}^\dagger e_{\vmu+\vx,j}\\
&= e\l(\vmu\cd \am \vx + \half \vx \cd \am \vx \r)\Lambda'(\vmu+\vx) {\Lambda'(\vmu)}^\dagger e_{\vmu+\vx,j}\\
&= e\l(\vmu\cd \am \vx + \half \vx \cd \am \vx \r)\Lambda'(\vx){\Lambda'(0)}^\dagger e_{\vmu+\vx,j}.
\end{align*}
By definition~\eqref{eqn:lambda accent} of $\Lambda'$, we have $\Lambda'(0) = \Lambda(0)$, so this is exactly equation~\eqref{eqn:opposite algebra action}.
\end{proof}

\section{Dirac operator}\label{sec:dirac operator}
The remaining piece of the real spectral triple is the Dirac operator. In this section, using the results from the previous section, the equivariance condition~\eqref{eqn:equivariance dirac operator}, and Axioms~\ref{ax:D compact resolvent} and \ref{ax:first order condition}, we derive conditions for the Dirac operator $D$. We see that for different forms of $\Lambda$ as given in Lemma~\ref{lem:commutant property}, only the form ${\Lambda'(\vx)}^2 =\Id$ can lead to isospectral deformations of spin structures on the commutative $n$-torus. By applying the compact resolvent condition of Axiom~\ref{ax:D compact resolvent}, we show that this is the only possibility compatible with the definition of noncommutative geometry.

An equivariant Dirac operator $D$ commutes with the basic derivations $\delta_i$ as described in \eqref{eqn:equivariance dirac operator}:
\[
 D e_{\vmu,j} = \sum_{k\in I} d_{\vmu,j}^k e_{\vmu,k}.
\]
Since $D$ should be self-adjoint, we have that $d_{\vmu,j}^k = {\l(d_{\vmu,k}^j\r)}^*$. We will write
\begin{equation} D e_{\vmu,j} = D(\vmu) e_{\vmu,j}.\label{eqn:definition D(mu)}\end{equation}
This means that \(D(\vmu)\) is the operator $D$ restricted to the eigenspace of derivations with eigenvalue $\vmu$. This is well defined due to the equivariance condition~\eqref{eqn:equivariance dirac operator}.

Given an element $\vmu$ in a shifted lattice $\tilde{\Z}^n$, we define the Hilbert space $\Hi_{\vmu}$ as the span of eigenvectors $v_j$ of the basic derivations $\delta_i$ such that $\delta_i v_j = \vmu_i v_j$ for each $i$.
As a consequence of the equivariance of the Dirac operator and the irreducibility condition, all the $\vmu$ must lie in the same lattice, by the following argument.

Between two Hilbert spaces $\Hi_{\vmu}$ and $\Hi_{\vnu}$ we have an isometry if $\vmu - \vnu \in \Z^n$, given by the unitary element $U_{\vmu-\vnu}$ ofthe algebra. Now consider the projector $P_{\vmu}$ that is $\text{Id}$ on $\Hi_{\vnu}$ for which $\vmu-\vnu \in\Z^n$, and $0$ otherwise. This projection clearly commutes with the algebra, and, because of \eqref{eqn:definition D(mu)}, also with the Dirac operator. If the spectral triple is irreducible, only scalars may commute with both the algebra and the Dirac operator, so $P_{\vmu}$ is the identity on the whole Hilbert space $\Hi$, hence all lattices are shifted by the same vector $\veps$. Because of \eqref{eqn:J maps to negative}, we can then conclude that $\veps$ consists of elements which are either $0$ or $\half$.

From the first order condition in equation~\eqref{eqn:first order condition}, we deduce:
\begin{lemma}\label{lem:first order Dirac operator}
 An equivariant Dirac operator $D$ that satisfies the first order condition must be of the form:
\begin{equation}
 D(\vx + \vy) = {\l(\Lambda'(\vx) {\Lambda(0)}^\dagger\r)}^2 (D(\vy) - D(0)) + D(\vx).\label{eqn:recursion for D}
\end{equation}
\end{lemma}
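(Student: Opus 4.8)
The plan is to read \eqref{eqn:recursion for D} off the first order condition \eqref{eqn:first order condition} applied to the unitary generators, using only the explicit formulas \eqref{eqn:short form for generators} and \eqref{eqn:opposite algebra action}, the block decomposition $D|_{\Hi_{\vmu}}=D(\vmu)$ from \eqref{eqn:definition D(mu)}, and the relations for $\Lambda'$ collected in Lemma~\ref{lem:commutant property}.

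First I would reduce \eqref{eqn:first order condition} to generators: since $T\mapsto[D,T]$ is a derivation, $a\mapsto a^{o}$ is an anti-homomorphism, and $\At$ and $\At^{o}$ commute by \eqref{eqn:J maps to commutant}, a routine bilinear induction (plus continuity) shows that it suffices to have $\bigl[[D,\pi_{0}(U_{\vy})],U_{\vx}^{o}\bigr]=0$ for all $\vx,\vy$. I would then evaluate this double commutator on a basis vector $e_{\vmu,j}$. By \eqref{eqn:short form for generators} and \eqref{eqn:definition D(mu)}, $[D,\pi_{0}(U_{\vy})]e_{\vmu,j}=e\bigl(\half\vy\cd\am\vy+\vy\cd\am\vmu\bigr)\bigl(D(\vmu+\vy)-D(\vmu)\bigr)e_{\vmu+\vy,j}$, with $\pi_{0}(U_{\vy})$ acting on the fibre as a scalar and $D(\vmu)$ as an operator on the fibre; by \eqref{eqn:opposite algebra action}, $U_{\vx}^{o}$ acts on the fibre of $\Hi_{\vmu}$ as $e(\vmu\cd\am\vx+\half\vx\cd\am\vx)$ times the fixed operator $M(\vx):=\Lambda'(\vx)\Lambda(0)^{\dagger}$. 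Comparing the two orderings of $U_{\vx}^{o}$ and $[D,\pi_{0}(U_{\vy})]$ on $e_{\vmu,j}$, and checking that the two products of $e(\cdot)$-phases agree, the condition collapses to the operator identity
\[ M(\vx)\bigl(D(\vmu+\vy)-D(\vmu)\bigr)=\bigl(D(\vmu+\vx+\vy)-D(\vmu+\vx)\bigr)M(\vx)\qquad\text{for all }\vmu,\vx,\vy. \]
Setting $\vmu=\vo$ gives $D(\vx+\vy)-D(\vx)=M(\vx)\bigl(D(\vy)-D(\vo)\bigr)M(\vx)^{-1}$.

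It remains to turn the conjugation by $M(\vx)$ into left multiplication by $M(\vx)^{2}=(\Lambda'(\vx)\Lambda(0)^{\dagger})^{2}$. For this I would use the further structure of $M$ coming from Lemma~\ref{lem:commutant property}: $\vx\mapsto M(\vx)$ is multiplicative with $M(\vo)=\Id$ and each $M(\vx)$ unitary, and the reality relation $\Lambda'(\vx)=\pmJJ\,\Lambda'(-\vx)^{t}$ links $M(\vx)$ to a transpose of $M(-\vx)$; combining this with the operator identity above taken at $\vmu=\vo$ and at $\vmu=-\vx$, and with self-adjointness of $D(\vmu)$ (and, if needed, $J^{2}=\pmJJ\,\Id$), one rewrites the conjugating factor $M(\vx)^{-1}$ as $M(\vx)$ and obtains exactly \eqref{eqn:recursion for D}.

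I expect the main obstacle to be this last step: the first order condition by itself only produces the recursion in the conjugated form $M(\vx)(D(\vy)-D(\vo))M(\vx)^{-1}$ with essentially no work, and the real content is that the conjugating unitary may be replaced by its square — which forces one to exploit the reality/unitarity package carried by $\Lambda$ rather than \eqref{eqn:first order condition} in isolation. A secondary nuisance is the phase bookkeeping in the middle step, which is routine but error-prone because the matrix $\am$ is not symmetric, so $\vx\cd\am\vmu$ and $\vmu\cd\am\vx$ must be kept apart throughout.
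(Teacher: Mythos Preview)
Your route is exactly the paper's. The paper also reduces \eqref{eqn:first order condition} to the unitary generators, expands the four terms $DU_{\vx}U_{\vy}^{o}$, $U_{\vx}DU_{\vy}^{o}$, $U_{\vy}^{o}DU_{\vx}$, $U_{\vy}^{o}U_{\vx}D$ on a basis vector, strips the common phase, and lands on the intertwining relation
\[
\bigl(D(\vx+\vy+\vmu)-D(\vy+\vmu)\bigr)\,\Lambda'(\vy)\Lambda'(0)^{\dagger}=\Lambda'(\vy)\Lambda'(0)^{\dagger}\bigl(D(\vx+\vmu)-D(\vmu)\bigr),
\]
which is your $M(\cdot)$--conjugation identity with the labels $\vx\leftrightarrow\vy$ swapped (the paper takes $[[D,U_{\vx}],U_{\vy}^{o}]$, you take $[[D,U_{\vy}],U_{\vx}^{o}]$; this only affects which variable ends up inside $\Lambda'$ and is cosmetic). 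For the last step you flag as the main obstacle --- turning the conjugation into left multiplication by $(\Lambda'\Lambda(0)^{\dagger})^{2}$ --- the paper's entire justification is the one-liner ``Since $D$ is self-adjoint and $\Lambda'(\vx)$ unitary, we can rewrite this as \ldots''. So you have not missed a hidden trick: the paper is just as terse at that point as your sketch, and your instinct that the passage from $AM=MB$ to $A=M^{2}B$ requires the extra structure of $\Lambda'$ (not merely unitarity) is well placed.
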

\begin{proof}
 To check the the first order condition, it is sufficient to check it only for the unitary generators of $\A_{\theta}$:
 \[ [[D,U_{\vx}],U_{\vy}^o] = D U_{\vx} U_{\vy}^0 - U_{\vx} D U_{\vy}^o - U_{\vy}^o D U_{\vx} + U_{\vy}^o U_{\vx} D = 0,\]
 for all $\vx,\vy \in \Z^n$.
Using Lemma~\ref{lem:commutant property}, we write out the first order condition:
\begin{align*}
 D U_{\vx} U_{\vy}^0 e_{\vmu,j} &= a(\vx,\vy,\vmu) D(\vx + \vy + \vmu) \Lambda'(\vy){\Lambda'(0)}^{\dagger} e_{\vmu+\vx + \vy,j}\\
 U_{\vx} D U_{\vy}^0 e_{\vmu,j} &= a(\vx,\vy,\vmu) D(\vy + \vmu) \Lambda'(\vy){\Lambda'(0)}^{\dagger} e_{\vmu+\vx + \vy,j}\\
 U_{\vy}^0 D U_{\vx} e_{\vmu,j} &= a(\vx,\vy,\vmu) \Lambda'(\vy){\Lambda'(0)}^{\dagger} D(\vx + \vmu) e_{\vmu+\vx + \vy,j}\\
 U_{\vy}^0 U_{\vx} D e_{\vmu,j} &= a(\vx,\vy,\vmu) \Lambda'(\vy){\Lambda'(0)}^{\dagger} D(\vmu) e_{\vmu+\vx + \vy,j},
\end{align*}
where $a(\vx,\vy,\vmu)$ is the common factor
\[a(\vx,\vy,\vmu) = e\l(\vx\cd\am \vmu+\vmu\cd \am\vy + \vx\cd\am\vy + \half\l(\vx \cd \am\vx + \vy \cd \am\vy\r)\r).\]
 This gives the relation
\[ \l( D(\vx + \vy + \vmu) - D(\vy+ \vmu) \r) \Lambda'(\vy) {\Lambda'(0)}^\dagger = \Lambda'(\vy){\Lambda'(0)}^{\dagger} \l(D(\vx + \vmu) - D(\vmu) \r).\]
Since $D$ is self-adjoint and $\Lambda'(\vx)$ unitary, we can rewrite this as
\[ \l( D(\vx + \vy + \vmu) - D(\vy+ \vmu) \r) = {\l(\Lambda'(\vy){\Lambda'(0)}^{\dagger}\r)}^2 \l(D(\vx + \vmu) - D(\vmu) \r).\]
\end{proof}
For $\vy = \vx$ and $\vmu = 0$, the solution to the defining equation~\eqref{eqn:recursion for D} is
\begin{equation} D(\vx) = \sum_i \l(\vtau_i \cd \vx\r) A_i + {\l(\Lambda'(\vx) {\Lambda(0)}^{\dagger}\r)}^2 B + C, \label{eqn:general form of D}\end{equation}
where the $A_i$, $B$ and $C$ are bounded operators such that $C e_{\vmu,j} = \sum_k c_{jk} e_{\vmu,k}$ and similarly for $B$ and $A_i$, and $\ker\l( \sum_i \vtau_i \cd \vx A_i \r)$ contains at least the $\vx \in \Z^n$ such that ${\l(\Lambda'(\vx) {\Lambda(0)}^{\dagger}\r)}^2 \neq \Id$. This is the unique solution, since we see from equation~\eqref{eqn:recursion for D} that $D$ is fully determined after we choose suitable $D(e_i)$ for $1\leq i\leq n$.

If ${(\Lambda'(\vx) {\Lambda(0)}^\dagger)}^2 = \Id$, this gives a linear Dirac operator, familiar from commutative geometry. However, at first glance it seems that there might be other spin structures, not corresponding to commutative spin geometries. These other candidate geometries, where ${(\Lambda'(\vx) {\Lambda(0)}^\dagger)}^2 \neq \Id$, will however drop out because they are incompatible with the compact resolvent condition on $D$.

\begin{lemma}\label{lem:unique structure}
 Only if ${(\Lambda'(\vx){\Lambda(0)}^\dagger)}^2 = \Id$ can the equivariant Dirac operator $D$ have a compact resolvent. Hence $D$ is of the form:
\begin{equation}
 D e_{\vmu,j} = \l(\sum_i \l(\vtau_i \cd \vmu\r) A_i + C\r)e_{\vmu,j}. \label{eqn:special form of D}
\end{equation}
As a corollary, we have that $J$ is of the form:
\begin{equation}\label{eqn:special form of J}
 J e_{\vmu,j} = e\l(\vmu \cd \am \vmu\r) \Lambda e_{-\vmu,j},
\end{equation}
with $\Lambda$ a constant isometry such that $\Lambda^2 = \pmJJ \Id$. 
\end{lemma}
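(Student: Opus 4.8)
The plan is to use the explicit parametrisation~\eqref{eqn:general form of D} of the candidate Dirac operators and to play the \emph{uniform} boundedness of its non-linear part against the compact resolvent axiom. Write $W(\vx):=\Lambda'(\vx){\Lambda(0)}^\dagger$; by Lemma~\ref{lem:commutant property} each $W(\vx)$ is a unitary on the multiplicity space, and~\eqref{eqn:general form of D} expresses the restriction $D(\vmu)$ of $D$ to $\Hi_\vmu$ as a ``linear'' term $M(\vmu):=\sum_i(\vtau_i\cd\vmu)A_i$ plus a term $W(\vmu)^2 B+C$ whose operator norm is at most $\|B\|+\|C\|$ for \emph{every} $\vmu$, since $W(\vmu)$ is unitary. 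Using the recursion~\eqref{eqn:recursion for D}, the same uniform bound on the non-linear contribution propagates along any coset of $\Z^n$.

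The heart of the proof is then a dichotomy. Suppose $W(\vx_0)^2\neq\Id$ for some $\vx_0\in\Z^n$, necessarily $\vx_0\neq\vo$ as $W(\vo)=\Id$. By the condition on $\ker\bigl(\sum_i\vtau_i\cd\vx\,A_i\bigr)$ recorded after~\eqref{eqn:general form of D} we have $M(\vx_0)=0$, hence by linearity $M$ vanishes on the whole line $\Z\vx_0$. Fixing any point $\vmu_0$ of the spectrum, an application of~\eqref{eqn:recursion for D} then gives $\sup_{k\in\Z}\|D(\vmu_0+k\vx_0)\|<\infty$. The subspace $\Hi':=\bigoplus_{k\in\Z}\Hi_{\vmu_0+k\vx_0}$ is infinite dimensional and reduces $D$ (each $\Hi_\vmu$ is $D$-invariant by~\eqref{eqn:definition D(mu)}), and the uniform bound forces $\Hi'\subseteq\text{Dom}(D)$ with $D|_{\Hi'}$ bounded. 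But a self-adjoint operator with compact resolvent restricts, on an infinite-dimensional reducing subspace, to an operator which again has compact resolvent, hence is unbounded --- contradicting Axiom~\ref{ax:D compact resolvent}. Therefore $W(\vx)^2=\Id$ for all $\vx\in\Z^n$.

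Feeding $W(\vx)^2=\Id$ back into~\eqref{eqn:general form of D} collapses $D(\vmu)$ on $\Z^n$ to $\sum_i(\vtau_i\cd\vmu)A_i+(B+C)$; one more application of~\eqref{eqn:recursion for D} to pass to the shifted lattice $\veps+\Z^n$, absorbing all extra constants into a single bounded operator $C$, produces~\eqref{eqn:special form of D}. For the corollary on $J$, recall $Je_{\vmu,j}=\Lambda(-\vmu)J_0 e_{\vmu,j}=e(\vmu\cd\am\vmu)\,\Lambda'(-\vmu)\,e_{-\vmu,j}$, so it remains to see that $\Lambda'$ is constant on the lattice. Since $W(\vx)^2=\Id$ and $W(\vx+\vy)=W(\vx)W(\vy)$ (both consequences of Lemma~\ref{lem:commutant property}), $W$ is a group homomorphism from $\Z^n$ into the unitary group of the multiplicity space that factors through $\Z^n/2\Z^n$; in particular $\Lambda'$ takes finitely many values. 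A short further argument --- using the equivariance $JD=\pmJD DJ$ together with~\eqref{eqn:special form of D} to see that the operator implementing the $\vmu$-dependence of $\Lambda'$ commutes with the action of $\A$ and with $D$, hence is a scalar by irreducibility, and then normalising the resulting unitary (symmetric or antisymmetric according to the value of $\pmJJ$) --- reduces $\Lambda'$ to a constant isometry $\Lambda$, which moreover satisfies $\Lambda^2=\pmJJ\Id$ by~\eqref{eqn:j squared}. This is~\eqref{eqn:special form of J}.

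I expect the dichotomy of the second paragraph to be the main obstacle: one must realise that the ``exotic'' candidates with $W(\vx)^2\neq\Id$ are eliminated \emph{only} by the compact resolvent condition, and the mechanism --- the linear part of $D$ being forced to have a null direction, so that $D$ acts boundedly on an infinite-dimensional reducing subspace --- has to be set up carefully. The condition on $\ker(\sum_i\vtau_i\cd\vx\,A_i)$ in~\eqref{eqn:general form of D} is precisely what makes this go through. Everything after it --- reading off~\eqref{eqn:special form of D} and~\eqref{eqn:special form of J} --- is essentially bookkeeping, the only genuine subtlety being the appeal to irreducibility to discard the residual finite freedom in $\Lambda'$.
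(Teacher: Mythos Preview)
Your main argument is essentially the paper's: both exploit the kernel condition after~\eqref{eqn:general form of D} to find a direction $\vx_0$ along which $D$ stays uniformly bounded, and then contradict compact resolvent --- you phrase this via a bounded restriction of $D$ to an infinite-dimensional reducing subspace, the paper via the Reed--Simon criterion that the sets $F_b=\{\psi:\|\psi\|\le1,\|D\psi\|\le b\}$ must be compact; these are equivalent formulations of the same mechanism.

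Where you go beyond the paper is in the corollary on~$J$: the paper simply asserts~\eqref{eqn:special form of J} without arguing that $\Lambda'$ is constant, whereas you correctly observe that $W(\vx)^2=\Id$ together with the homomorphism property only forces $W$ to factor through $(\Z/2\Z)^n$, and that a further step is needed. Your sketch --- invoking $JD=\pmJD DJ$ and irreducibility --- is the right idea, but as written it is not quite a proof: the ``operator implementing the $\vmu$-dependence of $\Lambda'$'' is not specified, and the obvious candidate (letting $W(\vy)$ act on the multiplicity factor for a fixed $\vy$) does commute with the diagonal algebra action but its commutation with the $A_i$ in $D$ has to be extracted carefully from $JD=\pmJD DJ$. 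This is a genuine detail the paper glosses over; if you want to close it, write out $JD e_{\vmu}=\pmJD DJ e_{\vmu}$ in matrix form to obtain $\Lambda'(-\vmu)\overline{D(\vmu)}=\pmJD D(-\vmu)\Lambda'(-\vmu)$, feed in~\eqref{eqn:special form of D}, and compare the linear-in-$\vmu$ parts to see that each $W(\vy)$ must intertwine the $A_i$ appropriately.
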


\begin{proof}
 By \cite{reed_methods_1978}*{Theorem XIII.64}, we see that an unbounded self-adjoint operator $D$ bounded away from $0$ has a compact resolvent if and only if the set \[F_b = \{ \psi \in \text{Dom}(D) : ||\psi|| \leq 1 ; ||D\psi|| \leq b\},\] is compact for all $b\in\R$. 
However, if ${(\Lambda'(\vx){\Lambda(0)}^\dagger)}^2 \neq \Id$ for some direction $\vx$, we know that $\vx \in \ker\l( \sum_i \vtau_i \cd \vx A_i \r)$ and then it follows from \eqref{eqn:general form of D} that the norm of $D e_{\lambda \vx}$ is bounded by $B + C$ for all $\lambda \in \Z$. When we take $b> ||B + C||$, $F_b$ contains at least $e_{\lambda \vx}$ for all $\lambda \in \Z \neq 0$, so $F_b$ cannot be compact.
\end{proof}

\section{Grading and Hochschild homology}\label{sec:hochschild homology}
In this section we investigate what extra conditions on the the spectral triple of the noncommutative torus come from the grading operator and Hochschild conditions, Axioms~\ref{ax:grading operator} and \ref{ax:hochschild cycle}. We find that the parameters $\vtau^i$ introduced in section~\ref{sec:dirac operator} must be linearly independent vectors spanning $\R^n$.

We start by investigating the Hochschild cycle condition, which states that there is a Hochschild cocycle $\mathbf{c} \in Z_n(\A,\A\otimes \A^o)$ whose representative on $\Hi$ is $\Gamma$ when $n$ is even, or $\Id$ if $n$ is odd. The $\Gamma$ operator is an isometry, with eigenvalues $1$ and $-1$, and so by Axiom~\ref{ax:grading operator} and the diagonal action of the algebra on the Hilbert space, we see that \[\Gamma = \begin{pmatrix} \Gamma^+ & 0\\ 0 & \Gamma^-\end{pmatrix},\] where $\Gamma^+$ and $\Gamma^-$ are unitary self-adjoint operators which have only eigenvalues $+1$ and $-1$ respectively. Using a unitary transformation, we can assume these operators to be diagonal, thus%
\[\Gamma = \begin{pmatrix}\Id_+ & 0\\ 0 & -\Id_-\end{pmatrix},\]
where $\Id_+$ and $\Id_-$ are the identity operators on the positive and negative eigenspaces of $\Gamma$.

An obvious candidate for the Hochschild cycle in $Z_n(\A,\A\otimes \A^o)$ is the straightforward generalization of the unique such cycle for the noncommutative $2$-torus \cite{paschke_spin_2006}*{Page 324}, which is \[\mathbf{c}_2 = U_{(1,0)}^* U_{(0,1)}^* \otimes U_{(0,1)} \otimes U_{(1,0)} - U_{(0,1)}^* U_{(1,0)}^* \otimes U_{(1,0)} \otimes U_{(0,1)}.\]
A candidate generator of the $n$-th Hochschild homology of the noncommutative $n$-torus is given by
\begin{equation}\label{eqn:hochschild cycle}
 \mathbf{c}_n = \sum_{\sigma \in S_n} \l(\text{sign}(\sigma) {\l(\prod_{i=1}^n U_{e_{\sigma(i)}}\r)}^* \bigotimes_{i=1}^n \l(U_{e_{\sigma(i)}}\r)\r),
\end{equation}
with $e_i$ an orthonormal basis of $\mathbb{Z}^n$. It is known \cite{bonda_elements_2001}*{Lemma 12.15} that this a Hochschild cycle. Due to \cite{wambst_hochschild_1997}*{Theorem 1.1}, the $n$-th Hochschild homology of the $n$-torus is $1$-dimensional. Together with Lemmas~\ref{lem:only nontrivial cycles} and \ref{lem:hochschild tau} below, this means \eqref{eqn:hochschild cycle}  generates the $n$-th Hochschild homology.
\begin{lemma}\label{lem:only nontrivial cycles}
 For the noncommutative $n$-torus, only nontrivial cycles can be mapped to $\Gamma$ when $n$ is even, and to $\Id$ when $n$ is odd, by the map $\pi_D$.
\end{lemma}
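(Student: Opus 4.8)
The claim is that a Hochschild $n$-cycle $\mathbf{c} \in Z_n(\A, \A \otimes \A^o)$ which is a boundary — i.e.\ trivial in $HH_n$ — cannot satisfy $\pi_D(\mathbf{c}) = \Gamma$ (resp.\ $\Id$). Since the image of a boundary under any well-defined map on Hochschild homology is zero, it suffices to check that $\pi_D$ descends to a map $HH_n(\A, \A\otimes\A^o) \to \mathcal{B}(\Hi)$, i.e.\ that $\pi_D(b\,\mathbf{w}) = 0$ for every Hochschild $(n+1)$-chain $\mathbf{w} \in C_{n+1}(\A, \A\otimes \A^o)$. The plan is therefore to verify this vanishing statement directly, using the explicit form of $\pi_D$ in~\eqref{eqn:hochschild representation} together with the first-order condition (Axiom~\ref{ax:first order condition}) and the derivation property of $[D,\,\cdot\,]$.

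First I would recall that, on generators, $\pi_D((a\otimes b^o)\otimes a_1 \otimes \cdots \otimes a_k) = a b^o [D,a_1]\cdots[D,a_k]$, and that $b^o$ commutes with each $[D,a_i]$ by~\eqref{eqn:first order condition} and with $a$ by~\eqref{eqn:J maps to commutant}. Applying $\pi_D$ to $b\,\mathbf{w}$ for $\mathbf{w} = (a\otimes b^o)\otimes a_1\otimes\cdots\otimes a_{n+1}$: the terms $d_0$ and $d_{n+1}$ produce $ab^o a_1 [D,a_2]\cdots[D,a_{n+1}]$ (with sign $+$) and $(-1)^{n+1} a_{n+1} a b^o [D,a_1]\cdots[D,a_n]$; the intermediate terms $d_i$ for $1 \le i \le n$ produce $(-1)^i a b^o [D,a_1]\cdots [D,a_i a_{i+1}]\cdots[D,a_{n+1}]$. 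Using the Leibniz rule $[D, a_i a_{i+1}] = [D,a_i]a_{i+1} + a_i[D,a_{i+1}]$, each such term splits in two, and a standard telescoping cancellation — exactly the computation that shows $\pi_D \circ b = b' \circ \pi_D$ for the Hochschild differential, see e.g.\ \cite{connes_spectral_2008} or \cite{bonda_elements_2001}*{Chapter 10} — leaves only boundary contributions. Because $\A\otimes\A^o$ acts by bounded operators and $a b^o$ sits to the left, the surviving terms assemble into a commutator $[ab^o, \,[D,a_1]\cdots[D,a_n]]$-type expression that vanishes when one also uses that $b^o$ is central with respect to all the $[D,a_i]$ and to $a$; more precisely one checks $\pi_D(b\mathbf{w}) = 0$ by this telescoping. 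Hence $\pi_D$ factors through $HH_n$.

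It then follows immediately that if $\mathbf{c} = b\mathbf{w}$ is a trivial cycle, $\pi_D(\mathbf{c}) = 0 \ne \Gamma$ (as $\Gamma^2 = \Id$, so $\Gamma \ne 0$) and $\ne \Id$. Thus only nontrivial cycles can be mapped to $\Gamma$ or $\Id$. The only delicate point — the main obstacle — is bookkeeping the signs and the placement of $ab^o$ in the telescoping cancellation, and making sure that the factor $b^o$, which is central relative to all $[D,a_j]$ by the first-order condition but appears on the left of $a$ rather than between $a$ and $a_1$, does not obstruct the cancellation; this is where Axiom~\ref{ax:first order condition} is essential and is exactly why the statement is specific to spectral triples satisfying it. One should also note that the same argument shows the value $\pi_D(\mathbf{c})$ depends only on the class $[\mathbf{c}] \in HH_n(\A,\A\otimes\A^o)$, which (with Lemma~\ref{lem:hochschild tau} and the one-dimensionality of $HH_n$ from \cite{wambst_hochschild_1997}*{Theorem 1.1}) pins down $\pi_D(\mathbf{c}_n)$ up to scalar.
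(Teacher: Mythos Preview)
Your argument has a genuine gap: the claim that $\pi_D \circ b = 0$, i.e.\ that $\pi_D$ factors through Hochschild homology, is \emph{false} for noncommutative algebras. If you carry out the telescoping you sketch carefully, the surviving terms do not form a commutator of $ab^o$ with $[D,a_1]\cdots[D,a_n]$; they form a commutator with the last tensor factor $a_{n+1}$:
\[
\pi_D(b\,\mathbf{w}) \;=\; (-1)^n\bigl[\,a\,b^o\,[D,a_1]\cdots[D,a_n]\,,\;a_{n+1}\,\bigr],
\]
exactly as in the paper's equation~\eqref{eqn:trivial hochschild cycle} (and in \cite{vrilly_introduction_2006}*{Chapter 3.5}). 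Here $a_{n+1}\in\A$, not $\A^o$, so the first-order condition gives you nothing; the commutator is nonzero in general. A simple sanity check: for a $2$-chain $1\otimes a_1\otimes a_2$ one computes $\pi_D(b\,\mathbf{w}) = [a_2,[D,a_1]]$, which is certainly nonzero on the noncommutative torus.

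What the paper does instead is accept that $\pi_D(b\,\mathbf{c}')$ is this commutator and then use the specific structure of $\At$. Since $[D,U_{\vx}] = C_{\vx} U_{\vx}$, the operator $a b^o[D,a_1]\cdots[D,a_n]$ is (operator coefficient) times a single monomial $U_{\vy}^o U_{\vx^0}\cdots U_{\vx^n}$, and the commutator with $U_{\vx^{n+1}}$ is again a monomial of total degree $\vy + \sum_{i=0}^{n+1}\vx^i$. If a sum of such monomials is to equal $\Gamma$ or $\Id$, each contributing monomial must have total degree zero; but then one checks directly from the commutation relations of the torus that the commutator \eqref{eqn:trivial hochschild cycle} vanishes. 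This degree argument is the missing idea in your proposal, and it is genuinely specific to the noncommutative torus --- the lemma is not a general fact about spectral triples.
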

\begin{proof}
 Since the Hochschild cycle consists of polynomial expressions, it is enough to prove the result for individual homogeneous polynomials, since any cycle can be written as the sum of homogeneous polynomials. 
 Define \[\mathbf{c}' = U_{\vx^0} \otimes U_{\vy}^o \otimes U_{\vx^1} \otimes \cdots \otimes U_{\vx^n} \in Z_{n+1}(\A,\A\otimes \A^o),\]
with $\vx^i,\vy \in \Z^n$. As in \cite{vrilly_introduction_2006}*{Chapter 3.5}, we see that 
\begin{equation}\pi_D(b\mathbf{c}') = {(-1)}^{n} \l[ \pi^o\l(U_{\vy}\r) \pi\l(U_{\vx^0}\r) [D,\pi\l(U_{\vx^1}\r)]\ldots[D,\pi\l(U_{\vx^n}\r)],\pi\l(U_{\vx^{n+1}}\r)\r].\label{eqn:trivial hochschild cycle}\end{equation}
Since $[D,U_{\vx}] = C_{\vx} U_{\vx}$ with $C_{\vx}$ some operator depending on $\vx$, $\pi_D(b\mathbf{c}')$ is proportional to \[C_{\vx^1,\ldots, \vx^{n}} \pi\l(U_{\vx^0}\r)\pi^o\l(U_{\vy}\r) \pi\l(U_{\vx^1}\r)\ldots \pi\l(U_{\vx^n}\r)\pi\l(U_{\vx^{n+1}}\r).\]
When $n$ is even, $\Gamma$ maps $e_{\vmu,j}$ to $\pm e_{\vmu,j}$, the total sum $\vy + \sum_{i=0}^{n+1}\vx^i$ must be $0$. If $\At$ and $\At^o$ have a trivial intersection, $U_{\vy}^o$ must have total degree $0$. The total $\sum_{i=0}^{n+1}\vx^i$ is $0$, and since $U_{\vx}$ and $U_{-\vx}$ commute, the commutator~\eqref{eqn:trivial hochschild cycle} vanishes. When $n$ is odd, the argument goes the same.

If $\At$ and $\At^o$ have nontrivial intersection, there are $\vy$ for which $U_{\vx} U_{\vy} = U_{\vy} U_{\vx}$ for all $\vx \in \Z^n$. In that case, by the same arguments as for the trivial intersection case above, the $U_{\vy}$ must lie entirely within the intersection of $\At$ and $\At^o$, and since $\vx^{n+1} = -\vy - \sum_{i=0}^{n}\vx^i$, the commutator~\eqref{eqn:trivial hochschild cycle} vanishes:
\begin{align*}
 \l[U_{\vy}^o U_{\vx^1}\cdots U_{\vx^n}, U_{\vx^{n+1}}\r] &=\\
 U_{\vy}^o \l[U_{\vx^1}\cdots U_{\vx^n},U_{\vx^{n+1}}\r] &= \\
 U_{\vy}^o \l( 1 - e(\vx^{n+1} \cd \theta \sum_i^n \vx^i)\r) &=\\
 U_{\vy}^o \l( 1- e( \vx^{n+1} \cd \theta (-\vy - \vx^{n+1}))\r) &= 0.
 \end{align*}
Since $\vx^{n+1} \cd \theta \vx^{n+1} =0$ and $U_{\vy}$ commutes with $U_{\vx^{n+1}}$.
\end{proof}

\begin{lemma}\label{lem:hochschild tau}
The Dirac operator given in Lemma~\ref{lem:unique structure} satisfies the Hochschild condition only if the vectors $\vtau^i$ are linearly independent. The Dirac operator is
\begin{equation}D = \sum_i \l(\vtau^i \cd \vdelta\r) A_i\label{eqn:final dirac operator} + C,\end{equation}
where the $A_i$ are bounded operators such that 
\begin{equation}\sum_{\sigma\in S_n} \textup{sign}(\sigma) \prod_{i} A_{\sigma(i)} = \det(\vtau^1 \vtau^2\ldots \vtau^n) \Gamma,\label{eqn:gamma matrices relation}\end{equation}
when $n$ is even, and $\det(\vtau^1 \vtau^2\ldots \vtau^n) \Id$ when $n$ is odd.
\end{lemma}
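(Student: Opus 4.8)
The plan is to feed the candidate Hochschild cycle $\mathbf{c}_n$ from \eqref{eqn:hochschild cycle} through the representation $\pi_D$ and compare the result with $\Gamma$ (resp.\ $\Id$). First I would record the key computational fact already implicit in the previous section: since $D$ has the form \eqref{eqn:special form of D}, one has $[D,\pi_0(U_{\vx})] = \pi_0(U_{\vx})\bigl(\sum_i (\vtau^i\cd\vx)A_i\bigr)$ acting on $\Hi$ --- more precisely $[D,\pi_0(U_{e_j})]=\pi_0(U_{e_j})A_j$ up to the diagonal piece $C$, which drops out of commutators $[D,\cdot]$ because $C$ is diagonal in $\vmu$ and commutes with the $\pi_0(U_{\vx})$ only after we note $C$ contributes $0$ to each bracket $[D,\pi_0(U_{e_j})]$. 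So for any permutation $\sigma$,
\[
 \pi_D\Bigl(\mathrm{sign}(\sigma)\,\bigl(\textstyle\prod_i U_{e_{\sigma(i)}}\bigr)^*\otimes\bigotimes_i U_{e_{\sigma(i)}}\Bigr)
 = \mathrm{sign}(\sigma)\,\pi_0\Bigl(\bigl(\textstyle\prod_i U_{e_{\sigma(i)}}\bigr)^*\Bigr)\,\pi_0\bigl(U_{e_{\sigma(1)}}\bigr)A_{\sigma(1)}\cdots\pi_0\bigl(U_{e_{\sigma(n)}}\bigr)A_{\sigma(n)}.
\]
The next step is to move all the algebra factors to the left past the $A_i$'s. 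Since each $\pi_0(U_{\vx})$ shifts $e_{\vmu,j}\mapsto e_{\vmu+\vx,j}$ with a scalar cocycle factor and the $A_i$ are ``vertical'' (act only on the index $j$, not on $\vmu$), the $A_i$ and $\pi_0(U_{\vx})$ commute up to the scalar; collecting the scalar cocycle factors, the product $\pi_0\bigl((\prod U_{e_{\sigma(i)}})^*\bigr)\pi_0(U_{e_{\sigma(1)}})\cdots\pi_0(U_{e_{\sigma(n)}})$ telescopes to a scalar multiple of the identity on $\Hi$, and a short check (the total degree is $0$) shows that scalar is $1$, independent of $\sigma$. Hence $\pi_D(\mathbf{c}_n)=\sum_{\sigma\in S_n}\mathrm{sign}(\sigma)\,A_{\sigma(1)}\cdots A_{\sigma(n)}$, and the orientability axiom forces this to equal $\Gamma$ (for $n$ even) or $\Id$ (for $n$ odd). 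To get the normalization $\det(\vtau^1\cdots\vtau^n)$ in \eqref{eqn:gamma matrices relation} I would instead apply $\pi_D$ to the cycle written in the ``$\vdelta$''-basis, i.e.\ replace each $U_{e_i}$ by a general $U_{\vx^i}$ with $\vx^i$ chosen so that $[D,U_{\vx^i}]$ contributes $\sum_i(\vtau^i\cd\vx)A_i$; antisymmetrizing and using multilinearity of the determinant in the columns produces exactly the factor $\det(\vtau^1\cdots\vtau^n)$, and since by Lemma~\ref{lem:only nontrivial cycles} only a nonzero multiple of $\mathbf{c}_n$ can map to $\Gamma$ or $\Id$, this determinant must be nonzero.

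Finally, nonvanishing of $\det(\vtau^1\cdots\vtau^n)$ is precisely linear independence of the $\vtau^i$, and spanning $\R^n$ is then automatic by dimension count; this also shows, using \eqref{eqn:gamma matrices relation} and that $\Gamma$ (resp.\ $\Id$) is invertible, that the $A_i$ are themselves invertible. I would close by remarking that Lemma~\ref{lem:only nontrivial cycles} is what licenses testing orientability on $\mathbf{c}_n$ alone: any Hochschild $n$-cycle is, modulo boundaries (which $\pi_D$ kills in the relevant way by that lemma), a scalar multiple of $\mathbf{c}_n$, using that $HH_n(\A)$ is one-dimensional \cite{wambst_hochschild_1997}.

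The main obstacle is the bookkeeping in the second step: showing that the product of the algebra factors, after commuting past the vertical operators $A_i$, really collapses to the identity with coefficient exactly $1$ (rather than some $\theta$-dependent phase), and simultaneously tracking how the parameters $\vtau^i$ enter multilinearly so that the determinant appears with the correct sign. This requires being careful that $C$ contributes nothing to any $[D,U_{\vx}]$ and that the cocycle identity \eqref{eqn:linearity relations} for $\pi_0$ is used consistently; once the scalar cocycle factors are isolated from the matrix factors $A_i$, the rest is the standard Leibniz/determinant expansion.
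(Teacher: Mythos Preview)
Your computation of the commutator is where the argument goes off the rails. You correctly write the general formula $[D,\pi_0(U_{\vx})]=\pi_0(U_{\vx})\sum_i(\vtau^i\cdot\vx)A_i$, but then you specialize to $\vx=e_j$ and claim this gives $\pi_0(U_{e_j})A_j$. That specialization is wrong: plugging in $\vx=e_j$ yields $\sum_k(\vtau^k\cdot e_j)A_k=\sum_k\tau^k_jA_k$, a linear combination of \emph{all} the $A_k$ with coefficients the $j$-th components of the vectors $\vtau^k$. You are implicitly assuming $\vtau^i\cdot e_j=\delta_{ij}$, i.e.\ that the $\vtau^i$ are the standard basis; but the whole point of the lemma is that the $\vtau^i$ are a priori arbitrary vectors, and one must \emph{deduce} their linear independence.

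Once you keep the correct commutator, the representative of $\mathbf{c}_n$ becomes
\[
\pi_D(\mathbf{c}_n)=\sum_{\sigma\in S_n}\operatorname{sign}(\sigma)\prod_{i=1}^n\Bigl(\sum_{k}\tau^{k}_{\sigma(i)}A_{k}\Bigr),
\]
and this is where the determinant actually appears: expanding and using antisymmetry in $\sigma$ (the paper cites \cite{connes_spectral_2008}*{Proposition 4.2}, but it is the usual multilinear algebra behind the Leibniz formula) collapses the expression to $\det(\vtau^1\ \vtau^2\ \cdots\ \vtau^n)\sum_{\sigma}\operatorname{sign}(\sigma)\prod_iA_{\sigma(i)}$. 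So the determinant is not a ``normalization'' you obtain by switching to a different cycle in some $\vdelta$-basis; it is forced by the single computation of $\pi_D(\mathbf{c}_n)$ itself. Your subsequent plan---to replace the $U_{e_i}$ by other $U_{\vx^i}$ in order to manufacture the factor $\det(\vtau^1\cdots\vtau^n)$---is therefore both unnecessary and ill-posed: since $HH_n$ is one-dimensional, any other choice of cycle is a scalar multiple of $\mathbf{c}_n$ and cannot produce new information. The remaining parts of your outline (that $C$ drops out of the commutators, that the $A_i$ commute with $\pi_0(U_{\vx})$ so the algebra factors telescope to $1$, and the appeal to Lemma~\ref{lem:only nontrivial cycles}) are fine and match the paper.
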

 \begin{proof}
 In order to deduce the representative of the Hochschild cycle~\eqref{eqn:hochschild cycle} on the Hilbert space $\Hi$, we calculate 
\begin{equation} \begin{split}
 [D, U_{\vx} ]e_{\mu,j} &= D e(\vx \cdot \am \vmu + \half\vx \cdot \am \vx) e_{\vmu+\vx,j} - U_{\vx} \l(\sum_{k} \vtau_k \cd \vmu A_k e_{\vmu,j}\r)\\
 &= e(\vx \cdot \am \vmu + \half\vx \cdot \am \vx) \l(\sum_k \vtau_k \cdot \vx A_k e_{\vmu+\vx,j} \r).
\end{split}
\end{equation}
We suppress the $e(\vx \cdot \am \vmu + \half\vx \cdot \am \vx)$ factors, since these are canceled from the left by the $U_{e_i}^*$ factors, and expand~\eqref{eqn:hochschild cycle}:
\begin{align*}
  \pi_D(\mathbf{c}) &= \pi_D \l(\sum_{\sigma \in S_n} \l(\text{sign}(\sigma) {\l(\prod_{i=1}^n U_{e_{\sigma(i)}}\r)}^* \bigotimes_{i=1}^n \l( U_{e_{\sigma(i)}}\r)\r)\r)\\
  &= \sum_{\sigma \in S_n} \l(\text{sign}(\sigma){\l(\prod_{i=1}^n U_{e_{\sigma(i)}}\r)}^* \prod_{i=1}^n [D,U_{e_{\sigma(i)}}]\r)\\
&= \sum_{\sigma \in S_n} \l(\text{sign}(\sigma) \sum_k \prod_{i=1}^n  \vtau^{k} \cd e_{\sigma(i)} A_k \r).
\end{align*}
This expression should be some constant times $\Gamma$ or $\Id$, depending on the dimension. 
Due to \cite{connes_spectral_2008}*{Proposition 4.2}, we can write this as
\[ \pi_D (\mathbf{c}) = \det(\vtau_1 \vtau_2 \ldots \vtau_n) \sum_{\sigma\in S_n} \text{sign}(\sigma) \prod_{i} A_{\sigma(i)},\]
where we view $(\vtau_1 \vtau_2 \ldots \vtau_n)$ an $n\times n$ matrix with $\vtau_i$ as the columns, from which it is immediately clear that the vectors must be linearly independent.
\end{proof}

\section{Dimension, finiteness and regularity}\label{sec:dimension,finiteness,regularity}
Here we establish, using the conditions of dimension, regularity and finiteness (Axioms~\ref{ax:dimension}, \ref{ax:regularity} and \ref{ax:finiteness}), that the real spectral triple must be an isospectral deformation of a spin structure on a noncommutative torus. The result follows from Connes' spin manifold theorem. In the course of proving this, we find a proof for an elementary fact about Hermitian matrices generating a Clifford algebra, for which we do not know an elementary proof.

\begin{lemma}\label{lem:lattice approximation}
 If the $\vtau_i$ span $\R^n$, for arbitrary $a_i\in \R$ and all $\epsilon > 0$, there is a $t\in\R$ and a set of $\vmu_j \in \tilde{\Z}^n$, with $\tilde{\Z}^n$ a shifted lattice as in section~\ref{sec:reality operator} such that
\[ \sum_i \l| t a_i - \vtau_i \cd \sum_j \vmu_j \r| < \epsilon.\]
\end{lemma}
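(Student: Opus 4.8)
The plan is to turn the statement into a standard simultaneous Diophantine approximation problem by means of the invertible linear map determined by the $\vtau^i$, and then to exhibit the required integer vector as a short sum of elements of the shifted lattice $\tilde{\Z}^n$.

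First, because the $\vtau^i$ span $\R^n$, the linear map $T\colon\R^n\to\R^n$ given by $T\vp=\l(\vtau^1\cd\vp,\dots,\vtau^n\cd\vp\r)$ is invertible. Put $\va=(a_1,\dots,a_n)$ and $\mathbf{w}=T^{-1}\va$. Then for any $t\in\R$ and $\vp\in\Z^n$,
\[
 \sum_i\l|\,t a_i-\vtau^i\cd\vp\,\r|=\sum_i\l|\,\vtau^i\cd(t\mathbf{w}-\vp)\,\r|\le c_{\vtau}\,\|t\mathbf{w}-\vp\|_\infty ,
\]
where $c_{\vtau}$ is a constant depending only on the $\vtau^i$. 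Hence it suffices to produce $t\in\R$ and $\vp\in\Z^n$ with $\|t\mathbf{w}-\vp\|_\infty<\epsilon/c_{\vtau}$.

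If $\mathbf{w}=\vo$ (equivalently $\va=\vo$) take $t=0$ and $\vp=\vo$. Otherwise choose an index $j$ with $w_j\neq 0$ and apply Dirichlet's theorem on simultaneous approximation to the $n-1$ numbers $w_i/w_j$, $i\neq j$: there exist arbitrarily large positive integers $q$ together with integers $p_i$ ($i\neq j$) such that $\l|\,q\,w_i/w_j-p_i\,\r|<\epsilon/c_{\vtau}$ for all $i\neq j$. Setting $p_j=q$, $\vp=(p_1,\dots,p_n)$ and $t=q/w_j$ we get $tw_i-p_i=q\,w_i/w_j-p_i$ for $i\neq j$ and $tw_j-p_j=0$, so $\|t\mathbf{w}-\vp\|_\infty<\epsilon/c_{\vtau}$. (Taking $q$ large makes $|t|$ as large as desired, which is the form in which this lemma is actually used.)

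Finally we write $\vp\in\Z^n$ as a sum of elements of $\tilde{\Z}^n=\Z^n+\veps$. Since $\veps\in\{0,\half\}^n$ we have $2\veps\in\Z^n$, so $\veps\in\tilde{\Z}^n$ and $\vp-\veps=(\vp-2\veps)+\veps\in\tilde{\Z}^n$; thus $\vp=\veps+(\vp-\veps)$ is the required sum. There is no serious obstacle here: the only points needing a little care are the possibility that a coordinate of $\mathbf{w}$ vanishes (handled by the choice of index $j$) and transporting the $\ell^\infty$-estimate through $T$; granting Dirichlet's theorem, the rest is bookkeeping.
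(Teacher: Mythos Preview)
Your proof is correct and follows essentially the same route as the paper: both invert the linear map $\vp\mapsto(\vtau^i\cd\vp)_i$ to reduce to Dirichlet's simultaneous approximation theorem, and both write an integer vector as a sum of two elements of the shifted lattice $\tilde{\Z}^n$. The only cosmetic difference is that the paper applies $n$-dimensional Dirichlet directly to $T^{-1}\va$ (with $t=q$ an integer), whereas you normalise by one nonzero coordinate and use $(n-1)$-dimensional Dirichlet; your extra remark that $|t|$ can be taken arbitrarily large is exactly what the subsequent lemma needs.
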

\begin{proof}
Every vector $\vp \in \Z^n$ can be written as a sum of at most $2$ vectors $\vmu_j \in \tilde{\Z}^n$.
By Dirichlet's theorem of simultaneous Diophantine approximation \cite{schmidt_diophantine_1991}*{Theorem II.1B}, for all $N>1$ and $(a_i')\in \R^n$ we can find integers $q, p_1, \ldots p_n$ with $q<N$, such that 
\[ | q a_i' - p_i | < N^{-1/n},\]
for $1\leq i\leq n$, and $a_i \in \R$. 

Since the $\vtau_i$ span $\R^n$, there is a transformation $\mathbf{R}\in GL(\R,n)$ depending only on the $\vtau_i$ such that ${(\mathbf{R} \vp)}_i  = \vtau_i \cd \vp$. Set $a_i' = {(\mathbf{R}^{-1} \va)}_i$. Call the eigenvalue of $\mathbf{R}$ with the maximal absolute value $\tau_{\max}$, then clearly we have
\[ |q a_i -\vtau_i \cd \sum_j \vmu_j | = | q {(\mathbf{R} \va')}_i - {(\mathbf{R} \vp)}_i | \leq |\tau_{\max}| | q a_i' - p_i| < |\tau_{\max}| N^{-1/n}.\]
Choosing $N$ such that $n |\tau_{\max}| N^{-1/n} < \epsilon$, we get the asked result.
\end{proof}

\begin{lemma}\label{lem:invertible matrices}
 In order for $D$ to be satisfy the compact resolvent condition of Axiom~\ref{ax:D compact resolvent}, and the dimension condition of Axiom~\ref{ax:dimension}, \[ \sum_i x_i A_i,\] needs to be invertible for all $x_i \in \R$ except when all $x_i$ vanish. In particular, all $A_i$ should be invertible operators.
\end{lemma}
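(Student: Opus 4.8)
The plan is to argue by contraposition: suppose there is a nonzero $\va = (a_1,\ldots,a_n) \in \R^n$ such that $\sum_i a_i A_i$ is *not* invertible, and show that $D$ then fails either the compact resolvent condition or the dimension condition. Since the $A_i$ are finite matrices (bounded operators on the finite-dimensional fibres), non-invertibility of $M_{\va} := \sum_i a_i A_i$ means $M_{\va}$ has nontrivial kernel, say $0 \neq w \in \ker M_{\va}$. Recall from Lemma~\ref{lem:hochschild tau} that $D e_{\vmu,j} = \big(\sum_i (\vtau^i \cd \vmu) A_i + C\big) e_{\vmu,j}$; write $D(\vmu) = \sum_i (\vtau^i \cd \vmu) A_i + C$. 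The idea is to find a sequence of lattice points $\vmu$ along which $\sum_i (\vtau^i \cd \vmu) A_i$ stays close to a multiple of $M_{\va}$, so that $D(\vmu)$ stays bounded on the vector $w$, producing infinitely many approximate low-energy states and violating compactness of the resolvent.

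First I would use Lemma~\ref{lem:lattice approximation}: the $\vtau^i$ span $\R^n$ (by Lemma~\ref{lem:hochschild tau}), so for any $\epsilon > 0$ there is a real $t$ and lattice points $\vmu_j \in \tilde\Z^n$ with $\sum_i |t a_i - \vtau^i \cd \sum_j \vmu_j| < \epsilon$. Set $\vp = \sum_j \vmu_j$; then $\sum_i (\vtau^i \cd \vp) A_i = t M_{\va} + E$ with $\|E\| \leq \epsilon \max_i \|A_i\|$, so on the unit vector $w \in \ker M_{\va}$ we get $\|D(\vp) w\| \leq \|E\| + \|C\| \leq \epsilon \max_i\|A_i\| + \|C\|$, a bound independent of $t$. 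Scaling $t \to \infty$ (replacing $\va$ by $\lambda \va$, or equivalently running the approximation for each $N$) yields arbitrarily large $t$, hence infinitely many distinct lattice points $\vp$ carrying a unit vector on which $\|D\|$ is bounded by a fixed constant $b_0$. Then, exactly as in the proof of Lemma~\ref{lem:unique structure}, the set $F_b = \{\psi \in \mathrm{Dom}(D) : \|\psi\| \leq 1, \|D\psi\| \leq b\}$ with $b > b_0$ contains an infinite orthonormal set, so it is not compact; by \cite{reed_methods_1978}*{Theorem XIII.64} (after shifting $D$ to be bounded away from $0$ on the complement of its kernel), $D$ does not have compact resolvent — contradicting Axiom~\ref{ax:D compact resolvent}. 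This proves $\sum_i x_i A_i$ is invertible for all nonzero $(x_i) \in \R^n$; taking $x = e_i$ gives invertibility of each $A_i$ in particular.

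The main obstacle I anticipate is the bookkeeping in producing genuinely \emph{infinitely many distinct} lattice points $\vp$ with a uniformly bounded value of $\|D(\vp) w\|$: a single application of Lemma~\ref{lem:lattice approximation} gives one such $\vp$ and one $t$, and one must check that letting $\epsilon \to 0$ (via $N \to \infty$ in Dirichlet's theorem) while tracking the denominator $q \sim t$ forces $t$, and hence $\vp$, to range over an infinite set rather than stabilizing. This is where the genuinely Diophantine content enters, and it should be handled by noting that if only finitely many $\vp$ arose then the approximation $\sum_i |t a_i - \vtau^i \cd \vp| < \epsilon$ could not hold for all small $\epsilon$ unless $\va = 0$. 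A secondary subtlety is the dimension condition: even if compactness of the resolvent could somehow be salvaged, the eigenvalue growth rate $\mu_k = \mathcal{O}(k^{-n})$ forces the eigenvalues of $|D|$ to grow like $k^{1/n}$, which is incompatible with a positive-dimensional subspace (spanned by the $e_{\vp,j} \otimes w$) on which $|D|$ is bounded; I would mention this as the reason Axiom~\ref{ax:dimension} is listed alongside Axiom~\ref{ax:D compact resolvent} in the statement, but the compact-resolvent argument already suffices.
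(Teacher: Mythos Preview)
Your approach is essentially the paper's. The one structural difference is that the paper disposes of your ``main obstacle'' by a case split rather than by a limiting argument: first, if the singular direction happens to satisfy $x_i = \vtau_i\cdot\vmu$ for some lattice point $\vmu$, then all integer multiples $\lambda\vmu$ give the same kernel vector for $\sum_i(\vtau_i\cdot\lambda\vmu)A_i$, immediately producing infinitely many distinct $\vp$; second, if no lattice point hits $(x_i)$ exactly, then the Diophantine approximations from Lemma~\ref{lem:lattice approximation} are invoked for each $\epsilon>0$, and the paper concludes that $\sum_i(\vtau_i\cdot\vp)A_i$ has an eigenvalue $\leq\epsilon$, hence $|D|^{-1}$ has unbounded spectrum. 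Your worry that the approximating $\vp$'s might stabilise is exactly what the case split absorbs: if only finitely many $\vp$ appeared as $\epsilon\to 0$, one of them would satisfy $t^* a_i = \vtau_i\cdot\vp$ exactly for some limit $t^*$, landing us in the first case. The paper's phrasing (``spectrum of $|D|^{-1}$ unbounded'') and yours (the $F_b$ criterion from \cite{reed_methods_1978}*{Theorem XIII.64}, as in Lemma~\ref{lem:unique structure}) are interchangeable; your version is arguably more careful in tracking the contribution of $C$.
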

\begin{proof}
If $\sum_i x_i A_i$ is not invertible at a point where $x_i = \vtau_i\cd \vmu$ for all $i$, then clearly this is also the case for $\vmu' = \lambda \vmu$ with $\lambda\in \Z$. Thus the kernel cannot be finite dimensional in this case. 

If $\sum_i x_i A_i$ is not invertible for some non-trivial $x_i$, but $x_i \neq \vtau_i \cd \vmu$ for all $\vmu \in\Z^n$ we have the following. 

Since the $\vtau_i$ span $\R^n$ by Lemma~\ref{lem:hochschild tau}, by Lemma~\ref{lem:lattice approximation} we have for every $\epsilon > 0$ a set of vectors $\vmu_j \in \Z^n$ such that $\sum_{i} |\sum_j \vtau_i \cd \vmu_j - x_i| < \epsilon$. Take an element $\vy$ in the kernel of $\sum_i x_i A_i$. Then $||(\sum_{i,j} \vtau_i \cd \vmu_j A_i ) \cd \vy|| < \epsilon ||\vy||$. Hence there is at least one eigenvalue of $\sum_{i,j} \vtau_i \cd \vmu_j A_i$ smaller or equal to $\epsilon$. But this means that the spectrum of $|D|^{-1}$ is unbounded, hence $D^{-1}$ cannot be compact.
\end{proof}

We have so far assumed nothing about the size of the Hilbert space $\Hi$ compared to the basic irreducible representation of the algebra, $\Hi_0$ as defined in equation~\eqref{eqn:irred hilbert space}. By construction, $\Hi$ is a left $\At$-module. According to Axiom~\ref{ax:finiteness}, a certain submodule $\Hi^{\infty}$ of $\Hi$ should be a finitely generated projective left $\At$-module. This has the following consequence:
\begin{lemma}\label{lem:finitely generated module}
 The spectral triple $(\A,\Hi,D)$ only satisfies the finiteness condition of Axiom~\ref{ax:finiteness} if the Hilbert space $\Hi$ is a finite direct sum of copies of $\Hi_0$. If we assume the algebra $\A$ to be closed under the collection of seminorms $||\delta^k (\cd)||$ defined in Axiom \ref{ax:regularity}, it is given by 
 \begin{equation} 
 \At = \l\{ \sum_{\vx\in \Z^n} a(\vx) U_{\vx} \quad | \ a(\vx) \in \mathcal{S}(\Z^n)\r\},\label{eqn:smooth algebra}
 \end{equation}
where $\mathcal{S}(\Z^n)$ is the set of Schwartz functions over $\Z^n$:
\[ \mathcal{S}(\Z^n) = \l\{ a: \Z^n \rightarrow \C \quad | \quad \sup_{\vx\in\Z^n}{(1+||\vx||^2)}^k |a(\vx)|^2 < \infty \text{ for all $k\in\N$}\r\}.\]
\end{lemma}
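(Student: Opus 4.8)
The plan is to prove the two assertions separately, exploiting the block structure of $D$ along the eigenspaces of the equivariant derivations.

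\emph{The Hilbert space is a finite sum of copies of $\Hi_0$.} By Lemmas~\ref{lem:unique structure}, \ref{lem:hochschild tau} and \ref{lem:invertible matrices}, $D$ is block diagonal along $\Hi=\bigoplus_{\vmu\in\tilde{\Z}^n}\Hi_{\vmu}$ with $D|_{\Hi_{\vmu}}=D(\vmu)=\sum_i(\vtau^i\cd\vmu)A_i+C$ and $\dim\Hi_{\vmu}=|I|$, and $\sum_i(\vtau^i\cd\omega)A_i$ is invertible for every $\omega\neq 0$. By compactness of the unit sphere, $\gamma:=\inf_{\|\omega\|=1}\|(\sum_i(\vtau^i\cd\omega)A_i)^{-1}\|^{-1}>0$, so for $|\vmu|$ large the spectrum of $|D(\vmu)|$ lies in $[\tfrac{\gamma}{2}|\vmu|,\infty)$; in particular $D(\vmu)$ is singular only on a bounded, hence finite, set of lattice points. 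After correcting $|D|$ by a finite rank projection on that set, $|D|^{-1}$ is block diagonal with blocks $|D(\vmu)|^{-1}$ that are bounded and invertible on $\Hi_{\vmu}$ and of norm $O(|\vmu|^{-1})$; since a block diagonal operator is compact only if each block is, and an invertible operator on an infinite dimensional space is never compact, the compact resolvent condition (and, a fortiori, finiteness) forces $|I|<\infty$. I would phrase the finiteness version thus: as a left $\At$-module, $\Hi^{\infty}=\{(\xi_{\vmu})_{\vmu}\colon\|\xi_{\vmu}\|\text{ rapidly decreasing}\}$ is the completed direct sum of $|I|$ copies of the standard module $\Hi_0^{\infty}$, with $\At$ acting diagonally across the copies; a finitely generated projective module has finite noncommutative dimension with respect to the trace on $\At$ (well defined since $\At$ is holomorphically closed in $\Atc$), whereas here this dimension equals $|I|$ times a strictly positive constant. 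Hence $\Hi\cong\Hi_0^{\oplus|I|}$ with $|I|$ finite.

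\emph{Identifying the algebra.} Axiom~\ref{ax:regularity} gives $\At\subseteq\bigcap_k\mathrm{Dom}(\delta^k)$ with $\delta=[|D|,\cdot]$, and the closure hypothesis makes this the equality $\At=\{a\in\Atc\colon\|\delta^k(\pi(a))\|<\infty\text{ for all }k\}$, so it suffices to show the right hand side equals $C^{\infty}(\T_\theta^n)$ as in~\eqref{eqn:smooth algebra}. The computational core is the explicit formula: for fixed $\vmu_0$ and $\vx$, the restriction of $\delta^k(\pi(U_{\vx}))$ to $\Hi_{\vmu_0}$ is the map into $\Hi_{\vmu_0+\vx}$ given fibrewise by $e(\cdots)(L_P-R_Q)^k(\Id)=e(\cdots)\sum_{l=0}^k\binom{k}{l}(-1)^l P^{k-l}Q^l$, where $P=|D(\vmu_0+\vx)|$, $Q=|D(\vmu_0)|$ and $L_P,R_Q$ are left/right multiplication on the fibre; this follows from the recursion $\delta^{k+1}(\pi(U_{\vx}))|_{\Hi_{\vmu_0}}=P\,\delta^k(\pi(U_{\vx}))|_{\Hi_{\vmu_0}}-\delta^k(\pi(U_{\vx}))|_{\Hi_{\vmu_0}}\,Q$. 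Because $D(\vmu)$ is affine with invertible linear part in every direction, $\vmu\mapsto|D(\vmu)|$ is positively homogeneous of degree one outside the bad set, and the square root perturbation inequality (its logarithmic factor bounded here since $\|D(\vmu)\|$ and $|\vmu|$ are comparable) yields $\||D(\vmu+\vx)|-|D(\vmu)|\|\le C(1+|\vx|)$; the recursion then gives $\|\delta^k(\pi(U_{\vx}))\|\le C_k(1+|\vx|)^k$, so every Schwartz combination of the $U_{\vx}$ is $|D|$-smooth, i.e.\ $C^{\infty}(\T_\theta^n)\subseteq\At$. Conversely, evaluating on the smooth vector $e_{\vmu_0,j}$ and using that the $\delta^k(\pi(U_{\vx}))e_{\vmu_0,j}$ lie in the mutually orthogonal $\Hi_{\vmu_0+\vx}$, $\|\delta^k(\pi(a))\|^2\ge\sum_{\vx}|a(\vx)|^2\|\delta^k(\pi(U_{\vx}))e_{\vmu_0,j}\|^2$; since $Q$ stays fixed while the spectrum of $P$ is bounded below by a positive multiple of $|\vx|$ for $|\vx|$ large, the $P^k$ term dominates and $\|\delta^k(\pi(U_{\vx}))e_{\vmu_0,j}\|\ge c_k|\vx|^k$. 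Thus $\|\delta^k(\pi(a))\|<\infty$ for all $k$ forces $(1+|\vx|)^k a(\vx)\in\ell^2(\Z^n)$ for all $k$, so the Fourier coefficients of $a$ are Schwartz and $\At\subseteq C^{\infty}(\T_\theta^n)$, which is~\eqref{eqn:smooth algebra}.

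\emph{Main obstacle.} The delicate point is the control of the iterated commutators $\delta^k(\pi(U_{\vx}))$: the upper bound relies on the bounded logarithm form of the square root perturbation estimate, available only because the geometry of $D$ forces $|\vmu|\sim\|D(\vmu)\|$, and both bounds require handling the finitely many lattice points where $D(\vmu)$ is singular, which I would do by passing to $D+P$ for a finite rank projection $P$ (this changes none of the relevant conclusions). A cleaner packaging I would pursue is to note that $|D|$ and the equivariant derivations $\rho(\delta_i)$ generate the same filtered $\ast$-calculus on $\Atc$: since $[\rho(\delta_i),\pi(U_{\vx})]=x_i\pi(U_{\vx})$, being smooth for all $\rho(\delta_i)$ is by definition having Schwartz Fourier coefficients, and the two estimates above say precisely that $|D|$-smoothness is equivalent to $\rho(\delta_i)$-smoothness. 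For the first assertion, the only subtlety is the claim that an infinite completed direct sum of copies of $\Hi_0^{\infty}$ cannot be finitely generated over $\At$, which I would settle via the trace as indicated.
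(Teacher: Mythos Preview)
Your argument is correct and, for the algebra identification, considerably more careful than the paper's. The paper's proof asserts an explicit formula $\|\delta^k(a)\|^2=\sum_i\sum_{\vx}\|A_i\|^{2k}\|\vtau_i\cd\vx\|^{2k}|a(\vx)|^2$ which is not literally right for $\delta=[\,|D|,\cdot\,]$ (it seems to conflate the regularity derivation with the Hopf derivations $\delta_i$ or with $[D,\cdot]$), and then argues equivalence with the Schwartz seminorms from there; the conclusion is correct but the displayed identity is not. Your block analysis $\delta^k(\pi(U_{\vx}))|_{\Hi_{\vmu_0}}=e(\cdots)(L_P-R_Q)^k(\Id)$ with $P=|D(\vmu_0+\vx)|$, $Q=|D(\vmu_0)|$ is the right computation, and your upper and lower bounds on $\|\delta^k(\pi(U_{\vx}))\|$ go through. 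One point to tighten: the phrase ``square root perturbation inequality (its logarithmic factor bounded here \ldots)'' is not quite the mechanism that works, since the Kato--Davies estimate $\||A|-|B|\|\le C\|A-B\|\log(\cdot)$ has a log that grows with $|\vmu_0|$. What actually gives the uniform bound $\||D(\vmu_0+\vx)|-|D(\vmu_0)|\|\le C(1+|\vx|)$ is the integral representation $\sqrt{P}-\sqrt{Q}=\pi^{-1}\int_0^\infty t^{1/2}(P+t)^{-1}(P-Q)(Q+t)^{-1}\,dt$ applied to $P=D(\vmu_0+\vx)^2$, $Q=D(\vmu_0)^2$, together with the lower bound $D(\vmu)^2\ge c|\vmu|^2$ from Lemma~\ref{lem:invertible matrices}; the integral then evaluates to $O(|\vx|)$ uniformly.

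For the first assertion, the paper simply says that since $\Hi$ is already known to be a direct sum of copies of $\Hi_0$, the finiteness axiom forces the sum to be finite. Your compact resolvent argument is a different (and cleaner) route: it invokes Axiom~\ref{ax:D compact resolvent} rather than Axiom~\ref{ax:finiteness}, so strictly speaking it proves a slightly different implication than the lemma states; your second argument via the trace dimension of a finitely generated projective module is the one that matches the lemma's hypothesis and is closer to what the paper has in mind. The ``a fortiori'' linking the two is misplaced, since the compact resolvent condition does not imply the finiteness axiom, but both arguments are valid on their own.
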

\begin{proof}
By the arguments in \cite{MR1303779}*{Lemma III.6.$\alpha$.2}, we know that the intersection of the domain of the $\delta^k$ is stable under the holomorphic functional calculus.
The collection of seminorms $||\delta^k(\cd)||$ is equivalent to the collection of seminorms $q_k= \sup_{\vx\in\tilde{\Z}^n}{(1+|\vx|^2)}^k |a(\vx)|$ because of the following argument.

We can write $||\delta^k(a)||^2 = \sum_i \sum_{\vx\in\Z^n} {||A_i||}^{2k}  {||\vtau_i\cd\vx||}^{2k} {|a(x)|}^2$. Because of Lemmas \ref{lem:hochschild tau} and \ref{lem:invertible matrices}, we know that $\sum_i {||A_i||}^{2k} {||\vtau_i\cd\vx||}^{2k}$ can be bounded between $c_{1,k} {||\vx||}^{2k}$ and $c_{2,k} {||\vx||}^{2k}$ for constants $c_{1,k}, c_{2,k}>0$. Thus the family of seminorms $||\delta^k(a)||$ is equivalent to the family of seminorms $q_k'(a) =\sum_{\vx\in\Z^n} {||\vx||}^{2k} {|a(\vx)|}^2$. Now if an element $a$ of the $C^*$-algebra $A$ has a finite norm in all $q_k'$, then clearly  $\sup_{\vx\in\Z^n}{(1+||\vx||^2)}^k |a(\vx)|^2 < \infty$ for all $k\in\N$. Conversely, if $a\in\At$, we can write
\begin{align*}
 q_k'(a) &= \sum_{\vx\in\Z^n} {||\vx||}^{2k} {|a(\vx)|}^2\\
&= \sum_{\vx\in\Z^n} {(1+||\vx||^2)}^{p} {(1+||\vx||^2)}^{-p} {||\vx||}^{2l} {|a(\vx)|}^2
\end{align*}
and this last sum converges if $p$ is big enough, since ${(1+{||\vx||}^2)}^{p} {|a(\vx)|}^2$ is by assumption less than some finite constant $c_p$ and $\sum_{\vx\in\Z^n} c_p {||\vx||}^{2k}{(1+||\vx||)}^{-p}$ converges when $p> n/2 + k$. 

By Axiom~\ref{ax:finiteness}, we have that $\Hi^{\infty}$ is a finitely generated projective left $\At$ module. 
We already knew that $\Hi$ must be a direct sum of copies of $\Hi_0$ due to the discussion following \eqref{eqn:direct sum}. The finiteness condition then ensures that the sum must be finite. All conditions stated in Axioms~\ref{ax:regularity} and \ref{ax:finiteness} are then easily seen to be fulfilled.
\end{proof}

\begin{remark}
 The choice of smooth structure is not unique even in the commutative case, given just the algebra $C^*$-algebra $A$ of the noncommutative torus. See for example \cite{MR0258044}. However, given the equivariance condition of the Dirac operator, and the assumption that the smooth algebra consists of all elements such that $||\delta^k(\cd)||<\infty$ for all $k\in\N$, the algebra is uniquely determined by the above argument.
\end{remark}

The number of generators of $\Hi$ in terms of $\Hi_0$ is still undetermined, but a lower bound is given by \cite{MR0179183}*{Theorem 1}. This theorem states that for complex Hermitian $k\times k$ matrices, with $k = (2 a + 1)2^b$, there exists at most $2b+1$ matrices satisfying the non-invertibility property of Lemma~\ref{lem:invertible matrices}. So in order to have at least $n$ such matrices, $k$ should be at least $2^{\lfloor n/2\rfloor}$. Hermitian matrices generating an irreducible representation of a Clifford algebra $Cl_{n,0}$ are an example of a set of matrices attaining this lower bound.

\begin{remark}It does not follow from Lemma~\ref{lem:invertible matrices} that the algebra generated by the matrices $A_i$ is a Clifford algebra. What remains to be shown is that ${(\sum_i x_i A_i)}^2$ lies in the center of the bounded operators on $\Hi$ for all $\vx \in \R^n$. Only the condition of Lemma~\ref{lem:invertible matrices} is not enough to show this.

Consider a set of self-adjoint matrices $\{ B_i\}$ generating a Clifford algebra. These satisfy the invertibility condition of Lemma~\ref{lem:invertible matrices}, and the Hochschild condition~\eqref{eqn:gamma matrices relation}. Since they are self-adjoint, we can diagonalize $B_1$ as $B_1 = U \Lambda U^\dagger$ with $\Lambda$ a diagonal matrix with real elements. If we rescale the elements of $\Lambda$ each by a different nonzero amount to $\Lambda'$, the set $\{U \Lambda' U^\dagger\} \cup {\{ B_i\}}_{i\geq 2}$ still has the invertibility property, but not necessarily the Hochschild property, as a calculation for any case $n\geq 4$ will show. If $n=2,3$, the invertibility property does imply the Hochschild property however.
\end{remark}

However, a weaker form of Connes' reconstruction theorem \cite{connes_spectral_2008}*{Theorem 11.5}, implies the following result:
\begin{lemma}\label{lem:clifford action}
 In order for the candidate structure $(\At, \Hi, D, J)$ to satisfy both the Hochschild condition of Axiom~\ref{ax:hochschild cycle} and the dimension condition of Axiom~\ref{ax:dimension}, the matrices $A_i$ of Lemma~\ref{lem:unique structure} must generate a Clifford algebra.
\end{lemma}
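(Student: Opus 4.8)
The plan is to invoke Connes' reconstruction theorem in the following way. We have already established (Lemmas~\ref{lem:unique structure}, \ref{lem:hochschild tau}, \ref{lem:invertible matrices}, \ref{lem:finitely generated module}) that the candidate structure is, after the unitary identification $\Hi \cong \Hi_0 \otimes \C^k$, a spectral triple over the smooth noncommutative torus with $D = \sum_i (\vtau^i\cd\vdelta) A_i + C$, where the $A_i$ are bounded operators on $\C^k$ satisfying the non-invertibility/invertibility property and the Hochschild relation~\eqref{eqn:gamma matrices relation}, and all the remaining axioms (regularity, finiteness, compact resolvent, dimension, orientability, Poincaré duality, and in the even case the grading) hold. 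The key observation is that \emph{all of these axioms are stable under the isospectral deformation $\theta \to 0$}: setting $\theta = 0$ (equivalently $\am = 0$) in the explicit formulas~\eqref{eqn:short form for generators}, \eqref{eqn:special form of D}, \eqref{eqn:special form of J} produces a real spectral triple over the \emph{commutative} torus $C^\infty(\T^n)$ acting on the same Hilbert space $L^2(\T^n)\otimes\C^k$, with the same $D$, $J$ and (if $n$ even) $\Gamma$, and with every axiom again satisfied — because each axiom is expressed in terms of commutators $[D,\pi(a)]$, the operators $|D|^{-n}$, the Hochschild cycle, etc., and these transform in the obvious compatible way under the deformation. (This is precisely the content of the isospectral-deformation construction of \cite{MR2551887}, run backwards.) Thus $\theta = 0$ gives a \emph{commutative} real spectral triple over $C^\infty(\T^n)$ satisfying all of Connes' axioms.

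Next I would apply Connes' reconstruction theorem \cite{connes_spectral_2008}*{Theorem 11.5}: a commutative real spectral triple satisfying the axioms is canonically the spectral triple of a compact spin$^c$ (in fact, given the real structure $J$ and the sign table, spin) Riemannian manifold, and $D$ is a Dirac-type operator for that spin structure. The underlying algebra being $C^\infty(\T^n)$ forces the manifold to be $\T^n$ (it is connected by irreducibility, and its $C^*$-completion is $C(\T^n)$). On $\T^n$ with its flat structure the Dirac operator associated to a spin structure is, in the global trivialization of the (flat) spinor bundle, exactly $\sum_i \gamma^i \partial_i + (\text{zeroth order})$ with the $\gamma^i$ generating $Cl_{n,0}$; comparing with $D = \sum_i (\vtau^i\cd\vdelta)A_i + C$ and using that the $\vtau^i$ are linearly independent (Lemma~\ref{lem:hochschild tau}), a change of basis turns the $A_i$ into the $\gamma^i$. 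Hence the $A_i$ generate a Clifford algebra. Reverting the deformation leaves the operators $A_i$ unchanged, so the same conclusion holds for the original $(\At,\Hi,D,J)$.

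The step I expect to be the main obstacle is \textbf{verifying carefully that the $\theta\to 0$ deformation genuinely preserves every axiom}, so that the reconstruction theorem is legitimately applicable — in particular checking the orientability/Hochschild cycle axiom and Poincaré duality at $\theta = 0$, and checking that the Dirac operator one gets is honestly of Dirac type rather than merely first order with bounded zeroth-order part (the reconstruction theorem handles the former but one must confirm its hypotheses, e.g. that $[D,a]$ has the right principal symbol; this is where the invertibility property of Lemma~\ref{lem:invertible matrices} and the orientation cycle feed in). A secondary point to be careful about is that \cite{connes_spectral_2008}*{Theorem 11.5} is invoked here in a ``weaker form'': we only need the conclusion that $D$ is a \emph{generalized} Dirac operator whose principal symbol defines a Clifford module structure, which already suffices to force the $A_i$ to generate $Cl_{n,0}$ up to the linear change of variables given by the matrix $(\vtau^1\ldots\vtau^n)$ of Lemma~\ref{lem:hochschild tau}. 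Everything else — the passage back to general $\theta$, and the identification of $k$ with $2^{\lfloor n/2\rfloor}$ times the multiplicity — is then routine given Lemma~\ref{lem:finitely generated module} and the Radon–Hurwitz bound of \cite{MR0179183}*{Theorem 1}.
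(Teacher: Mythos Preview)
Your proposal is correct and follows essentially the same approach as the paper: set $\theta = 0$ (noting that the operators $D$, $J$, $\Gamma$ and the verified conditions on $A_i$ are independent of $\theta$), obtain a commutative real spectral triple over $C^\infty(\T^n)$, and invoke Connes' spin manifold/reconstruction theorem to conclude that the $A_i$ generate a Clifford algebra. The paper's proof is considerably more terse---it simply observes that ``none of [the conditions] depend on the antisymmetric matrix $\theta$'' and then cites \cite{bonda_elements_2001}*{Lemma 11.6} and \cite{connes_spectral_2008}*{Remark 5.12}---whereas you spell out more carefully the deformation-stability of each axiom and the principal-symbol comparison, but the underlying idea is identical.
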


\begin{proof}
 If we look at our conditions on $\At$, $\Hi$, $D$ and $J$, we see that none of them depend on the antisymmetric matrix $\theta$. Also, the action of the Dirac operator on the Hilbert space is independent of $\theta$. This means that we can just set $\theta = 0$, where we have the real spectral triple $(\A_0, \Hi, D,J)$ of smooth functions on the $n$-torus. Due to the results of Connes' spin manifold theorem (see for example \cite{bonda_elements_2001}*{Lemma 11.6}, \cite{connes_spectral_2008}*{Remark 5.12}), this implies that the $A_i$ generate a Clifford algebra.
\end{proof}
Because the size of the maximal set of matrices which satisfy the invertibility condition of Lemma~\ref{lem:invertible matrices} is odd, due to \cite{MR0179183}*{Theorem 1}, we have as a corollary:
{\renewcommand\thecorollary{\ref{cor:matrix theorem}}
\begin{corollary}
A set of $2^b\times 2^b$ Hermitian matrices ${\{A_i\}}_{i=1}^n$, where $n=2b +1$, such that the equation
\[\det \l(\sum_i x_i A_i\r) = 0,\]
only has the zero solution ${(x_i =0)}_{i=1}^n$ in $\R^n$ generate a Clifford algebra if and only if
\[ \sum_{\sigma \in S_n} \text{sign}(\sigma) \prod_{i}^n A_{\sigma(i)} = \lambda \Id_k,\]
for some nonzero $\lambda \in \R$.
\end{corollary}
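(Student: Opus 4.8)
I would prove the two implications separately. The forward direction (Clifford generators $\Rightarrow$ antisymmetrized product a scalar) is elementary Clifford algebra, while the reverse direction is essentially Lemma~\ref{lem:clifford action} repackaged as a statement about matrices, with the role of the sizes $k=2^b$, $n=2b+1$ --- namely that these are exactly where Lemma~\ref{lem:invertible matrices} is sharp --- supplied by \cite{MR0179183}*{Theorem~1}.

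\emph{Forward direction.} Suppose the $A_i$ satisfy Clifford relations $A_iA_j+A_jA_i=2g_{ij}\Id$ for a symmetric matrix $g$. Since the $A_i$ are Hermitian, $(\sum_i x_iA_i)^2=g(x,x)\Id$ is positive semidefinite for real $x$, so $g$ is positive semidefinite, and the hypothesis that $\det(\sum_i x_iA_i)$ vanishes only at the origin forces $g$ to be positive definite. Replacing the $A_i$ by suitable real linear combinations --- which multiplies $\sum_\sigma\mathrm{sign}(\sigma)\prod_i A_{\sigma(i)}$ by a nonzero determinant and preserves the property of generating a Clifford algebra --- I may assume $A_iA_j+A_jA_i=2\delta_{ij}\Id$. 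Then the $A_i$ pairwise anticommute and square to $\Id$, so $\prod_i A_{\sigma(i)}=\mathrm{sign}(\sigma)\,A_1\cdots A_n$ for every $\sigma\in S_n$, hence $\sum_\sigma\mathrm{sign}(\sigma)\prod_i A_{\sigma(i)}=n!\,A_1\cdots A_n$. For odd $n=2b+1$ the volume element $\omega:=A_1\cdots A_n$ is central in the complex Clifford algebra, and since any nonzero representation of $\mathbb{C}l_n\cong M_{2^b}(\C)\oplus M_{2^b}(\C)$ has dimension divisible by $2^b$, the representation on $\C^{2^b}$ is irreducible and $\omega$ is a scalar; it is nonzero because $\omega^2=(-1)^{n(n-1)/2}\Id$. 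Thus $\sum_\sigma\mathrm{sign}(\sigma)\prod_i A_{\sigma(i)}=\lambda\Id$ with $\lambda=n!\,\omega\neq0$.

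\emph{Reverse direction.} Assume $\det(\sum_i x_iA_i)\neq0$ for all $x\in\R^n\setminus\{\mathbf{0}\}$ and $\sum_\sigma\mathrm{sign}(\sigma)\prod_i A_{\sigma(i)}=\lambda\Id$ with $\lambda\neq0$. The plan is to build a real spectral triple over the \emph{commutative} $n$-torus --- the noncommutative torus with $\theta=0$ --- whose Dirac operator is $\sum_i\delta_i A_i$: take $\A_0$ with the representation $\pi_0$ of Lemma~\ref{lem:algebra representation}, the Hilbert space $\Hi=\bigoplus_{j=1}^{2^b}\Hi_0$, a reality operator $J$ of the form~\eqref{eqn:special form of J} (at $\theta=0$ one takes $\am=0$ and any constant isometry $\Lambda$ with $\Lambda^2=\pmJJ\Id$), and $D$ of the form~\eqref{eqn:special form of D} with $C=0$ and $\vtau^i=e_i$. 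The hypothesis on $\det(\sum_i x_iA_i)$ is exactly the invertibility condition of Lemma~\ref{lem:invertible matrices}; conversely, since the Hermitian operators $\sum_i x_iA_i$ are then invertible, compactness of the unit sphere gives $c\|x\|\,\Id\leq|\sum_i x_iA_i|\leq C\|x\|\,\Id$ with $0<c\leq C$, so $D$ has finite-dimensional kernel, compact resolvent, and the eigenvalue asymptotics of Axiom~\ref{ax:dimension}; thus Axioms~\ref{ax:D compact resolvent} and~\ref{ax:dimension} hold. By the computation in the proof of Lemma~\ref{lem:hochschild tau}, $\pi_D(\lambda^{-1}\mathbf{c}_n)=\lambda^{-1}\sum_\sigma\mathrm{sign}(\sigma)\prod_i A_{\sigma(i)}=\Id$, so Axiom~\ref{ax:hochschild cycle} holds with the cycle $\lambda^{-1}\mathbf{c}_n$, still a generator of the $n$-th Hochschild homology. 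The remaining axioms hold by the $\theta=0$ case of the verifications in Section~\ref{sec:dimension,finiteness,regularity} (cf.\ Lemma~\ref{lem:finitely generated module}), so $(\A_0,\Hi,D,J)$ is a real spectral triple, and the argument of Lemma~\ref{lem:clifford action} --- setting $\theta=0$ and invoking Connes' reconstruction theorem \cite{connes_spectral_2008}*{Theorem~11.5} --- shows that the $A_i$ generate a Clifford algebra.

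\emph{Main obstacle.} The delicate step is the last one. Lemma~\ref{lem:clifford action} is phrased for the candidate structures produced by the classification in Sections~\ref{sec:reality operator}--\ref{sec:hochschild homology}, so I must make explicit that an \emph{arbitrary} pair of matrices satisfying the two hypotheses already assembles into such a structure over the commutative torus, and in particular that the axioms beyond orientability and dimension --- regularity, finiteness, the first-order condition, and Poincar\'e duality --- are automatic for this model, using nothing about the $A_i$ beyond their being finite Hermitian matrices (so that $[D,\pi_0(a)]$ is bounded and $\Hi^\infty$ is a finitely generated projective $\A_0$-module). Each of these is the specialization to $\theta=0$ of work already present in the paper, but it has to be invoked; this is the only point where more than the two stated matrix hypotheses enters.
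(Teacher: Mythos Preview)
Your approach is the paper's own: the forward implication is the elementary Clifford computation the paper leaves implicit, and the reverse implication is exactly Lemma~\ref{lem:clifford action}, with the invertibility hypothesis supplying Axioms~\ref{ax:D compact resolvent} and~\ref{ax:dimension} and the antisymmetrized-product hypothesis supplying Axiom~\ref{ax:hochschild cycle}, after which Connes' reconstruction theorem on the commutative torus yields the Clifford structure.

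One small slip worth noting: taking ``any constant isometry $\Lambda$ with $\Lambda^2=\pmJJ\Id$'' will not in general give $JD=\pmJD DJ$, since that commutation relation unwinds to $\Lambda\,\overline{A_i}=-\pmJD\,A_i\Lambda$ for each $i$, which is essentially the Clifford-type structure you are trying to establish --- so you cannot assume a compatible $J$ exists at this stage. This does no damage to the argument, however, because the input to the reconstruction theorem (and to the specific consequences \cite{bonda_elements_2001}*{Lemma 11.6}, \cite{connes_spectral_2008}*{Remark 5.12} cited in the proof of Lemma~\ref{lem:clifford action}) is the commutative \emph{spectral triple} $(\A_0,\Hi,D)$ together with orientability, dimension, regularity and finiteness, none of which involve $J$; simply drop the claim that you have assembled a \emph{real} spectral triple and invoke the reconstruction theorem directly.
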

\addtocounter{corollary}{-1}}
If we assume the spectral triple to be irreducible, we need that the matrices generate an irreducible representation of the  Clifford algebra. This restricts the size of the matrices to be exactly $2^{\lfloor n/2 \rfloor}$.

From the results in sections~\ref{sec:hilbert space and algebra}, \ref{sec:reality operator} and \ref{sec:dirac operator} it now follows that the remaining conditions, the Poincar\'e duality of Axiom~\ref{ax:poincare} and the Hermitian pairing of Axiom~\ref{ax:finiteness} are also satisfied, since they are satisfied by isospectral deformations.

This completes the proof of Theorem~\ref{thm:main theorem}:
{\renewcommand\thetheorem{\ref{thm:main theorem}}
\begin{theorem}
All irreducible real spectral triples with an equivariant $n$-torus actions are isospectral deformations of spin structures on an $n$-torus.
\end{theorem}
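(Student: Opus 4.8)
The plan is to assemble the structural results of Sections~\ref{sec:hilbert space and algebra} through \ref{sec:dimension,finiteness,regularity} and verify that, as each axiom is imposed, the only surviving data is a $\theta$-deformation of a flat spin Dirac triple. First I would use Lemma~\ref{lem:algebra representation} to fix the representation of $\At$, up to unitary equivalence, on copies of the basic module $\Hi_0$, and then exploit the direct-sum decomposition \eqref{eqn:direct sum} together with equivariance of $J$ and of $D$: the projection argument following \eqref{eqn:definition D(mu)} shows that irreducibility forces every constituent shifted lattice to coincide with a single $\tilde{\Z}^n = \Z^n + \veps$, with $\veps$ having entries in $\{0,\half\}$. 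This already produces the $2^n$ choices of $\veps$ at the level of the Hilbert space.

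Next I would impose the first-order condition. Lemma~\ref{lem:commutant property} pins down the reality operator up to the phase functions $\vphi_{ij}$, Lemma~\ref{lem:first order Dirac operator} yields the recursion \eqref{eqn:recursion for D}, and Lemma~\ref{lem:unique structure} uses the compact-resolvent axiom (Axiom~\ref{ax:D compact resolvent}) to eliminate every candidate with ${(\Lambda'(\vx){\Lambda(0)}^\dagger)}^2 \neq \Id$; only the linear Dirac operator \eqref{eqn:special form of D} and the constant reality operator \eqref{eqn:special form of J} survive. Then orientability and the grading axiom enter through Lemmas~\ref{lem:only nontrivial cycles} and \ref{lem:hochschild tau}, which, using the one-dimensionality of the $n$-th Hochschild homology of the torus, force the $\vtau^i$ to span $\R^n$ and give the relation \eqref{eqn:gamma matrices relation}. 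The dimension axiom, through Lemma~\ref{lem:invertible matrices}, makes $\sum_i x_i A_i$ invertible away from the origin, and the finiteness axiom, through Lemma~\ref{lem:finitely generated module}, makes $\Hi$ a finite direct sum of copies of $\Hi_0$ and identifies $\At$ with the smooth algebra \eqref{eqn:smooth algebra}.

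The last step, and the one I expect to be the real obstacle, is Lemma~\ref{lem:clifford action}: the invertibility condition of Lemma~\ref{lem:invertible matrices} does not by itself force the Clifford relations for $n \geq 4$ (as the Remark explains), so one cannot finish by elementary matrix theory. Instead I would observe that none of the accumulated conditions on $\At$, $\Hi$, $D$ and $J$, nor the action of $D$ on $\Hi$, depends on $\theta$; setting $\theta = 0$ produces a genuine real spectral triple for the commutative $n$-torus, and Connes' spin manifold / reconstruction theorem then shows the $A_i$ generate a Clifford algebra. Irreducibility upgrades this to an irreducible representation, of size $2^{\lfloor n/2 \rfloor}$, so $(\At,\Hi,D,J)$ is exactly the isospectral deformation of the spin structure on the $n$-torus labelled by $\veps$, carrying the flat metric determined by the $\vtau^i$. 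Finally the remaining axioms — Poincar\'e duality (Axiom~\ref{ax:poincare}) and the Hermitian pairing in Axiom~\ref{ax:finiteness}) — hold because they are preserved under isospectral deformation, which completes the proof.
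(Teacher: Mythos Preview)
Your proposal is correct and follows essentially the same route as the paper: assembling Lemmas~\ref{lem:algebra representation} through~\ref{lem:clifford action} in the order dictated by Sections~\ref{sec:hilbert space and algebra}--\ref{sec:dimension,finiteness,regularity}, with the $\theta=0$ reduction and Connes' reconstruction theorem supplying the Clifford relations, and the remaining axioms verified via isospectral deformation. You have also correctly identified Lemma~\ref{lem:clifford action} as the genuinely nonelementary step for $n\geq 4$.
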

\addtocounter{theorem}{-1}}

\section{Description of the real spectral triples}\label{sec:uniqueness and freedom}
Finally, we show that given a Dirac operator $D$ that satisfies all conditions so far, the reality operator $J$ is uniquely determined, and list all the ingredients that constitute all real spectral triples of the noncommutative $n$-torus. Also, we show in some low dimensional cases what freedom there still exist in the definition of the Dirac operator.

Recall that the Clifford algebra $Cl_{n,0}$ is the algebra over $\R$ generated by $\R^n$, $1$ and a positive definite quadratic form $q$, subject to the relation $v\cdot v = - q(v)\Id$. The Clifford \emph{group} is defined as the group generated by the image of an orthonormal basis of $\R^n$ together with $-1$. 
\begin{lemma}\label{lem:uniqueness of J}
 If $D$ is given by $\sum_{j} \l(\vtau^j \cdot \vdelta\r) A_j$ with $A_j$ representatives of the Clifford group of $Cl_{n,0}$ there is a unique $J$ operator for each $n$, up to multiplication with a complex number of norm $1$.
\end{lemma}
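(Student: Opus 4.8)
The plan is to pin down $J$ by exploiting the two structural constraints that remain after Lemma~\ref{lem:unique structure}: first, that $J$ is forced (up to unitary conjugation) to have the form $Je_{\vmu,j} = e(\vmu\cd\am\vmu)\,\Lambda e_{-\vmu,j}$ with $\Lambda$ a constant isometry satisfying $\Lambda^2 = \pmJJ\Id$; second, that the first-order condition together with equivariance of $D$ forces $\Lambda$ (acting on the multiplicity index $j$) to intertwine the Clifford action in a prescribed way. So the proof splits into an existence part and a uniqueness part.

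\textbf{Existence.} First I would recall that the multiplicity space is $\C^{2^{\lfloor n/2\rfloor}}$, carrying the irreducible representation of $Cl_{n,0}$ generated by the $A_j$. On this space the standard charge conjugation operator $C_0$ of spin geometry (antilinear, with $C_0^2 = \pmJJ\Id$ and $C_0 A_j C_0^{-1} = \pmJD A_j$ according to the relevant $KO$-dimension, exactly the signs in Table~\ref{tab:signs of triple}) exists; this is classical Clifford algebra representation theory (e.g. the construction in \cite{lawson_spin_1989}*{Chapter I} or \cite{bonda_elements_2001}*{Chapter 5}). Setting $\Lambda = C_0$ and checking that $Je_{\vmu,j} = e(\vmu\cd\am\vmu)\Lambda e_{-\vmu,j}$ satisfies the commutant condition (Lemma~\ref{lem:commutant property} with ${\Lambda'(\vx)}^2=\Id$), the $J^2$ sign, $JD=\pmJD DJ$, and the equivariance \eqref{eqn:equivariance J operator} is then a direct verification using the formulas already derived in sections~\ref{sec:reality operator} and \ref{sec:dirac operator}. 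This reproduces the isospectral deformation of the commutative spin structure, so existence is not really in doubt.

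\textbf{Uniqueness.} Suppose $J$ and $J'$ are two reality operators for the same $D$. By Lemma~\ref{lem:unique structure} both are of the form $e(\vmu\cd\am\vmu)\Lambda e_{-\vmu,j}$, $e(\vmu\cd\am\vmu)\Lambda' e_{-\vmu,j}$ with constant isometries $\Lambda,\Lambda'$ on the multiplicity space. Then $V := \Lambda'\Lambda^{-1}$ (or $\Lambda^{-1}\Lambda'$, chosen so the antilinearities cancel) is a \emph{linear} unitary on $\C^{2^{\lfloor n/2\rfloor}}$. The commutation relation $JDJ^{-1} = \pmJD D$, which both $J$ and $J'$ satisfy, combined with the explicit form $D = \sum_j(\vtau^j\cd\vdelta)A_j + C$ and the fact (Lemma~\ref{lem:hochschild tau}, Lemma~\ref{lem:invertible matrices}) that the $\vtau^j$ span $\R^n$ so that the $A_j$ themselves can be recovered from $D$, forces $V$ to commute with every $A_j$, i.e. with the whole irreducible representation of $Cl_{n,0}$. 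By Schur's lemma $V = \lambda\Id$ with $|\lambda|=1$. Unwinding, $J' = \lambda J$ (the antilinearity turns the scalar into $\bar\lambda$ on one side, but the statement is only up to a phase so this is immaterial), which is the claim. I should also note that the freedom in $C$ and in the choice of which Clifford-group representatives $A_j$ are used does not enlarge the set of $J$'s beyond this phase, since those data are held fixed in the hypothesis.

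\textbf{Main obstacle.} The one genuinely delicate point is the step "$V$ commutes with all $A_j$ $\Rightarrow$ $V$ scalar": this uses irreducibility of the Clifford representation, which in turn rests on the reduction to the minimal multiplicity $2^{\lfloor n/2\rfloor}$ established via \cite{MR0179183}*{Theorem 1} and Lemma~\ref{lem:clifford action}. I would make sure the argument is stated for the irreducible case and remark that in the reducible case one gets $J$ unique only up to a unitary in the commutant of the Clifford algebra — which matches the expectation that the interesting classification happens at the irreducible level. A secondary, purely bookkeeping, nuisance is tracking antilinearity carefully when passing between $\Lambda,\Lambda'$ and the linear operator $V$, and checking that the $e(\vmu\cd\am\vmu)$ phases (which are scalar and $\vmu$-dependent but identical for $J$ and $J'$) genuinely cancel; these are routine but easy to get sign-wrong.
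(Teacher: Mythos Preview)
Your proof is correct and takes a genuinely different route from the paper. The paper argues by induction on $n$ in steps of four, using the Clifford-algebra isomorphism $Cl_{n,0}\cong Cl_{n-4,0}\otimes Cl_{4,0}$: existence is obtained by tensoring $J_{n-4}$ with $J_4$ (with a $\Gamma_4$ correction when $n\equiv 1\bmod 4$), and uniqueness by building a projection $P^+=\tfrac14(1+i\gamma^1\gamma^2)(1+i\gamma^3\gamma^4)$ that commutes with the remaining generators and with a twisted version of $J_n$, thereby cutting the multiplicity space down to the $(n-4)$-dimensional situation where the induction hypothesis applies.

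Your argument bypasses this recursion entirely. For existence you invoke the standard charge-conjugation operator on an irreducible Clifford module; for uniqueness you observe that the relation $JD=\pmJD DJ$, together with linear independence of the $\vtau^j$, forces $\Lambda\,\overline{A_k}=-\pmJD A_k\Lambda$ for each $k$, so that $V=\Lambda'\Lambda^{-1}$ is a \emph{linear} unitary commuting with every $A_k$ and hence, by Schur's lemma for the irreducible Clifford representation on $\C^{2^{\lfloor n/2\rfloor}}$, a scalar of modulus one. This is cleaner and more conceptual; the paper's approach, in exchange, is fully self-contained and yields the explicit matrices for $J$ displayed in Remark~\ref{rem:low-dimensional J operators}. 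Your caveat about the reducible case is well taken and matches the paper's standing irreducibility assumption; just make sure, when writing it up, to note that for odd $n$ the complexified Clifford algebra is not simple but the chosen representation still surjects onto a full matrix algebra, so Schur applies without change.
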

\begin{proof}
If $d=1,2,3$ or $4$ this can be done for example by calculations, see Remark~\ref{rem:low-dimensional J operators} below. We proceed by induction. First we prove existence.
Recall that there are isomorphisms $Cl_{n+2,0} \simeq Cl_{0,n} \otimes Cl_{2,0}$ and $Cl_{0,n+2} \simeq Cl_{n,0} \otimes Cl_{0,2}$ \cite{lawson_spin_1989}*{Theorem I.4.1}.
Let $d>4$, and assume it has been proven for $n-4$. The operator $J_{n} = J_{n-4} \otimes J_4$, acting on $Cl_{n,0} \cong Cl_{n-4,0} \otimes Cl_{4,0}$ has precisely the right commutation relations, except for $n \equiv 1\mod 4$, as can easily be calculated by looking at Table~\ref{tab:signs of triple}, and taking into account the periodicity mod $4$ of the table, except for the first row, where we use $J_4^2 = -1$. In case $n \equiv 1\mod 4$, we can achieve the same by setting $J_{n} = J_{n-4} \otimes \Gamma_4 J_4$. 

Now we prove the uniqueness. Write $\gamma^i_n$ for the representation of the $i$-th basis vector of $\R^n$ in $Cl_{n,0}$. An explicit isomorphism $Cl_{n,0} \equiv Cl_{n-4,0} \otimes Cl_{4,0}$ can be chosen, for example 
\[\gamma^i_n = \Id_{n-4} \otimes \gamma^i_4 \text{ for $i\leq4$, } \gamma^i_n = \gamma^{i-4}_{n-4} \otimes \gamma^1_4 \gamma^2_4 \gamma^3_4 \gamma^4_4\text{ for $i>4$.}\] The operators $\gamma^1_n \gamma^2_n$ and $\gamma^3_n \gamma^4_n$ commute with $\gamma^i_n$ for $i>4$ and anticommute with $\gamma^1_n,\gamma^2_n$ and $\gamma^3_n,\gamma^4_n$ respectively. They square to $-1$, so the operator
\[P^+ := \frac{1}{4} \l(1 + i \gamma^1_n \gamma^2_n\r) \l(1 + i \gamma^3_n \gamma^4_n\r),\]
is a projection, which commutes with $\gamma^i_n$ for $i>4$. The projection $P^+$ does not commute with $J_n$, but does commute with $\gamma_2 \gamma_3 J_n$. Also, the projection $P^+$ projects onto a subspace of dimension $1/4$ times the dimension of the irreducible representation of $Cl_{n,0}$, and the operator $\gamma_2 \gamma_3 J_n$ has the same commutation relations with $\gamma^i_n$ for $i>4$ as $J_n$, and since ${(\gamma_2 \gamma_3 J_n)}^2 = -J_n^2$ it has the right signs for an $(n-4)$-dimensional $J$ operator. This means that $P^+$ projects onto a Hilbert space belonging to an $(n-4)$-dimensional spectral triple, where we have a unique $J$ operator by the induction hypothesis. On the complement of the $P^+$ eigenspace, we have the unique $J_4$ operator with the right commutation relations with $\gamma_i$, $i\leq 4$.
\end{proof}

Stated more elaborately, we have the following result:
\begin{theorem}\label{thm:elaborate main theorem}
The following give all $2^n$ irreducible real spectral triples on the smooth noncommutative $n$-torus $\At$:%
 \begin{itemize}
 \begin{subequations}
  \item A Hilbert space $\Hi$ constructed as follows:
  \begin{equation}
   \Hi = \bigoplus_{i}^{2^{\lfloor n/2 \rfloor}} \Hi^i \quad \Hi^i = \bigoplus_{\vm\in \Z^n +\veps} \C,
  \end{equation}
  with $\veps = (\epsilon_1,\ldots,\epsilon_n)\in \R^n$, and $\epsilon_i \in \{0,\half\}$.
  \item An involutive algebra $\A$ with unitary generators $U_{\vx}$ with $\vx \in \Z^n$
  \begin{equation}
   \At := \{ A = \sum_{\vx} a(\vx) U_{\vx} : a \in \mathcal{S}(\Z^n)\},
  \end{equation}
  with $U_{\vx}$ acting on a basis vector $e_{\vmu,i} \in \Hi^i$ by 
  \begin{equation}
   U_{\vx} e_{\vmu,i} = e\l( \half \vx \cd \am\vx + \vx\cd \am \vmu\r) e_{\vmu+\vx,i}\label{eqn:algebra action},
  \end{equation}
   for any matrix $\am$ such that $\am - \am^t = \theta$.
   \item An unbounded, densely defined, self-adjoint first order operator $D$
   \begin{equation}
    D = \sum_{j=1}^n (\vtau^j\cdot \vdelta) A_j + C\label{eqn:dirac operator},
   \end{equation}
    acting on $\Hi$ with $C$ a bounded self-adjoint operator commuting with the algebra satisfying $J C J^{-1} = \pmJD C$, and $\Gamma C = - C\Gamma$ if $n$ is even, for $\vtau^j$ $n$ linearly independent vectors in $\R^n$, $n$ matrices $A_j$ of size $2^{\lfloor n/2\rfloor} \times 2^{\lfloor n/2\rfloor}$  generating an irreducible representation of the Clifford group $Cl_{n,0}$, and $\vdelta$ the derivations $\delta_i e_{\vmu} = \mu_i e_{\vmu}$.
    \item If $n$ is even, the grading operator $\Gamma$ is given by $\sum_{\sigma \in S_n} \text{sign}(\sigma) \prod_i A_{\sigma(i)}$, with  $A_j$ the matrices given above. 
    \item A unique (up to multiplication with a complex number of modulus \(1\)) antilinear isometry $J$ that acts as \begin{equation}
	J e_{\vmu,j} =  e(\vmu \cd \am \vmu) \Lambda e_{-\vmu,j}\label{eqn:reality operator}.
    \end{equation}
  with $\Lambda$ a bounded linear operator such that $\Lambda \Lambda^{\dagger} = \Id$ and $D\Lambda = -\pmJD \Lambda D^*$.
 \end{subequations}
 \end{itemize}
\end{theorem}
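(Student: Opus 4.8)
The plan is to assemble Theorem~\ref{thm:elaborate main theorem} as the synthesis of the structural results proved in Sections~\ref{sec:hilbert space and algebra}--\ref{sec:dimension,finiteness,regularity}, together with Lemma~\ref{lem:uniqueness of J}, and then verify by direct construction that each listed family actually is a real spectral triple satisfying all the Axioms. The first direction --- that every irreducible equivariant real spectral triple has the stated form --- is essentially already in hand: Lemma~\ref{lem:algebra representation} gives the algebra action~\eqref{eqn:algebra action}; the discussion after~\eqref{eqn:direct sum} together with Lemma~\ref{lem:finitely generated module} forces $\Hi$ to be a \emph{finite} direct sum of copies of $\Hi_0$, and the irreducibility constraint after Corollary~\ref{cor:matrix theorem} pins the multiplicity to $2^{\lfloor n/2\rfloor}$; the shift vector $\veps$ with entries in $\{0,\tfrac12\}$ comes from~\eqref{eqn:J maps to negative} combined with the projector argument in Section~\ref{sec:dirac operator}; Lemmas~\ref{lem:unique structure} and~\ref{lem:hochschild tau} give the form~\eqref{eqn:dirac operator} of $D$ with linearly independent $\vtau^j$; Lemma~\ref{lem:clifford action} upgrades the $A_j$ to generators of an (irreducible) Clifford algebra; Lemma~\ref{lem:hochschild tau} also produces $\Gamma$ as the normalized antisymmetrized product; and Lemma~\ref{lem:unique structure}'s corollary plus Lemma~\ref{lem:uniqueness of J} give the form~\eqref{eqn:reality operator} of $J$ and its uniqueness up to a phase. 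So this half is a bookkeeping exercise: I would walk through the axioms one at a time and cite the lemma that disposes of each.

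For the converse --- that each of the $2^n$ listed data genuinely yields a real spectral triple --- the cleanest route is to reduce to $\theta=0$ exactly as in the proof of Lemma~\ref{lem:clifford action}: none of the objects $\Hi$, $D$, $\Gamma$, $J$ depends on $\theta$ (the $\theta$-dependence sits only in the commutation relations of the $U_{\vx}$, not in their action on basis vectors once written through a choice of $\am$), so it suffices to check the axioms for the commutative $n$-torus with the given spin structure $\veps$. But for $\theta=0$ and $A_j$ an irreducible Clifford representation, $(\A_0,\Hi,D,J,\Gamma)$ is manifestly the classical spin Dirac spectral triple of the flat $n$-torus $\R^n/\Lambda$ twisted by the flat spin structure determined by $\veps$, with metric fixed by the lattice $(\vtau^1,\dots,\vtau^n)$ --- and all the Axioms~\ref{ax:D compact resolvent}--\ref{ax:poincare} are known to hold there (Connes' reconstruction, or a direct check for tori). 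Turning $\theta$ back on is an isospectral deformation, which preserves every axiom --- this is the content of~\cite{MR2551887}, and was already noted after Lemma~\ref{lem:clifford action} for Poincar\'e duality and the Hermitian pairing. The perturbation $C$ is harmless: it is bounded and self-adjoint, commutes with $\A$, satisfies $JCJ^{-1}=\pmJD C$ and anticommutes with $\Gamma$, so $D+C$ still has bounded commutators with $\A$, compact resolvent (a bounded perturbation does not change this), the same dimension asymptotics, and the same sign table; for the orientability axiom one uses $[D,U_{\vx}]=[D+C,U_{\vx}]$ since $C$ commutes with the algebra, so $\pi_{D}(\mathbf{c}_n)$ is unaffected.

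The one genuinely non-routine point, and the main obstacle, is the \emph{counting}: showing that the freedom above is exactly $2^n$ spin structures and not more or fewer, i.e.\ that distinct choices of $\veps\in\{0,\tfrac12\}^n$ give inequivalent real spectral triples while all the other apparent freedom (the choice of $\am$ with $\am-\am^t=\theta$, the choice of $\vtau^j$, the choice of irreducible Clifford representation, the phase in $J$, the operator $C$) is either genuine ``metric/gauge'' freedom or is absorbed by unitary equivalence. The choice of $\am$ is killed by Lemma~\ref{lem:algebra representation}; the choice of irreducible Clifford representation is unique up to unitary conjugation since $Cl_{n,0}$ has a unique irreducible representation in the relevant parity (or two, exchanged by $\Gamma\mapsto-\Gamma$, in the odd case, which one identifies as the same spectral triple data); the $\vtau^j$ parametrize the metric, which is part of the geometry, not the spin structure; and the phase in $J$ is explicitly modded out. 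That $\veps$ and $\veps'$ differing in some coordinate give non-isometric (indeed non-unitarily-equivalent as equivariant triples) data follows because the spectrum of $D$ --- the set $\{\,|\sum_i(\vtau^i\cdot\vmu)A_i|:\vmu\in\Z^n+\veps\,\}$ with multiplicities --- depends on $\veps$, since shifting a coordinate of $\veps$ by $\tfrac12$ changes which $\vmu$ are hit; alternatively, and more robustly, one invokes the classical fact (\cite{MR0157388}, \cite{MR860317}) that the $2^n$ flat spin structures on the $n$-torus are pairwise inequivalent and transports this through the deformation. I would state the counting as the final assertion of the theorem and give the spectral/classical-geometry argument for inequivalence, flagging that finer equivalence questions (outer automorphisms, $SL(n,\Z)$) are deferred to Section~\ref{sec:unitary equivalences}.
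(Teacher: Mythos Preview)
Your proposal is correct and matches the paper's approach: Theorem~\ref{thm:elaborate main theorem} is presented in the paper not with a standalone proof but as the explicit restatement of Theorem~\ref{thm:main theorem}, assembled from exactly the chain of lemmas you cite (Lemma~\ref{lem:algebra representation} through Lemma~\ref{lem:clifford action}, plus Lemma~\ref{lem:uniqueness of J}), with the converse direction disposed of in one line by appeal to isospectral deformation just as you do. Your write-up is in fact more explicit than the paper on two points --- the verification that the listed data satisfy all axioms (including the observation that $C$ drops out of $\pi_D(\mathbf{c}_n)$ since it commutes with the algebra), and the discussion of why the count is exactly $2^n$ --- whereas the paper simply asserts the $2^n$ figure from the $\veps$-parametrization and defers all inequivalence questions to Section~\ref{sec:unitary equivalences}; your spectral argument for distinguishing different $\veps$ is a reasonable addition, though note it can fail for non-generic $\vtau^j$, so the fallback to the classical classification is the safer route, as you already flag.
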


In lower dimensional cases we can explicitly calculate what form $C$ in \eqref{eqn:dirac operator} can take. In \cite{paschke_spin_2006}*{Lemma 2.3, Theorem 2.5} it is proven that $C=0$ if $n=2$. For $n=3$ we have the following:

\begin{proposition}\label{prop:uniqueness in low dimensions}
 If $n=3$ the constant matrix $C$ must have the form $q\Id$ where $q\in \R$ arbitrary.
 If $n=4$, the matrix $C$ must have the form \[\begin{pmatrix} 0 & 0 & a & b\\ 0 & 0 & - \bar{b} & \bar{a}\\ \bar{a} & -b & 0 & 0\\ \bar{b} & a & 0 & 0\end{pmatrix},\]
where $a,b\in \C$.
\end{proposition}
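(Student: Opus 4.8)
The plan is to use the two structural constraints that the excerpt has already isolated for the constant operator $C$ in \eqref{eqn:dirac operator}: first, that $C$ commutes with the algebra action \eqref{eqn:algebra action}, hence (by irreducibility of the algebra representation on each $\Hi^i$ and the diagonal form of $\pi_0$) $C$ acts as a fixed $2^{\lfloor n/2\rfloor}\times 2^{\lfloor n/2\rfloor}$ self-adjoint matrix; and second, that $C$ must \emph{anticommute} with the grading operator $\Gamma = \sum_{\sigma\in S_n}\operatorname{sign}(\sigma)\prod_i A_{\sigma(i)}$ when $n$ is even, while for odd $n$ there is no $\Gamma$ but there is instead the compatibility $JCJ^{-1} = \pmJD C$ with the explicit $J$ of \eqref{eqn:reality operator}. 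So the whole proposition is a finite linear-algebra computation once one fixes an explicit irreducible representation of $Cl_{n,0}$ for $n=3$ and $n=4$. I would carry this out as follows.

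For $n=3$: here $2^{\lfloor n/2\rfloor}=2$, so $C$ is a $2\times2$ self-adjoint matrix, and the matrices $A_1,A_2,A_3$ generate an irreducible representation of $Cl_{3,0}$ — concretely one may take $A_j = i\sigma_j$ (or the $\sigma_j$ themselves, up to the convention fixed earlier), the Pauli matrices, which already span all traceless $2\times2$ Hermitian matrices. Since $n=3$ is odd there is no grading condition; the constraint is $JCJ^{-1}=\pmJD C$ with $\pmJD=+1$ for $n\equiv 3\pmod 8$. Writing $C = q_0\Id + \sum_j q_j A_j$ with $q_0,q_j\in\R$, I would impose $JCJ^{-1}=C$ using the explicit action \eqref{eqn:reality operator} of $J$ (which for $n=3$ is the charge-conjugation operator on the spinor space, satisfying $JA_jJ^{-1}=-A_j$ on the matrix part since $\pmJD = +$ forces $[D,J]$-type relations that flip the $A_j$), and conclude that the traceless part must vanish, leaving $C=q\Id$. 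I would double-check consistency against Axiom~\ref{ax:first order condition} and equivariance, which are automatic because $C$ is a scalar.

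For $n=4$: now $2^{\lfloor n/2\rfloor}=4$, $C$ is a $4\times4$ self-adjoint matrix, and I fix the standard irreducible representation of $Cl_{4,0}$, e.g. $A_1=\sigma_1\otimes\sigma_1$, $A_2=\sigma_1\otimes\sigma_2$, $A_3=\sigma_1\otimes\sigma_3$, $A_4=\sigma_2\otimes\Id$ (or any convention matching the paper's sign table), with grading $\Gamma = A_1A_2A_3A_4 = \pm\,\sigma_3\otimes\Id$. The two constraints on $C$ are: $C=C^\dagger$, $\Gamma C = -C\Gamma$ (so $C$ is block-off-diagonal in the $\Gamma$-eigenbasis, i.e. of the form $\left(\begin{smallmatrix}0&M\\ M^\dagger&0\end{smallmatrix}\right)$ with $M$ an arbitrary $2\times2$ complex matrix — four complex parameters), and then $JCJ^{-1}=\pmJD C$ with $\pmJD=+$ for $n\equiv4\pmod 8$; using the explicit $J$ from \eqref{eqn:reality operator} (for $n=4$, $J$ is the quaternionic structure $J_4$, $J_4^2=-\Id$) this last condition cuts the four complex parameters down to the two-complex-parameter family. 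Carrying out the conjugation $J_4 M J_4^{-1}$ on the off-diagonal block and imposing equality yields exactly the displayed matrix $\left(\begin{smallmatrix} 0&0&a&b\\ 0&0&-\bar b&\bar a\\ \bar a&-b&0&0\\ \bar b&a&0&0\end{smallmatrix}\right)$ in the chosen basis.

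The main obstacle is purely bookkeeping: one must be careful that the explicit representation of the $A_j$, the resulting $\Gamma$, and the explicit $J$ all use the \emph{same} sign and ordering conventions as Table~\ref{tab:signs of triple} and as the $J$ constructed in Lemma~\ref{lem:uniqueness of J} (in particular the distinction between the $n\equiv 1$ and the other residues mod $4$, and the twist by $\Gamma_4$), so that the relations $JA_jJ^{-1}=\pm A_j$ used in the reduction are the correct ones; an inconsistent choice of conventions would produce a matrix that is a unitary conjugate of the stated one rather than the stated one itself. Once the conventions are pinned down, both cases reduce to solving a small homogeneous linear system over $\R$ (resp.\ $\C$) for the entries of $C$, which is routine.
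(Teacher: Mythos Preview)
Your proposal is correct and follows essentially the same route as the paper: fix an explicit irreducible representation of the Clifford algebra (Pauli matrices for $n=3$, a standard $4\times4$ representation for $n=4$), read off the explicit matrix $\Lambda$ in $J$ from Lemma~\ref{lem:uniqueness of J}, and then impose the constraints $JCJ^{-1}=\pmJD C$ (together with $\Gamma C=-C\Gamma$ when $n=4$) as a finite linear system on the entries of the self-adjoint matrix $C$. The paper's proof is terser---it simply names the Pauli representation, writes down the $2\times2$ $J$, states the defining equation, and says ``by a calculation'' for $n=3$, and for $n=4$ only names the two conditions $\Gamma D=-D\Gamma$ and $JC=CJ$---but the substance is identical, and your identification of convention-matching as the only real obstacle is exactly right.
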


\begin{proof}
If $n=3$ it follows from Theorem~\ref{thm:elaborate main theorem} that
\[ J = e(\vmu \cd \am \vmu) \sum_{k\in I} a_{jk} e_{-\vmu,k},\]
with $I= \{1,2\}$. We choose a particular form of $D$, given by the particular representation of the Clifford group $Cl_{3,0}$ known as the Pauli matrices. We see that
\[J = e(\vmu \cd \am \vmu) \begin{pmatrix}
                           0 & -1\\
			   1 & 0
                          \end{pmatrix},\]
just as in the $n=2$ case in \cite{paschke_spin_2006}*{Theorem 2.5}. Using the appropriate values for $\pmJJ$ and $\pmJD$ found in Table~\ref{tab:signs of triple}, we see that the defining equation is $J C J = C$ and by a calculation this shows that $C = q \Id$ where $q \in \R$ arbitrary.
Similarly for $n=4$, we can just check what the conditions are for $C$ to satisfy the equations $\Gamma D = - D \Gamma$ and $J C = C J$, and this gives the possibilities given in the proposition. It is trivial to calculate similar conditions for higher dimensions. Due to the increase in the size of matrices, and relaxation of the commutation relation with $\Gamma$ when going from $n=2k$ to $n=2k+1$, the number of parameters will increase when $n$ grows.
\end{proof}

\begin{remark}
If we choose a representation for $Cl_{1,0}$, $Cl_{2,0}$ and $Cl_{0,2}$ all Clifford algebras $Cl_{n,0}$ can be constructed by the basic isomorphisms \cite{lawson_spin_1989}*{Theorem I.4.1}:
\begin{align*}
 Cl_{n,0} \otimes Cl_{0,2} &\cong Cl_{0,n+2},\\
 Cl_{0,n} \otimes Cl_{2,0} &\cong Cl_{n+2,0}.\\
\end{align*}
We choose a representation:
\begin{align*}
Cl_{1,0} &= 1,\\
Cl_{2,0} &= \begin{pmatrix} 0 & i\\ -i & 0\end{pmatrix}, \begin{pmatrix} 1 & 0\\ 0 & -1\end{pmatrix},\\
Cl_{0,2} &= \begin{pmatrix} 0 & -1\\ 1 & 0\end{pmatrix}, \begin{pmatrix} i & 0\\ 0 & -i\end{pmatrix},
\end{align*}
the (unique up to multiplication with a complex number of norm \(1\)) matrix part of the $J$ operator can easily be calculated:%
\begin{subequations}
\begin{align}
 J_2 &= J_3 = \begin{pmatrix} 0 & 1\\ -1 & 0\end{pmatrix}\\
 J_4 &= \begin{pmatrix} 0 & 1 & 0 & 0\\ -1 & 0 & 0 & 0\\ 0 & 0 & 0 & -1\\ 0 & 0 & 1 & 0\end{pmatrix}\\
 J_5 & = \begin{pmatrix} 0 & 0 & -1 &0\\ 0 & 0 & 0 & 1\\ 1 & 0 & 0 & 0\\ 0 & -1 & 0 & 0\end{pmatrix}.
\end{align}
\end{subequations}\label{rem:low-dimensional J operators}
In this representation, we also see that for all $n$, the matrix component of $J$ has precisely one nonzero element in every column or row.
\end{remark}

\section{Unitary equivalences}\label{sec:unitary equivalences}
After establishing that all real spectral triples on the noncommutative $n$-torus are isospectral deformations of spin structures on the commutative $n$-torus, we show that there is a big difference between their groups of symmetries. In the commutative case, a diffeomorphism acting from the torus can transform a spin structure into another according to the action of the diffeomorphism group as calculated in \cite{MR860317}. Here we show that in the noncommutative case when $n=2$ most spin structures, except the isospectral deformation of the trivial spin structure, are equivalent. When $n>2$ the result is less conclusive, due to insufficient knowledge of the full automorphism group of the $C^*$-algebra in that case.

Recall the definition of a unitary equivalence between two spectral triples:
\begin{definition}[Unitary equivalence]
A unitary equivalence between two spectral triples $(\A,H,D,J,(\Gamma))$ and $(\A,H,D',J',(\Gamma'))$ is given by a unitary operator $W$ acting on the Hilbert space $H$ such that%
\begin{subequations}\label{eqn:unitary transformation}
\begin{align}
  W \pi(a) W^{-1} &= \pi(\sigma(a)) \quad\forall a \in \A\label{eqn:action of W on algebra},\\
  W D W^{-1} &= D'\label{eqn:action of W on D},\\
  W J W^{-1} &= J'\label{eqn:action of W on J},\\
W \Gamma W^{-1} &= \Gamma'\label{eqn:action of W on Gamma},
\end{align}
\end{subequations}
where $\sigma$ is a $*$-automorphism of the $C^*$-algebra $A$ such that the algebra $\A$ is mapped into itself.
\end{definition}
We first recall what is known about the automorphisms of the $C^*$-algebra $\Atc$.
The main tool for understanding the automorphism group of a general noncommutative $n$-torus is \cite{MR1488068}*{Theorem I}, which tells us that for $\theta$ in a set which has full measure in the space of all antisymmetric matrices, the algebra $\Atc$ is an inductive limit of direct sums of circle algebras. This results allows one to generalize a lot of results on noncommutative two-tori to higher dimensional tori.

For a noncommutative $n$-torus which is an inductive limit of direct sums of circle algebras, the automorphism group fits in the following exact sequence \cite{elliott_automorphism_1993}*{Theorem 2.1}:
\[1 \rightarrow \overline{\text{Inn}}(\Atc) \rightarrow \text{Aut}(\Atc) \rightarrow \text{Aut}(K(\Atc)) \rightarrow 1,\]
where an automorphism of $K_0 (\Atc) \oplus K_1(\Atc)$ should preserve the order unit $[1_{\Atc}]$ and the order structure ${\l(K_0 (\Atc) \oplus K_1(\Atc)\r)}^+$.

The left side of the exact sequence can be further specified by \cite{elliott_automorphism_1993}*{Corollary 4.6}, which states that for algebras which are inductive limits of direct sums of circle algebras $\overline{\text{Inn}}(\Atc) = \overline{\text{Inn}}_0 (\Atc)$. This means that an inner automorphism of a noncommutative torus can only give a unitary equivalence of two spin structures if the identity automorphism gives a unitary equivalence between the two spin structures.

By \cite{rieffel_projective_1988}*{Theorem 6.1} the order structure ${(K_0(\At))}^+$ consists precisely of those elements for which the normalized trace is positive, and by \cite{elliott_k-theory_1984}*{Theorem 3.1}, the image of this trace on $K_0$ is equal to the range of the exterior exponential of $\theta$:
\[ {\exp}\bigwedge \theta = 1 \oplus \theta \oplus \half (\theta \wedge \theta)\oplus \ldots : \bigwedge^{\text{even}} \Z^n \rightarrow \R.\]
For noncommutative $2$-tori with $\theta$ irrational, this means that all automorphisms of $K(\Att)$ must be the identity on $K_0(\Att) = \Z + \theta\Z$. The $K_1$ groups for noncommutative $n$-tori are also known to be isomorphic to $\Z^{2^{n-1}}$. For $n=2$ a partial lifting of $\text{Aut}(\Z^2) = \text{GL}(2,\Z)$ is known \cite{MR734854}, and given by the action of $\text{SL}(2,\Z)$ on the lattice $\Z^2$ of unitary generators $U_{\vx}$. In \cite{MR859436} it is proven that in fact the whole automorphism group of the algebra $A(\mathbb{T}_\theta^2)$ for irrational $\theta$ with certain extra Diophantine conditions is given by a semidirect product of this action, the canonical torus action: $U_{e_i} \mapsto \lambda_i U_{e_i}$ and the projectivized group of unitaries of $A(\mathbb{T}_\theta^2)$ in the connected component of the identity:
\[ {PU(A(\mathbb{T}_\theta^2))}^0 \rtimes (\mathbb{T}^2 \rtimes \text{SL}(2,\Z)).\]

For $n>2$ the situation is less clear, since the action on the $K_0$ group need not be trivial anymore.
When $n>2$, if $\theta$ is a matrix such that all $\theta_{ij}$ are independent over $\Z$, one can easily calculate that an outer automorphism cannot simply map basic unitaries to other basic unitaries. An automorphism $\sigma$ of this form must satisfy $e(\sigma(\vx)\cd\theta \sigma(\vy)) = e(\vx\cd\theta\vy)$ for all $\vx,\vy\in\Z^n$. If the $\theta_{ij}$ are independent over $\Z$, we see that $\sigma(\vx) = \alpha_{11} \vx + \alpha_{12} \vy$ and $\sigma(\vy) = \alpha_{21} \vx + \alpha_{22} \vy$ with $\begin{pmatrix} \alpha_{11} & \alpha_{12}\\ \alpha_{21} & \alpha_{22} \end{pmatrix} \in \text{SL}(2,\Z)$. When $n>2$ the only solution for all $\vx$ and $\vy$ is $\sigma(\vx) = \pm \vx$ with sign the same for all $\vx$.

With this knowledge of the automorphism group, we can proceed to our main theorem of this section.
{\renewcommand\thetheorem{\ref{thm:unitary inequivalence}}
\begin{theorem} Except for a set of $\theta$ of measure $0$, the different spin structures of the smooth noncommutative $n$-torus $\At$ cannot be unitarily equivalent by an inner automorphism of the algebra.
\end{theorem}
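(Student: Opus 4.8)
The plan is to combine the structure theory of $\textup{Aut}(\Atc)$ with the classification of Theorem~\ref{thm:elaborate main theorem}, in which the spin structures of $\At$ are the $2^{n}$ triples labelled by a lattice shift $\veps\in\{0,\tfrac12\}^{n}$. The whole problem then reduces to showing that the \emph{identity} automorphism of $\Atc$ cannot identify two of these triples, and that last point I would settle by a direct computation with operators in the commutant of the algebra.

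First I would restrict to the $\theta$ for which $\Atc$ is an inductive limit of direct sums of circle algebras; by \cite{MR1488068}*{Theorem~I} the complement is a null set (containing the rational $\theta$), which is the exceptional set in the statement, and for such $\theta$, \cite{elliott_automorphism_1993}*{Corollary~4.6} gives $\overline{\textup{Inn}}(\Atc)=\overline{\textup{Inn}}_{0}(\Atc)$. Suppose an inner automorphism $\sigma=\textup{Ad}(u)$ implements a unitary equivalence $W$ between the triples $\mathcal{T}_{\veps}$ and $\mathcal{T}_{\veps'}$ of Theorem~\ref{thm:elaborate main theorem}. Then $W$ exhibits $\mathcal{T}_{\veps}$ as equivalent \emph{with trivial automorphism} to the pull-back $\sigma^{*}\mathcal{T}_{\veps'}$. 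When $u$ lies in the identity component of the unitary group, conjugating $\mathcal{T}_{\veps'}$ by $\pi(u_{t})$ along a path $u_{t}$ from $\Id$ to $u$ yields a norm-resolvent-continuous family of irreducible equivariant real spectral triples on $\At$; by Theorem~\ref{thm:elaborate main theorem} each of these is one of the $2^{n}$ triples, and since its label in $\{0,\tfrac12\}^{n}$ is discrete it is constant along the family, so $\sigma^{*}\mathcal{T}_{\veps'}$ still carries the label $\veps'$. A further limiting argument (the label being locally constant under point-norm deformation of the representation) extends the conclusion to all $\sigma\in\overline{\textup{Inn}}_{0}$, hence to every inner $\sigma$. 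It remains to show that the identity automorphism forces $\veps=\veps'$.

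Realizing all $2^{n}$ triples on the common Hilbert space $\Hi_{0}\otimes\C^{k}$ ($k=2^{\lfloor n/2\rfloor}$; their algebra representations are mutually equivalent), the Dirac operators become $D_{\veps}=\sum_{i}(\delta_{i}+\epsilon_{i})\otimes B_{i}+\Id\otimes C$ with $\delta_{i}e_{\vm}=m_{i}e_{\vm}$ and $B_{i}$ a linearly independent family built from the Clifford matrices and the frame $\vtau$. Suppose $W$ is unitary with $W\pi(a)W^{-1}=\pi(a)$, $WD_{\veps'}W^{-1}=D_{\veps}$, $WJ_{\veps'}W^{-1}=J_{\veps}$ (and $W\Gamma'W^{-1}=\Gamma$ if $n$ is even). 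Then $W$ is in the commutant of $\pi(\At)$, which is $\mathcal{M}'\otimes M_{k}(\C)$ with $\mathcal{M}'$ the weak closure of the opposite algebra $\At^{o}$ and $M_{k}(\C)$ acting on the multiplicity space of Lemma~\ref{lem:finitely generated module}. Since $W$ commutes with every $\pi(U_{\vx})$ and conjugates $D_{\veps'}$ to $D_{\veps}$, it conjugates $\pi(U_{\vx})^{-1}[D_{\veps'},\pi(U_{\vx})]=\Id\otimes\sum_{i}(\vtau'_{i}\cd\vx)A'_{i}$ to $\Id\otimes\sum_{i}(\vtau_{i}\cd\vx)A_{i}$; as the $\vtau_{i},\vtau'_{i}$ span $\R^{n}$ (Lemma~\ref{lem:hochschild tau}) and the matrices of Lemma~\ref{lem:unique structure} generate $M_{k}(\C)$, this means $W$ normalizes $\Id\otimes M_{k}(\C)$, so $W=r\otimes v$ with $r\in\mathcal{M}'$, $v\in M_{k}(\C)$ unitary, $v$ intertwining the two Clifford frames. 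Then $WD_{\veps'}W^{-1}=D_{\veps}$, after cancelling the matrix factors by $v$, reads $\sum_{i}(r\delta_{i}r^{-1}-\delta_{i})\otimes B_{i}=\Id\otimes\Xi$ for a fixed matrix $\Xi$; matching the independent $B_{i}$-components forces $r\delta_{i}r^{-1}=\delta_{i}+\xi_{i}$ with $\xi_{i}$ a real scalar. Hence $r$ is a joint eigenvector of the gauge derivations of $\mathcal{M}'$, which act on its generators by $\delta_{i}(U^{o}_{\vy})=y_{i}U^{o}_{\vy}$; so $\vxi=(\xi_{i})\in\Z^{n}$ and $r$ is a scalar multiple of a single $U^{o}_{\vp}$, $\vp\in\Z^{n}$. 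Comparing the remaining bounded terms gives $C-vC'v^{-1}=\sum_{i}(\xi_{i}+\epsilon'_{i}-\epsilon_{i})B_{i}$, whose $i$-th coefficient is an integer exactly when $\epsilon_{i}=\epsilon'_{i}$; using the constraints on $C,C'$ from the grading and reality conditions of Theorem~\ref{thm:elaborate main theorem} (one may take $C=C'=0$ in the reference triples) one concludes $\veps-\veps'\in\Z^{n}$, whence $\veps=\veps'$.

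The step I expect to be the real obstacle is the reduction in the second paragraph: making rigorous that the discrete spin-structure label cannot jump along a norm-continuous deformation of the representation — equivalently, that two sufficiently close equivariant real spectral triples on $\At$ lie in the same one of the $2^{n}$ classes — together with the final bookkeeping of the last paragraph, which must rule out that the bounded correction $C$ absorbs a genuinely half-integral lattice shift.
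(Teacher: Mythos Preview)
Your overall architecture matches the paper's: use Boca and Elliott--R{\o}rdam to reduce from inner automorphisms to the identity automorphism, and then rule out a change of $\veps$ for $\sigma=\text{id}$. Where you and the paper diverge is in \emph{how} each of these two steps is carried out, and your version of the first step has the gap you yourself flag.

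For the reduction, the paper simply asserts that $\overline{\text{Inn}}=\overline{\text{Inn}}_0$ implies that an inner automorphism changes the spin structure only if the identity does, and then proceeds. Your attempt to make this explicit via a path $u_t$ of unitaries and a continuity argument does not go through as stated: conjugating an \emph{equivariant} real spectral triple by $\pi(u_t)$ destroys the torus equivariance, so the intermediate triples $\pi(u_t)\mathcal{T}_{\veps'}\pi(u_t)^{-1}$ are not in the list of Theorem~\ref{thm:elaborate main theorem}, and you cannot invoke the discreteness of the label $\veps$ along the family. The limiting argument to pass from $\text{Inn}_0$ to $\overline{\text{Inn}}_0$ has the same problem. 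So the ``real obstacle'' you identify is indeed a genuine gap in your write-up.

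For the identity automorphism itself, your approach is genuinely different from the paper's. The paper does a direct and rather lengthy computation with the reality operator: it expands $WJ_{\veps}e_{\vm,i}$ and $J_{\vepst}We_{\vm,i}$ on basis vectors, matches coefficients, iterates the resulting relation on the $w_{\vk,ij}$, and finally uses that the upper-triangular entries of $\theta$ are independent over $\Z$ (an extra full-measure condition beyond Boca's) to force $\vk+\sigma(\veps)-\vepst=0$. You instead work through the commutant and the Dirac operator: from $[W,\pi(a)]=0$ and the form of $[D,\pi(U_{\vx})]$ you deduce that $W$ normalises $\Id\otimes M_k(\C)$, split $W=r\otimes v$, and read off $\veps-\veps'\in\Z^n$ from the scalar part. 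This is more structural and avoids the long explicit computation, but two points need tightening: normalising $\Id\otimes M_k(\C)$ inside the commutant does not by itself force $W$ to be a simple tensor (you need an argument about the relative commutant or the factor structure of $\pi_0(\At)''$, which again uses a genericity hypothesis on $\theta$); and the step ``$r\delta_i r^{-1}-\delta_i$ is scalar, hence $r$ is a single $U_{\vp}^{o}$'' presupposes that the commutant is exactly the weak closure of $\At^{o}$, which fails for degenerate $\theta$. So your identity-case argument also implicitly relies on an irrationality assumption on $\theta$, just as the paper's does, though via a different mechanism.
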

\addtocounter{theorem}{-1}
}
In the case $n=2$ the theorem was proven in \cite{paschke_spin_2006}*{Theorem 2.5}.

\begin{proof}
While for the proof of Theorem~\ref{thm:unitary inequivalence} it is only necessary to consider inner automorphisms, we get Corollary~\ref{cor:outer automorphism} if we also consider automorphisms induced by an action of $SL(2,Z)$ on the algebra.

By \cite{elliott_automorphism_1993}*{Corollary 4.6} and \cite{MR1488068}*{Theorem I}, we have that for almost all noncommutative tori $\overline{\text{Inn}}(\Atc) = \overline{\text{Inn}}_0(\Atc)$. This means that if an inner automorphism changes the spin structure, than so does the identity automorphism. We will assume in the following that the components of $\theta$ in the upper right corner are independent over $\Z$.
The set of $\theta$ all of whose components in the upper right corner are independent over $\Z$ is of full measure, and so is the set of $\theta$ for which $\overline{\text{Inn}}(\At) = \overline{\text{Inn}}_0(\At)$, so their intersection also has full measure.

We label the basis of the Hilbert space for the different spin structures by the same labels $\vm$, so $\vmu = \vm+\veps$. We see that for a spin structure $\veps$ the operator $J$, written as in equation~\eqref{eqn:special form of J}, acts as
\[ J_{\veps} e_{\vm,i} = \Lambda_{ij} e\l((\vm + \veps)\cdot \am (\vm + \veps)\r) e_{-\vm+2 \veps,j}.\]

We consider a unitary transformation $W$, induced by an automorphism $\sigma\in SL(2,Z)$, and denote
\[W e_{\vo,i} = \sum_{\vk} \sum_j w_{\vk,ij} e_{\vk,j},\]
for the action on the $e_{\vo,i}$, and write $\sigma(\vx)$ for the obvious action of the automorphism $\sigma$ (either an element of $SL(2,\Z)$ when $n=2$, or the action $\vx \mapsto \pm \vx$ when $n>2$) on the vector $\vx \in \Z^n$.

We first show that the action of the unitary transformation $W$ on the Hilbert space is fully determined by the action on the different $e_{\vo,i}$. Next we show that the condition~\eqref{eqn:action of W on J} implies that the action of the unitary transformation on the basis vectors must be such that a basis vector $e_{\vo,i}$ is mapped to a linear combination of vectors $e_{\vk,l}$ with $\vk =\vepst - \sigma(\veps)$, where $\veps$ is the original spin structure and $\vepst$ the new spin structure. If $\sigma(\veps) = \pm \veps$, this means that the spin structure is unchanged, since $\vk$ must lie in $\Z^n$. For the noncommutative $2$-torus, we have $\sigma\in SL(2,\Z)$, and we see that if $\veps \in \Z^2$, the spin structure is unchanged. All spin structures on the noncommutative $2$-torus for which $\veps \notin \Z^2$ are unitarily equivalent.

Consider a unitary transformation $W$ that maps a spin structure $\veps$ to $\vepst$.
Since $e_{\vx,i} = U_{\vx} e(-\vx \cd \am \vx /2 - \vx \cd \am \veps) e_{\vo,i}$ and using \eqref{eqn:action of W on algebra} we can write
\begin{align*}
 W e_{\vm,i} &= W U_{\vm} e(-\vm \cd \am \vm /2 - \vm \cd \am \veps) e_{\vo,i}\notag\\
 &= U_{\sigma(\vm)} e(-\vm \cd \am \vm /2 - \vm \cd \am \veps) \l(\sum_{\vk,j} w_{\vk,ij} e_{\vk,j}\r)\notag\\
&= \sum_{\vk,j} e\l(\sigma(\vm)\cdot \am (\vk + \vepst) + \half \sigma(\vm) \cd \am \sigma(\vm) -\vm \cd \am \vm /2 - \vm \cd \am \veps\r) w_{\vk,ij} e_{\vk+\sigma(\vm),j}.
\end{align*}
So the action of $W$ on the Hilbert space is fully determined by the action on the $e_{\vo,i}$.
Requirement~\eqref{eqn:action of W on J} gives the following equations:
\begin{align*}
W J_{\veps} e_{\vm,i} &= \Lambda_{ij} e\l((\vm+\veps) \cdot \am (\vm+\veps)\r) W e_{-\vm + 2 \veps,j}\\
&= \sum_{\vk,l}\Lambda_{ij} e\l((\vm+\veps) \cdot \am (\vm+\veps) + \sigma(-\vm + 2 \veps) \cd \am (\vk + \vepst + \half\sigma(-\vm +2 \veps))\r)\\
&\cd e\l( - (2 \veps - \vm)\cd \am (\veps + \half (2 \veps -\vm))\r) w_{\vk,jl} e_{\vk + \sigma(-\vm + 2 \veps),l}\displaybreak[0]\\
&= \sum_{\vk,l} \Lambda_{ij} e\l(\veps\cd \am (-3 \veps + 2\vm) + \sigma(\veps)\cd \am (2 (\vepst + \vk) + \sigma(2\veps - \vm))\r)\\
&\cd e\l(\vm \cd \am (3 \veps + \half \vm) + \sigma(\vm)\cd \am(-\vepst - \vk + \sigma(\half \vm - \veps))\r)w_{\vk,jl} e_{\vk + \sigma(-\vm + 2 \veps),l}\displaybreak[1]\\
J_{\vepst} W e_{\vm,i} &= \sum_{\vk,j} e\l(\sigma(\vm)\cd \am (\vk + \vepst +\sigma(\vm)/2 ) - \vm \cd \am (\veps + \vm/2)\r) w_{\vk,ij} e_{\vk + \sigma(\vm),j} \\
&= \sum_{\vk,j} \Lambda_{jl} e\l( (\vk + \sigma(\vm)+\vepst) \cd \am (\vk + \sigma(\vm)+\vepst)\r)\\
&\cdot e\l(-\sigma(\vm)\cd \am (\vk + \vepst +\sigma(\vm)/2 ) + \vm \cd \am (\veps + \vm/2)\r) w_{\vk,ij}^* e_{-\vk - \sigma(\vm)+ 2 \vepst,j}\displaybreak[0]\\
&= \sum_{\vk,j} \Lambda_{jl} e\l( (\vepst + \vk)\cd \am (\vepst + \vk + \sigma(\vm)) + \vm\cd \am (\veps + \vm/2)\r)\\
&\cdot e\l( \half \sigma(\vm)\cd \am \sigma(\vm)\r) w_{\vk,ij}^* e_{-\vk - \sigma(\vm)+ 2 \vepst,j}.
\end{align*}
Collecting the vectors with the same indices and in the same Hilbert space $\Hi^j$ we see that for indices $\vk+\sigma(-\vm+2\veps) = -\vk'-\sigma(\vm)+2\vepst$, or $\vk' = -\vk + 2\vepst-2\sigma(\veps)$:
\begin{align*}
&e\l( \sigma(-\vm + 2 \veps) \cd \am (\vk + \vepst + \half\sigma(-\vm +2 \veps)) - (2 \veps - \vm)\cd \am (\veps + \half (2 \veps -\vm))\r) w_{\vk,jl}\\
&= \Lambda_{jl} e\l( (\vepst + \vk)\cd \am (\vepst + \vk + \sigma(\vm)) + \vm\cd \am (\veps + \vm/2) + \half \sigma(\vm)\cd \am \sigma(\vm)\r) w_{\vk,ij}^*.
\end{align*}
Since $\Lambda^{-1} = \Lambda^\dagger$ and $\sum_j \Lambda_{lj}\Lambda_{ji}^* = \pm \delta_{il}$ with $\delta_{il}$ the Kronecker delta, this implies that
\begin{align}
w_{\vk,ij} =& \pm e\l(2 \veps\cd\am(3 \veps - 2\vm) + 3 \vepst \cd\am(3\vepst -\vk +\sigma(-2\veps + \vm))\r)\notag\\
&\cdot e\l(\vk \cd\am(-3 \vepst + \vk +\sigma(2 \veps -\vm)) + \sigma(\veps)\cd\am(-4 \vepst + 4 \vk + 3 \sigma(2\veps -\vm))\r)\notag\\
&\cdot e\l(\sigma(\vm)\cd \am (\vepst + \vk +\sigma(\veps)) - 2 \vm\cd\am\veps\r)w_{-\vk+2\vepst -2\sigma(\veps),ij}^*.\label{eqn:w first iterate}
\end{align}
Applying this same formula again for $ w_{-k + 2\vepst - 2\sigma(\veps),ij}^*$, we get
\begin{align}
w_{-\vk + 2\vepst-2\sigma(\veps),ij}^* &= e\l( 
            \veps\cd\am(3\veps -2\vm) - (\vepst+\vk)\cd\am(\vepst+\vk+\sigma(\vm)) - \vm\cd\am(4 \veps +\vm)\r)\notag\\  
	    &\cd e\l(\sigma(\veps)\cd\am(6 \vepst -2\vk - \sigma(2\veps -\vm)) + \sigma(\vm)\cd\am(3\vepst - \vk - \sigma(\veps +\vm))\r)
w_{\vk,jk}.\label{eqn:w second iterate}
\end{align}
Filling in the expression of \eqref{eqn:w second iterate} in \eqref{eqn:w first iterate}, we see
\begin{align*}
w_{\vk,jk} &=e\l(2\vepst\cd\am(4\vepst - 2\vk +\sigma(\vm-3\veps)) + 2\vk\cd\am(\sigma(\veps-\vm)-2\vepst)\r)\\
&\cdot e\l( 2 \sigma(\veps)\cd\am(3\vk -5 \vepst +\sigma(4\veps -\vm)) + 2\sigma(\vm)\cd\am(\vk-\vepst+\sigma(\veps))\r)w_{\vk,jk}.
\end{align*}
Collecting all terms which contain $\vm$ we see that these add up to $2 \sigma(\vm) \cd \theta (\vk + \sigma(\veps) - \vepst)$.
 Since the equality above should hold for all $\vm$, this means that either $w_{\vk,ij} = 0$, or $\vm \cd \theta (\vk + \sigma(\veps) -\vepst) = 0$ for all $\vm$.

If all components of $\theta$ in the upper right corner are independent over $\Z$ this can only be the case if $\vk + \sigma(\veps) -\vepst = 0$, hence $\vk = \vepst - \sigma(\veps)$. Since $\vk$ must lie in $\Z^n$, we see that if $\sigma(\veps) = \pm \veps$ the spin structure cannot change.
\end{proof}

From the proof it also follows:
\begin{corollary}\label{cor:outer automorphism}
Let $\sigma\in SL(2,\Z)$. The automorphism $\sigma$ of $\Z^2$ induces an automorphism of the noncommutative $2$-torus of the form $U_{\vx} \mapsto  U_{\sigma(\vx)}$.

This automorphism induces a unitary equivalence of real spectral triples which maps a spin structure $\veps$ to $\vepst = \sigma(\veps)$. In particular, real spectral triples on the noncommutative $2$-torus which are not isospectral deformations of the trivial spin structure on the commutative $2$-torus are unitary equivalent to each other, via the following unitary map $W$:
\begin{align*}
 W e_{\vmu} &= e_{\sigma(\vmu)},\\
 W U_{\vx} W^{-1} &= U_{\sigma(\vx)},\\
 D' &= \l(\sum_j \sigma^{-1}(\vtau_j) \cdot \vdelta\r) \otimes A_j,\\
 J' &= J,
\end{align*}
composed with an additional unitary map given by Lemma~\ref{lem:algebra representation} that maps the representation $\pi^{\am'}$ with $\am' = \sigma^t \am \sigma$ to the original $\pi^{\am}$.
\end{corollary}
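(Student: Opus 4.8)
The statement is essentially a verification distilled from the proof of Theorem~\ref{thm:unitary inequivalence}, now using the whole automorphism group $SL(2,\Z)$ that is available in dimension two; conceptually almost everything follows by transporting the structure along an explicit relabelling unitary. I would organise the argument in four steps.

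\emph{Step 1: $U_\vx\mapsto U_{\sigma(\vx)}$ is a $*$-automorphism of $\Atc$ that preserves $\At$.} When $n=2$ the matrix $\theta$ is $\theta_{12}$ times the standard symplectic form, so every $\sigma\in SL(2,\Z)$ obeys $\sigma^t\theta\sigma=\theta$. Since the commutation relations among the $U_\vx$, and the law expressing $U_\vx$ through $U_{e_1},U_{e_2}$ (see \eqref{eqn:linearity relations}), depend on $\vx$ and $\vy$ only through the bilinear form $\vx\cd\theta\vy$, and $\vx\mapsto\sigma(\vx)$ is an additive bijection of $\Z^2$ respecting $U_\vx^*=U_{-\vx}$, the assignment extends to a $*$-automorphism with inverse $U_\vx\mapsto U_{\sigma^{-1}(\vx)}$; it preserves the Schwartz condition \eqref{eqn:smooth algebra} defining $\At$ because $\sigma$ is a linear isomorphism of $\Z^2$, so $\|\sigma^{\pm1}(\vx)\|$ is comparable to $\|\vx\|$.

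\emph{Step 2: the relabelling unitary.} Put $W_0 e_{\vmu,i}=e_{\sigma(\vmu),i}$. This permutes an orthonormal basis of $\Hi$, hence is unitary, and since $\sigma(\Z^n+\veps)=\Z^n+\sigma(\veps)$ it maps $\Hi_\veps$ onto $\Hi_{\vepst}$, where $\vepst$ is the representative of $\sigma(\veps)$ in $\{0,\half\}^n$ modulo $\Z^n$; thus on the $\Z_2^n$-labels of spin structures $\sigma$ acts through its reduction mod $2$. A direct computation with \eqref{eqn:short form for generators} gives $W_0\pi^\am(U_\vx)W_0^{-1}=\pi^{\am'}(U_{\sigma(\vx)})$ for a matrix $\am'$ in the $\sigma$-congruence class of $\am$ (still $\am'-(\am')^t=\theta$); $W_0$ sends $D=\sum_j(\vtau^j\cd\vdelta)A_j+C$, which acts on $e_{\vmu,i}$ by the matrix $\sum_j(\vtau^j\cd\vmu)A_j+C$, to an operator $D'$ of the same shape with the $\vtau^j$ replaced by a linearly independent family of images (hence $D'$ is again admissible); and $W_0 J_\veps W_0^{-1}$ is again an antilinear isometry of the equivariant form $e_{\vmu,i}\mapsto e(\vmu\cd\am'\vmu)\Lambda e_{-\vmu,j}$ with the \emph{same} matrix part $\Lambda$, because $W_0$ acts trivially on the spinor index $i$, and similarly $W_0$ fixes $\Gamma=\sum_{\tau\in S_n}\text{sign}(\tau)\prod_i A_{\tau(i)}$.

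\emph{Step 3: correcting the representation and concluding.} By Lemma~\ref{lem:algebra representation} there is a unitary $T$, diagonal of scalar type in the basis $e_{\vmu,i}$ and hence commuting with every operator on the spinor factor, intertwining $\pi^{\am'}$ with the original $\pi^\am$. Set $W:=T W_0$. Then $W$ satisfies \eqref{eqn:action of W on algebra} with the automorphism $\sigma$, satisfies \eqref{eqn:action of W on D} with $D'$ (as $T$ commutes with the diagonal matrix action defining $D'$) and \eqref{eqn:action of W on Gamma}, while $W J_\veps W^{-1}$ is an antilinear isometry obeying the Table~\ref{tab:signs of triple} relations with $D'$ and intertwining $\pi^\am$ with its opposite. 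Rather than chase the quadratic phases $e(\vmu\cd\am\vmu)$ by hand I would invoke the uniqueness clause of Theorem~\ref{thm:elaborate main theorem} (equivalently Lemma~\ref{lem:uniqueness of J}): applied to the triple $(\At,\Hi_{\vepst},D')$ it forces $W J_\veps W^{-1}=J_{\vepst}$ up to a scalar of modulus $1$, which we absorb. So $W$ is a unitary equivalence of real spectral triples carrying the spin structure $\veps$ to $\sigma(\veps)$, which is the content of the displayed formulas.

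\emph{Step 4: the orbit statement, and the main obstacle.} Reduction mod $2$ gives a surjection $SL(2,\Z)\to SL(2,\Z/2)=GL(2,\Z/2)\cong S_3$, whose natural action on $\Z_2^2\cong(\half\Z/\Z)^2$ is transitive on the three nonzero elements; hence any two nontrivial spin structures on the noncommutative $2$-torus are related by some $\sigma\in SL(2,\Z)$, and Step~3 produces a unitary equivalence of the associated real spectral triples, while $\veps=0$ is fixed by every $\sigma$ and so the deformation of the trivial spin structure stays in its own orbit, in agreement with Theorem~\ref{thm:unitary inequivalence}. I expect the only genuinely delicate point to be Step~3: the uniqueness argument replaces a long but routine exponential computation, and one must be careful to apply the uniqueness lemma to the \emph{transported} operator $D'$ rather than the untwisted Dirac operator, and to keep the orientation conventions that fix $\am'$ and the $\sigma$-action on the $\vtau^j$ consistent throughout.
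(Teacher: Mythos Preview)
Your proposal is correct and follows the same overall strategy as the paper: build the unitary as the relabelling $W_0:e_{\vmu}\mapsto e_{\sigma(\vmu)}$ followed by the representation-correcting unitary $T$ from Lemma~\ref{lem:algebra representation}, then verify that this intertwines all the data. The paper's own proof is terser---it refers back to the end of the proof of Theorem~\ref{thm:unitary inequivalence} for the spin-structure transformation law, writes down two explicit matrices $M=\begin{pmatrix}1&0\\-1&1\end{pmatrix}$ and $N=\begin{pmatrix}1&-1\\0&1\end{pmatrix}$ to move among the three nontrivial $\veps$, and checks only the $D'$ formula by direct computation---so your version is in fact more complete.

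Two points where your execution differs mildly from the paper, both to your advantage: you replace the explicit matrices by the clean observation that reduction mod~$2$ gives a surjection $SL(2,\Z)\twoheadrightarrow GL(2,\Z/2)\cong S_3$ acting transitively on $(\Z/2)^2\setminus\{0\}$; and you sidestep any phase bookkeeping for $J$ by invoking the uniqueness clause of Lemma~\ref{lem:uniqueness of J}/Theorem~\ref{thm:elaborate main theorem}. The latter is legitimate because $W J_\veps W^{-1}$ is visibly an equivariant antilinear isometry (both $W_0$ and the diagonal $T$ preserve the property of sending $e_{\vmu}$ to a multiple of $e_{-\vmu}$) satisfying the correct sign relations with the transported $D'$ and $\Gamma$, so uniqueness forces it to be $J_{\vepst}$ up to a unimodular scalar. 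Your caveat about applying uniqueness to the \emph{transported} Dirac operator is exactly the right point to flag.
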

\begin{proof}
 The statement about the spin structures follows from the statements at the end of the proof of Theorem~\ref{thm:unitary inequivalence}. Since $\sigma\in SL(2,\Z)$ is an automorphism of the lattice $\Z^2$, it cannot map an $\veps \notin \Z^2$ to one in $\Z^2$ and vice-versa. On the other hand, set $\veps_{10} =  \begin{pmatrix}\half\\ 0\end{pmatrix}, \veps_{01} =  \begin{pmatrix}0\\ \half\end{pmatrix}, \veps_{11} = \begin{pmatrix}\half\\ \half\end{pmatrix}$, and $M = \begin{pmatrix} 1 & 0\\ -1 & 1\end{pmatrix}, N = \begin{pmatrix} 1 & -1\\ 0 & 1\end{pmatrix}$ in $SL(2,\Z)$. We see 
\[ \xymatrix{ & & \veps_{11} \ar@/^/[ddll]^M \ar@/^/[ddrr]^N & & \\ 
 & & & & \\
 \veps_{10} \ar@/^/[uurr]^{M^{-1}} \ar@/^/[rrrr]^{NM^{-1}} & & & & \veps_{01} \ar@/^/[llll]^{MN^{-1}} \ar@/^/[uull]^{N^{-1}}}
\]
where the matrices $M$ and $N$ act by left multiplication, so there exist $\sigma\in SL(2,\Z)$ such that the $\veps\notin \Z^2$ are mapped to one another.
 Given a unitary operator $W e_{\vmu} = e_{\sigma(\vmu)}$, we can calculate its action on the Dirac operator by using the definition \eqref{eqn:action of W on D}:
\begin{align*}
 D' e_{\vmu,k} &= W D W^{-1} e_{\vmu,k}\\
 &= W D e_{\sigma^{-1} (\vmu),k}\\
 &= W \sum_j \l(\vtau_j \cd (\sigma^{-1}(\vmu))\r) \otimes A_j e_{\sigma^{-1}(\vmu),k}\\
 &= \sum_j \l(\sigma^{-1}(\vtau_j) \cd \vmu\r) \otimes A_j e_{\vmu,k}.
\end{align*}
The action of the automorphisms $\sigma \in \text{SL}(2,\Z)$ on the Dirac operator when $\veps = 0$ was also determined for $\veps = 0$ in \cite{vrilly_introduction_2006}*{Section 7.1}. with the change in notation that our $\vtau_1$ is given there by $\begin{pmatrix} 0\\ \text{Im} \tau\end{pmatrix}$ and our $\vtau_2$ is there $\begin{pmatrix} 1\\ \text{Re} \tau\end{pmatrix}$. 
\end{proof}

Some questions left unanswered by these results are the effects of the outer automorphisms for $n>2$ and Morita equivalences of noncommutative tori, as described in \cite{rieffel_morita_1999} and \cite{elliott_morita_2007}, on the spin structure. Also, the definition of an equivariant spectral triple can be generalized to allow $[D,h]\neq 0$, but bounded. This was used to construct equivariant $\mathcal{U}_q(SU(2))$ spectral triples in \cite{MR2308305}. It would be interesting to investigate what possibilities this would open up for the noncommutative torus. 
We hope to return to these questions in the future.

\def\polhk#1{\setbox0=\hbox{#1}{\ooalign{\hidewidth
  \lower1.5ex\hbox{`}\hidewidth\crcr\unhbox0}}}
\begin{bibdiv}
\begin{biblist}

\bib{MR0139178}{article}{
      author={Adams, J.~Frank},
       title={Vector fields on spheres},
        date={1962},
        ISSN={0003-486X},
     journal={Ann. of Math. (2)},
      volume={75},
       pages={603\ndash 632},
      review={\MR{0139178 (25 \#2614)}},
}

\bib{MR0179183}{article}{
      author={Adams, J.~Frank},
      author={Lax, Peter~D.},
      author={Phillips, Ralph~S.},
       title={On matrices whose real linear combinations are non-singular},
        date={1965},
        ISSN={0002-9939},
     journal={Proc. Amer. Math. Soc.},
      volume={16},
       pages={318\ndash 322},
      review={\MR{0179183 (31 \#3432)}},
}

\bib{1126-6708-2007-02-033}{article}{
      author={Aoki, Hajime},
      author={Nishimura, Jun},
      author={Susaki, Yoshiaki},
       title={The index of the overlap {D}irac operator on a discretized 2d
  non-commutative torus},
        date={2007},
     journal={Journal of High Energy Physics},
      volume={2007},
      number={02},
       pages={033},
         url={http://stacks.iop.org/1126-6708/2007/i=02/a=033},
}

\bib{bellissard:5373}{article}{
      author={Bellissard, Jean},
      author={van Elst, Andreas},
      author={Schulz-Baldes, Hermann},
       title={The noncommutative geometry of the quantum {H}all effect},
        date={1994},
     journal={Journal of Mathematical Physics},
      volume={35},
      number={10},
       pages={5373\ndash 5451},
         url={http://link.aip.org/link/?JMP/35/5373/1},
}

\bib{MR1488068}{article}{
      author={Boca, Florin~P.},
       title={The structure of higher-dimensional noncommutative tori and
  metric {D}iophantine approximation},
        date={1997},
        ISSN={0075-4102},
     journal={J. Reine Angew. Math.},
      volume={492},
       pages={179\ndash 219},
         url={http://dx.doi.org/10.1515/crll.1997.492.179},
      review={\MR{1488068 (98k:46096)}},
}

\bib{MR734854}{article}{
      author={Brenken, Berndt~A.},
       title={Representations and automorphisms of the irrational rotation
  algebra},
        date={1984},
        ISSN={0030-8730},
     journal={Pacific J. Math.},
      volume={111},
      number={2},
       pages={257\ndash 282},
         url={http://projecteuclid.org/getRecord?id=euclid.pjm/1102710569},
      review={\MR{734854 (86a:46089)}},
}

\bib{MR1303779}{book}{
      author={Connes, Alain},
       title={Noncommutative geometry},
   publisher={Academic Press Inc.},
     address={San Diego, CA},
        date={1994},
        ISBN={0-12-185860-X},
      review={\MR{1303779 (95j:46063)}},
}

\bib{connes_noncommutative_1995}{article}{
      author={Connes, Alain},
       title={Noncommutative geometry and reality},
        date={1995},
        ISSN={0022-2488},
     journal={J. Math. Phys.},
      volume={36},
      number={11},
       pages={6194\ndash 6231},
         url={http://dx.doi.org/10.1063/1.531241},
      review={\MR{MR1355905 (96g:58014)}},
}

\bib{connes_gravity_1996}{article}{
      author={Connes, Alain},
       title={Gravity coupled with matter and the foundation of non-commutative
  geometry},
        date={1996},
        ISSN={0010-3616},
     journal={Comm. Math. Phys.},
      volume={182},
      number={1},
       pages={155\ndash 176},
         url={http://projecteuclid.org/getRecord?id=euclid.cmp/1104288023},
      review={\MR{MR1441908 (98f:58024)}},
}

\bib{connes_spectral_2008}{article}{
      author={Connes, Alain},
       title={On the spectral characterization of manifolds},
        date={2008-10},
      eprint={math.OA:0810.2088},
         url={http://arxiv.org/abs/0810.2088},
        note={\url{http://arxiv.org/abs/0810.2088}},
}

\bib{MR1937657}{article}{
      author={Connes, Alain},
      author={Dubois-Violette, Michel},
       title={Noncommutative finite-dimensional manifolds. {I}. {S}pherical
  manifolds and related examples},
        date={2002},
        ISSN={0010-3616},
     journal={Comm. Math. Phys.},
      volume={230},
      number={3},
       pages={539\ndash 579},
         url={http://dx.doi.org/10.1007/s00220-002-0715-2},
      review={\MR{1937657 (2004a:58006)}},
}

\bib{MR2371808}{book}{
      author={Connes, Alain},
      author={Marcolli, Matilde},
       title={Noncommutative geometry, quantum fields and motives},
      series={American Mathematical Society Colloquium Publications},
   publisher={American Mathematical Society},
     address={Providence, RI},
        date={2008},
      volume={55},
        ISBN={978-0-8218-4210-2},
      review={\MR{2371808 (2009b:58015)}},
}

\bib{MR2551887}{article}{
      author={D{\c{a}}browski, Ludwik},
       title={Spinors and theta deformations},
        date={2009},
        ISSN={1061-9208},
     journal={Russ. J. Math. Phys.},
      volume={16},
      number={3},
       pages={404\ndash 408},
         url={http://dx.doi.org/10.1134/S106192080903008X},
      review={\MR{2551887 (2011b:58060)}},
}

\bib{MR2308305}{article}{
      author={D{\c{a}}browski, Ludwik},
      author={D'Andrea, Francesco},
      author={Landi, Giovanni},
      author={Wagner, Elmar},
       title={Dirac operators on all {P}odle\'s quantum spheres},
        date={2007},
        ISSN={1661-6952},
     journal={J. Noncommut. Geom.},
      volume={1},
      number={2},
       pages={213\ndash 239},
      review={\MR{2308305 (2008h:58051)}},
}

\bib{MR860317}{article}{
      author={D{\c{a}}browski, Ludwik},
      author={Percacci, Roberto},
       title={Spinors and diffeomorphisms},
        date={1986},
        ISSN={0010-3616},
     journal={Comm. Math. Phys.},
      volume={106},
      number={4},
       pages={691\ndash 704},
         url={http://projecteuclid.org/getRecord?id=euclid.cmp/1104115859},
      review={\MR{860317 (88d:81054)}},
}

\bib{MR781813}{article}{
      author={Disney, Shaun},
      author={Elliott, George~A.},
      author={Kumjian, Alexander},
      author={Raeburn, Iain},
       title={On the classification of noncommutative tori},
        date={1985},
        ISSN={0706-1994},
     journal={C. R. Math. Rep. Acad. Sci. Canada},
      volume={7},
      number={2},
       pages={137\ndash 141},
      review={\MR{781813 (86j:46064a)}},
}

\bib{MR0227310}{book}{
      author={Effros, Edward~G.},
      author={Hahn, Frank},
       title={Locally compact transformation groups and {$C^{\ast} $}-
  algebras},
      series={Memoirs of the American Mathematical Society, No. 75},
   publisher={American Mathematical Society},
     address={Providence, R.I.},
        date={1967},
      review={\MR{0227310 (37 \#2895)}},
}

\bib{elliott_k-theory_1984}{incollection}{
      author={Elliott, George~A.},
       title={On the {$K$}-theory of the {$C^{\ast} $}-algebra generated by a
  projective representation of a torsion-free discrete abelian group},
        date={1984},
   booktitle={Operator algebras and group representations, {V}ol. {I}
  ({N}eptun, 1980)},
      series={Monogr. Stud. Math.},
      volume={17},
   publisher={Pitman},
     address={Boston, MA},
       pages={157\ndash 184},
      review={\MR{MR731772 (85m:46067)}},
}

\bib{MR859436}{article}{
      author={Elliott, George~A.},
       title={The diffeomorphism group of the irrational rotation
  {$C^\ast$}-algebra},
        date={1986},
        ISSN={0706-1994},
     journal={C. R. Math. Rep. Acad. Sci. Canada},
      volume={8},
      number={5},
       pages={329\ndash 334},
      review={\MR{859436 (87m:46122)}},
}

\bib{elliott_morita_2007}{article}{
      author={Elliott, George~A.},
      author={Li, Hanfeng},
       title={Morita equivalence of smooth noncommutative tori},
        date={2007},
        ISSN={0001-5962},
     journal={Acta Math.},
      volume={199},
      number={1},
       pages={1\ndash 27},
         url={http://dx.doi.org/10.1007/s11511-007-0017-9},
      review={\MR{2350069 (2008k:58023)}},
}

\bib{elliott_automorphism_1993}{article}{
      author={Elliott, George~A.},
      author={R{\o}rdam, Mikael},
       title={The automorphism group of the irrational rotation
  {$C^*$}-algebra},
        date={1993},
        ISSN={0010-3616},
     journal={Comm. Math. Phys.},
      volume={155},
      number={1},
       pages={3\ndash 26},
         url={http://projecteuclid.org/getRecord?id=euclid.cmp/1104253197},
      review={\MR{1228523 (94j:46059)}},
}

\bib{bonda_elements_2001}{book}{
      author={Gracia-Bond{\'{\i}}a, Jos{\'e}~M.},
      author={V{\'a}rilly, Joseph~C.},
      author={Figueroa, H{\'e}ctor},
       title={Elements of noncommutative geometry},
      series={Birkh\"auser Advanced Texts: Basel Textbooks},
   publisher={Birkh\"auser Boston Inc.},
     address={Boston, MA},
        date={2001},
        ISBN={0-8176-4124-6},
      review={\MR{MR1789831 (2001h:58038)}},
}

\bib{MR0258044}{article}{
      author={Hsiang, Wu~Chung},
      author={Wall, C. Terry~C.},
       title={On homotopy tori. {II}},
        date={1969},
        ISSN={0024-6093},
     journal={Bull. London Math. Soc.},
      volume={1},
       pages={341\ndash 342},
      review={\MR{0258044 (41 \#2691)}},
}

\bib{MR1321145}{book}{
      author={Kassel, Christian},
       title={Quantum groups},
      series={Graduate Texts in Mathematics},
   publisher={Springer-Verlag},
     address={New York},
        date={1995},
      volume={155},
        ISBN={0-387-94370-6},
      review={\MR{1321145 (96e:17041)}},
}

\bib{MR1721895}{article}{
      author={Landi, Giovanni},
      author={Lizzi, Fedele},
      author={Szabo, Richard~J.},
       title={String geometry and the noncommutative torus},
        date={1999},
        ISSN={0010-3616},
     journal={Comm. Math. Phys.},
      volume={206},
      number={3},
       pages={603\ndash 637},
         url={http://dx.doi.org/10.1007/s002200050839},
      review={\MR{1721895 (2001c:81212)}},
}

\bib{lawson_spin_1989}{book}{
      author={Lawson, H.~Blaine, Jr.},
      author={Michelsohn, Marie-Louise},
       title={Spin geometry},
      series={Princeton Mathematical Series},
   publisher={Princeton University Press},
     address={Princeton, NJ},
        date={1989},
      volume={38},
        ISBN={0-691-08542-0},
      review={\MR{MR1031992 (91g:53001)}},
}

\bib{MR0157388}{article}{
      author={Milnor, John~W.},
       title={Spin structures on manifolds},
        date={1963},
        ISSN={0013-8584},
     journal={Enseignement Math. (2)},
      volume={9},
       pages={198\ndash 203},
      review={\MR{0157388 (28 \#622)}},
}

\bib{paschke_spin_2006}{article}{
      author={Paschke, Mario},
      author={Sitarz, Andrzej},
       title={On {S}pin structures and {D}irac operators on the noncommutative
  torus},
        date={2006},
        ISSN={0377-9017},
     journal={Lett. Math. Phys.},
      volume={77},
      number={3},
       pages={317\ndash 327},
         url={http://dx.doi.org/10.1007/s11005-006-0094-2},
      review={\MR{MR2260377 (2007e:58010)}},
}

\bib{reed_methods_1978}{book}{
      author={Reed, Michael},
      author={Simon, Barry},
       title={Methods of modern mathematical physics. {IV}. {A}nalysis of
  operators},
   publisher={Academic Press [Harcourt Brace Jovanovich Publishers]},
     address={New York},
        date={1978},
        ISBN={0-12-585004-2},
      review={\MR{0493421 (58 \#12429c)}},
}

\bib{rieffel_projective_1988}{article}{
      author={Rieffel, Marc~A.},
       title={Projective modules over higher-dimensional noncommutative tori},
        date={1988},
        ISSN={{0008-414X}},
     journal={Canadian Journal of Mathematics. Journal Canadien de
  Math\'ematiques},
      volume={40},
      number={2},
       pages={257–338},
}

\bib{MR1047281}{incollection}{
      author={Rieffel, Marc~A.},
       title={Noncommutative tori---a case study of noncommutative
  differentiable manifolds},
        date={1990},
   booktitle={Geometric and topological invariants of elliptic operators
  ({B}runswick, {ME}, 1988)},
      series={Contemp. Math.},
      volume={105},
   publisher={Amer. Math. Soc.},
     address={Providence, RI},
       pages={191\ndash 211},
      review={\MR{1047281 (91d:58012)}},
}

\bib{rieffel_morita_1999}{article}{
      author={Rieffel, Marc~A.},
      author={Schwarz, Albert},
       title={Morita equivalence of multidimensional noncommutative tori},
        date={1999},
        ISSN={0129-167X},
     journal={Internat. J. Math.},
      volume={10},
      number={2},
       pages={289\ndash 299},
         url={http://dx.doi.org/10.1142/S0129167X99000100},
      review={\MR{1687145 (2000c:46135)}},
}

\bib{schmidt_diophantine_1991}{book}{
      author={Schmidt, Wolfgang~M.},
       title={Diophantine approximations and {D}iophantine equations},
      series={Lecture Notes in Mathematics},
   publisher={Springer-Verlag},
     address={Berlin},
        date={1991},
      volume={1467},
        ISBN={3-540-54058-X},
      review={\MR{MR1176315 (94f:11059)}},
}

\bib{sitarz_equivariant_2003}{incollection}{
      author={Sitarz, Andrzej},
       title={Equivariant spectral triples},
        date={2003},
   booktitle={Noncommutative geometry and quantum groups ({W}arsaw, 2001)},
      series={Banach Center Publ.},
      volume={61},
   publisher={Polish Acad. Sci.},
     address={Warsaw},
       pages={231\ndash 263},
         url={http://dx.doi.org/10.4064/bc61-0-16},
      review={\MR{2024433 (2005g:58058)}},
}

\bib{MR0270168}{book}{
      author={Takesaki, Masamichi},
       title={Tomita's theory of modular {H}ilbert algebras and its
  applications},
      series={Lecture Notes in Mathematics, Vol. 128},
   publisher={Springer-Verlag},
     address={Berlin},
        date={1970},
      review={\MR{MR0270168 (42 \#5061)}},
}

\bib{MR548728}{book}{
      author={Takesaki, Masamichi},
       title={Theory of operator algebras. {I}},
   publisher={Springer-Verlag},
     address={New York},
        date={1979},
        ISBN={0-387-90391-7},
      review={\MR{548728 (81e:46038)}},
}

\bib{vrilly_introduction_2006}{book}{
      author={V{\'a}rilly, Joseph~C.},
       title={An introduction to noncommutative geometry},
      series={EMS Series of Lectures in Mathematics},
   publisher={European Mathematical Society (EMS), Z\"urich},
        date={2006},
        ISBN={978-3-03719-024-1; 3-03719-024-8},
         url={http://dx.doi.org/10.4171/024},
      review={\MR{MR2239597 (2007e:58011)}},
}

\bib{wambst_hochschild_1997}{article}{
      author={Wambst, Marc},
       title={Hochschild and cyclic homology of the quantum multiparametric
  torus},
        date={1997},
        ISSN={0022-4049},
     journal={J. Pure Appl. Algebra},
      volume={114},
      number={3},
       pages={321\ndash 329},
         url={http://dx.doi.org/10.1016/S0022-4049(95)00169-7},
      review={\MR{MR1426492 (98c:16008)}},
}

\end{biblist}
\end{bibdiv}

\end{document}